\theoremstyle{plain}
\newtheorem{thm}{Theorem}[section]
\newtheorem{prop}[thm]{Proposition}
\newtheorem{lemma}[thm]{Lemma}
\newtheorem{cor}[thm]{Corollary}
\newtheorem{conjecture}[thm]{Conjecture}
\theoremstyle{definition}
\newtheorem{defi}[thm]{Definition}
\newtheorem{notation}[thm]{Notation}
\newtheorem{ex}[thm]{Example}
\theoremstyle{remark}
\newtheorem{remark}[thm]{Remark}
\newcommand{\R}{\mathbb {R}}
\newcommand{\Q}{\mathbb {Q}}
\newcommand{\N}{\mathbb {N}}
\newcommand{\Z}{\mathbb {Z}}
\newcommand{\vast}{\bBigg@{4}}
\newcommand{\Vast}{\bBigg@{5}}
\definecolor{Blue}{rgb}{0,0,0.7}
\def\iddots{\mathinner{\mkern1mu\raise1pt
    \hbox{.}\mkern2mu\raise4pt\hbox{.}\mkern2mu
        \raise7pt\vbox{\kern7pt\hbox{.}}\mkern1mu}}
\def\Vol{{\rm Vol}}
\DeclareMathOperator{\vol}{Vol}
\DeclareMathOperator{\diag}{diag}
\DeclareMathOperator{\disp}{disp}
\DeclareMathOperator{\disc}{disc}
\DeclareMathOperator{\GL}{GL}
\def\jaspar#1{\fbox {\footnote {\ }}\ \footnotetext { From Jaspar: #1}}
\def\hjaspar#1{}
\def\hthomas#1 {}
\begin{document}

\title{On the area of empty axis-parallel rectangles amidst 2-dimensional lattice points}

\author{Thomas Lachmann and Jaspar Wiart\thanks{The authors are supported by the Austrian Science Fund (FWF),
Project F5505-N26 and F5509-N26, which are part of the Special Research Program ``Quasi-Monte Carlo Methods: Theory and Applications''.}}

% \author{Jaspar Wiart}

% \author{Ralph Kritzinger and Jaspar Wiart\thanks{The authors are supported by the Austrian Science Fund (FWF),
% Project F5509-N26, which is part of the Special Research Program ``Quasi-Monte Carlo Methods: Theory and Applications''.}}

% \thanks{The authors are supported by the Austrian Science Fund (FWF),
% Project F5505-N26 and F5509-N26, which are part of the Special Research Program ``Quasi-Monte Carlo Methods: Theory and Applications''.}

\affil{Johannes Kepler University \authorcr Altenbergerstr.\ 69 \authorcr
4040 Linz, Austria \authorcr thomas.lachmann@jku.at\\jaspar.wiart@jku.at}

%  \affil[2]{Johannes Kepler University \authorcr Altenbergerstr.\ 69 \authorcr
% 4040 Linz, Austria \authorcr  jaspar.wiart@jku.at}
 \date{}
\maketitle

\begin{abstract}
The dispersion of a point set in the unit square is defined to be the area of the largest empty axis-parallel box. In this paper we are interested in the dispersion of lattices in the plane, that is, the supremum of the area of the empty axis-parallel boxes amidst the lattice points. We introduce a framework with which to study this based on the continued fractions expansions of the generators of the lattice. This framework proves so successful that we were unable to ask a question that we could not answer. We give necessary and sufficient conditions under which a lattice has finite dispersion. We obtain an exact formula for the dispersion of the lattices associated to subgroups of the ring of integer of a quadratic field.  We have tight bounds for the dispersion of a lattice based the largest continued fraction coefficient of the generators, accurate to within one half. We know what the $n$-th best lattice is. We provide an equivalent formulation of Zaremba's conjecture. Using our framework we are able to give alternative proofs of the results from two other papers in only a few lines. 
%This paper answers every question you ever had about the area of the largest empty axis-parallel box amidst 2-dimensional lattice points, also known as dispersion. A necessary and sufficient condition for a lattice to have finite dispersion? Got it! An exact formula for the dispersion of a wide class of lattices? Got that too! Sharp bounds to within 1/2 for all other lattices? Believe it! A formula for the $n$-th best lattice? Does a duck like water?
\end{abstract}

%%%%%%%%%%%%%%%%%%%%%%%%%%%%%%%%%%%%%%%%%%%%%%%%%%%%%%%%%%%%%%%%%%%%%%%%%%%

\section{Introduction and preliminaries}

Let $P\subset[0,1]^s$ be a point set of size $n$. The dispersion of $P$, denoted $\disp(P)$, is the volume of the largest empty axis-parallel (open) box amidst the points of $P$. Of interest is the dispersion of the optimal point set of size $n$ in $[0,1]^s$ which we denote by
\[
\disp(n,s)=\min\{\disp(P_n):P_n\subseteq[0,1]^s,\, |P_n|=n\}.
\]
The question of finding optimal dispersion of point sets have been studied for a long while, including allowing for shapes other than axis-parallel boxes, such as in \cite{BreneisHinrichs,Fiat1989HowTF,KRIEG2018115,kritzinger2020dispersion,niederreiter1984measure,ULLRICH2019385}.
It is known that $\disp(n,s)= \Omega(s/n)$ and $\disp(n,s)= O(s^2\log(s)/n)$, with the latter being attained by a modified Hilton-Hammerseley construction in \cite{Bukh}. There they also conjecture that the actual order of $\disp(n,s)$ is in $\Theta(s\log(s)/n)$. We will be restricting ourselves to 2-dimensional dispersion. Because the minimal dispersion is of order $n^{-1}$ for all $s$, the asymptotic behaviour of the minimal dispersion normalized by $n$ is interesting.   In two dimensions it is known that
\[
1.50476\leq \liminf_{n\to\infty}n\disp(n,2) \leq1.89442...
\]
The upper bound (which is conjectured to be optimal) is attained by a modified Fibonacci lattice in \cite{Wiart} and the lower bound was proved in \cite{Bukh}. 

Although the majority of this paper will focus on dispersion, we will spend some time with periodic dispersion, denoted $\disp_{\mathbb T}(P)$. The difference from normal dispersion is that we allow boxes to wrap around, e.g.\ if $x_1>y_1$ and $x_2<y_2$, then $(x_1,y_1)\times(x_2,y_2)$ denotes the set $\big((0,x_1)\cup(y_1,1)\big)\times(x_2,y_2)$. In two dimensions it is known that $\disp_{\mathbb T}(n,2)\geq 2/n$. It was shown in \cite{BreneisHinrichs} that the only rank-1 lattices (see Section \ref{sec:Periodic Disperson}) that have dispersion $2/n$ are Fibonacci lattices (see \eqref{eq:Fibonacci Lattice}). 

Let $\Lambda\subset \R^s$ be a lattice generated by the matrix $A$. The dispersion of $\Lambda$, denoted $\disp(\Lambda)$ is the supremum over the volumes of all empty axis-parallel boxes amidst the points of $\Lambda$. Here we note that the boxes need not be contained in the unit hypercube and that there may not always be a box whose volume equals the dispersion of $\Lambda$. We may obtain a point set $P\subset[0,1]^s$ with roughly $n$ points from a lattice $\Lambda$ by setting 
\[
P=\bigg(\Big(n^{-1}\det(\Lambda)\Big)^{1/s}\Lambda\bigg)\cap[0,1]^s,
\]
where $\det(\Lambda)=\det(A)$.  This assignment gives
$|P|\disp(P)=\disp(\Lambda)/\det(\Lambda)
% |P|\disp(P)=\frac{\disp(\Lambda)}{\det(\Lambda)}
$
plus a small error term depending only on $n$ converging to $0$ as $n$ approaches infinity. In two dimensions, point sets attained in this way from the lattice generated by the matrix
\[
\begin{pmatrix}
1   &   \varphi\\
1   &   \overline\varphi
\end{pmatrix},
\]
with $\varphi$ equal to the golden ratio and $\overline\varphi=1-\varphi$, match the best known asymptotic construction in two dimensions. Moreover, if $F_m$ denotes the $m$-th Fibonacci number, then the intersection of the lattice generated by the matrix
\[
\begin{pmatrix}
F_{m-2}/F_m   &   1\\
1/F_m   &  0
\end{pmatrix}
\]
with the unit square is the Fibonacci lattice of size $F_m$. With the success of lattices in two dimensions, it seems plausible to the authors that the optimal asymptotic point sets in all dimensions will be of this form. By examining two dimensional lattices we hope to gain insight into the higher dimensional case.

Our work is also relevant to geometry of numbers. The celebrated theorem of Minkowski says that every convex and centrally symmetric region with volume strictly greater than $2^d\det(\Lambda)$, must contain a point from the lattice besides the origin.  Dispersion relaxes the condition that a region be central and adds a restriction on the shape. Here it is to be noted that the dispersion can also be defined over any kind of object, usually to other symmetric objects like balls or hypercubes. 

In Section 2 we introduce the notion of normalized box area (NBA) equivalence for lattices, which in particular preserves normalized dispersion, and show that every NBA equivalence class contains a lattice with generating matrix of the form
\[
\begin{pmatrix}
1 & -\Delta\\
1 & -\overline\Delta
\end{pmatrix},
\]
with $\Delta>1$ and $-1<\widetilde{\Delta}<0$ and $B_0=(-\Delta,1)\times(0,1-\widetilde{\Delta})$ being a maximal empty box amidst $\Lambda$. Throughout most of the paper we will assume that both generators are irrational, this guarantees that every empty axis-parallel box is contained in a maximal one.  This assumption is mainly for convenience as statement of our results would otherwise be more complicated.

Section 3 develops the framework used in the rest of the paper. We begin with Proposition \ref{prop:BoxesAreSemiConvergents}, which explores the connection between the continued fraction expansions $\Delta=[a_0,a_1,a_2,\dots]$ and $-\widetilde\Delta=[0,a_{-1},a_{-2},\dots]$, with $a_i\in\N$, and the maximal empty axis-parallel boxes bounded from below by the origin amidst the associated lattice.
We go on to show that each class of dispersion equivalent lattices can be uniquely represented by a two-sided sequence $(a_i)_{i\in\Z}$, up to re-indexing or transposing, obtained from $\Delta$ and $\widetilde\Delta$. We finish the section by proving that a lattice has finite dispersion if and only if its associated sequence is bounded.

In Section 4 we turn our attention to quadratic lattices, i.e.\  two-dimensional embeddings $\Lambda$ of subrings $R$ of the ring of integers of a quadratic field $\Q [\sqrt{d}]$. In Theorem \ref{thm:MainTheorem} we obtain an exact formula for the dispersion of $\Lambda$ in terms of the discriminant of $R$ which shows a linear dependence in $\disc(R)$. The proof of this result requires Theorem \ref{thm:EstimateOnAi}, a technical result concerning the size of the continued fraction coefficients of purely periodic quadratic integers. (We prove this result in Appendix \ref{sec:Proof of EstimateOnAi}.)  In Theorem \ref{thm:Thight Bound} we give tight bounds on the normalized dispersion of a lattice in terms of the maximum of its associated sequence.  The upper and lower bounds for the dispersion of a lattice whose sequence has a given maximum, which differ by less than one half, are attained by certain quadratic lattices.

In Section 5 we find the lattices with the $n$-th smallest normalized dispersion whose sequence converges to 
$
% \frac{4+5^{1/2}}{3}
(4+5^{1/2})/3=2.07868\dots \footnote{Curiously, this is an equation containing the first five natural numbers exactly once.}
$
The generators of the first and second best lattices are both quadratic lattices, while the rest are generated by certain purely periodic quadratic numbers. (It is here that we see that the asymptotic bound found in \cite{Wiart} corresponds to the dispersion of the optimal lattice.) Interestingly, the next best lattice (generated by $1+\sqrt{2}$ and $1-\sqrt{2}$) has normalized dispersion greater than 2. This might help explain why the optimization algorithm in the upcoming dispersion package, by Benjamin Sommer, struggles to optimize a random point set to one with a normalized dispersion smaller than $2$.  Also, the generators of the $n$-th best lattices are Lagrange Numbers corresponding to certain Markov triples (see Remark \ref{rem:Lagrange Connection}), this seems to imply a connection between dispersion and the Lagrange and Markov spectrum which the authors did not further investigate.

Finally, in Section 6 we explain how to modify the framework developed in Section \ref{sec:contfracconnect} in order to drop the assumption that the generators $\Delta$ and $\widetilde\Delta$ are irrational. This allows us to study integration lattices (also known as rank-1 lattices), a common class of low discrepancy point sets. In Theorem \ref{thm:Zaremba eq disp Bound} we demonstrate that Zaremba's Conjecture \ref{con:Zaremba} is equivalent to the existence of a constant $C$ such that for all $n$, there exists an integration lattice with periodic dispersion less than $C/n$. Additionally, by applying our framework, we are able to give a very short proof of the results from \cite{BreneisHinrichs}, i.e.\ that the only integration lattices which achieve a normalized periodic dispersion of 2 are Fibonacci lattices.

\section{Maximal empty boxes amidst lattice points}\label{sec:Empty boxes}

We begin this section by defining an equivalence relation on matrices called normalized box area (NBA) equivalence. NBA equivalent matrices generate lattices with, for our purposes, essentially the same empty box structure. Obviously, if two matrices generate the same lattice they should be equivalent, i.e. multiplication with a matrix in $\GL_2(\Z)$ from the right. Next, since we are interested in the normalized dispersion, and since multiplication by
\[
\begin{pmatrix}
    a  &0\\
    0  &b
\end{pmatrix},
\]
with $a,b>0$, from left does not change the relative position of points of a lattice we also allow this kind of operation.
We also allow transformations by symmetries of a square, as these map axis-parallel boxes to axis-parallel boxes. We summarize these in the following definition.
\begin{defi}
Two matrices are NBA equivalent if they are obtained via the following operations from each other.
Either by multiplication of a generating matrix from the left by diagonal matrices $\diag(a,b)$ where $ab\neq 0$ and the matrix
    \[
    \begin{pmatrix}
    0  &1\\
    1  &0
    \end{pmatrix},
    \]
or from the right by a matrix in $\GL_2(\Z)$.
\end{defi}
The following notation is inspired by the standard notation for the conjugate of an element in a quadratic field. (see Section \ref{sec:dispofquadlatt})
\begin{notation}
We will use $\alpha$, $\beta$, or $\gamma$ to denote the first coordinate elements in a lattice $\Lambda$ and $\widetilde\alpha$, $\widetilde\beta$, or $\widetilde\gamma$ to denote their second coordinates.
\end{notation} 

\begin{lemma}\label{lem:NBAequivoperations}
Two NBA equivalent matrices generate lattices with the same normalized dispersion.
\end{lemma}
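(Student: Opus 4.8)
The plan is to verify that each of the three generating operations in the definition of NBA equivalence preserves the normalized dispersion, which is $\disp(\Lambda)/\det(\Lambda)$. Since any NBA equivalence is a composition of these elementary operations, it suffices to check them one at a time. I would organize the proof around the two broad types of operation: right multiplication by a matrix in $\GL_2(\Z)$, and left multiplication by either a positive diagonal matrix or the coordinate swap.

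\textbf{Right multiplication by $U\in\GL_2(\Z)$.} First I would observe that if $A$ is a generating matrix and $U\in\GL_2(\Z)$, then $AU$ generates the very same lattice $\Lambda$, because $U$ and $U^{-1}$ both have integer entries, so $\Lambda=A\Z^2=AU\Z^2$. Since both the dispersion and the determinant of a lattice depend only on the lattice as a point set (and $|\det(U)|=1$ leaves $\det$ unchanged in absolute value), this operation leaves $\disp(\Lambda)$ and $\det(\Lambda)$ individually unchanged, hence their ratio.

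\textbf{Left multiplication by $D=\diag(a,b)$ with $a,b>0$.} Here the lattice $\Lambda'=D\Lambda$ is obtained by scaling the first coordinate by $a$ and the second by $b$. The key point is that this map sends axis-parallel boxes bijectively to axis-parallel boxes: an open box $(x_1,y_1)\times(x_2,y_2)$ is carried to $(ax_1,ay_1)\times(bx_2,by_2)$, and a box is empty amidst $\Lambda$ if and only if its image is empty amidst $\Lambda'$. The area of the image box is $ab$ times the area of the original, so $\disp(\Lambda')=ab\,\disp(\Lambda)$. Since $\det(\Lambda')=\det(D)\det(\Lambda)=ab\,\det(\Lambda)$, the factor $ab$ cancels in the normalized quantity, giving $\disp(\Lambda')/\det(\Lambda')=\disp(\Lambda)/\det(\Lambda)$. (For $a$ or $b$ negative one would reflect, which also maps axis-parallel boxes to axis-parallel boxes and scales area by $|ab|$, so the same cancellation occurs; but the definition restricts to the stated generators, so I would handle the sign issue only as needed.)

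\textbf{Left multiplication by the swap matrix.} This interchanges the two coordinates, mapping the box $(x_1,y_1)\times(x_2,y_2)$ to $(x_2,y_2)\times(x_1,y_1)$, which is again axis-parallel with the same area, and preserves emptiness. Thus $\disp$ is unchanged, while $\det$ changes only by a sign, leaving the normalized dispersion invariant. I do not expect any genuine obstacle here; the only point requiring a little care is the bookkeeping that empty boxes correspond bijectively under each affine map and that the supremum defining $\disp$ is therefore preserved. Because the boxes in the definition of $\disp(\Lambda)$ need not lie in the unit cube, I would make sure the argument uses the full supremum over all empty axis-parallel boxes and does not accidentally invoke any normalization to $[0,1]^2$.
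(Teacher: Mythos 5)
Your proposal is correct and follows essentially the same route as the paper's own proof: checking each generating operation separately, noting that right multiplication by $\GL_2(\Z)$ fixes the lattice, that $\diag(a,b)$ scales both the box areas and the determinant by the same factor so the ratio cancels, and that the coordinate swap preserves axis-parallel boxes and areas. Your parenthetical care about signs (the definition allows $ab\neq 0$, not just $a,b>0$) is a small point the paper glosses over, but the argument is the same.
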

\begin{proof}
Let $ab\neq 0$ and consider the transformation of $\mathbb{R}^2$ given by left multiplication by $\diag(a,b)$. Under this transformation an axis-parallel box $B=(x_1,x_2)\times (y_1,y_2) \subset \mathbb{R}^2$ maps to the axis-parallel box
\[
B'=\diag(a,b)B= (ax_1,ax_2)\times (by_1,by_2),
\]
while the lattice $\Lambda$ generated by the matrix $A$ maps to lattice $\Lambda'$ generated by the matrix $\diag(a,b)A$.
Since this transformation is one-to-one the number of points from $\Lambda$ that are in $B$ is the same number of points from $\Lambda'$ that are in $B'$. This together with the fact that the normalized volume is preserved, i.e.\ 
\[
\frac{\Vol(B)}{\det(\Lambda)}=\frac{ab\Vol(B)}{ab\det(\Lambda)}=\frac{\Vol(B')}{\det(\Lambda')},
\]
shows that this transformation preserves normalized dispersion.
For the matrix
\[
    \begin{pmatrix}
    0  &1\\
    1  &0
    \end{pmatrix}
\]
the assertion is clearly true as the transformation is isometric, maps axis-parallel boxes to axis-parallel boxes, and is one-to-one. 
Multiplying $A$ from the right with a matrix from $\GL_2(\Z)$ leaves the lattice invariant.
\end{proof}

\begin{defi}
An \emph{irrational lattice} is one in which no two points appear on the same horizontal or vertical line. We call a matrix that generates an irrational lattice \emph{irrational}.
\end{defi}

The main objective for the remainder of this section is to prove that every irrational matrix is NBA equivalent to a matrix of the form
\[
\begin{pmatrix}
1   &-\Delta\\
1   &-\widetilde\Delta
\end{pmatrix}, 
\]
where $1<\Delta$ and $-1<\widetilde\Delta< 0$.\footnote{The choice of negative signs in the matrix will be cleared up in the beginning of section \ref{sec:contfracconnect} and the restrictions on $\Delta$ and $\widetilde\Delta$ in the beginning of section \ref{sec:dispofquadlatt}.} (It is clear that  $\Delta$ and $\widetilde\Delta$ must be irrational.)
The upcoming lemma shows that we can obtain a generating matrix from the bounding points of a single maximal axis-parallel empty box whose bottom touches the origin.

\begin{lemma}\label{lem:TopPointIsDetermined}
Let $\Lambda$ be a lattice and let $B=(\alpha,\beta)\times(0,\widetilde\gamma)$ be a maximal empty axis-parallel box bounded on the bottom by $(0,0)$, on the left by $(\alpha,\widetilde\alpha)$, on the right by $(\beta,\widetilde\beta)$, and on top by $(\gamma,\widetilde\gamma)$. Then $(\gamma,\widetilde\gamma)=(\alpha,\widetilde\alpha)+(\beta,\widetilde\beta)$ and $\Lambda$ is generated by the matrix
\[
A=\begin{pmatrix}
\beta           &   \alpha\\
\widetilde\beta  &   \widetilde\alpha
\end{pmatrix}.
\]
\end{lemma}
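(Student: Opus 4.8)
The plan is to first pin down the rough location of the four boundary points, then to establish the additive relation $(\gamma,\widetilde\gamma)=(\alpha,\widetilde\alpha)+(\beta,\widetilde\beta)$ by playing the emptiness of $B$ against the fact that $\Lambda$ is closed under addition and subtraction, and finally to deduce that $(\alpha,\widetilde\alpha)$ and $(\beta,\widetilde\beta)$ form a basis of $\Lambda$ by a short containment-and-index argument.

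First I would record the positions forced by the hypotheses. Since the origin lies on the bottom edge strictly between the two vertical walls we get $\alpha<0<\beta$; since $(\alpha,\widetilde\alpha)$ and $(\beta,\widetilde\beta)$ sit on the left and right walls and $(\gamma,\widetilde\gamma)$ on the top wall we get $0<\widetilde\alpha,\widetilde\beta<\widetilde\gamma$ and $\alpha<\gamma<\beta$. The strictness of all these inequalities comes from the standing assumption that no two lattice points share a horizontal or vertical line: a coordinate hitting an endpoint would force a touching point onto one of the lines $x=0$, $y=0$, $x=\alpha$, $x=\beta$, or $y=\widetilde\gamma$, each of which already carries a distinct lattice point.

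For the additive relation the key observation is that any lattice point both of whose coordinates lie in the open ranges $(\alpha,\beta)$ and $(0,\widetilde\gamma)$ would lie in the empty box $B$, which is impossible. I apply this to the two lattice points $(\gamma,\widetilde\gamma)-(\beta,\widetilde\beta)=(\gamma-\beta,\widetilde\gamma-\widetilde\beta)$ and $(\gamma,\widetilde\gamma)-(\alpha,\widetilde\alpha)=(\gamma-\alpha,\widetilde\gamma-\widetilde\alpha)$. Both have second coordinate in $(0,\widetilde\gamma)$, so emptiness of $B$ forces their first coordinates out of $(\alpha,\beta)$; since $\gamma-\beta<0$ and $\gamma-\alpha>0$ this yields $\gamma-\beta\le\alpha$ and $\gamma-\alpha\ge\beta$, hence $\gamma=\alpha+\beta$. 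Now the lattice point $(\alpha,\widetilde\alpha)+(\beta,\widetilde\beta)=(\gamma,\widetilde\alpha+\widetilde\beta)$ shares the vertical line $x=\gamma$ with $(\gamma,\widetilde\gamma)$, so by irrationality they coincide and $\widetilde\gamma=\widetilde\alpha+\widetilde\beta$, which is precisely $(\gamma,\widetilde\gamma)=(\alpha,\widetilde\alpha)+(\beta,\widetilde\beta)$.

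Finally, to see that $(\alpha,\widetilde\alpha)$ and $(\beta,\widetilde\beta)$ generate $\Lambda$, I would show that the closed parallelogram $\Pi$ with vertices $(0,0)$, $(\alpha,\widetilde\alpha)$, $(\beta,\widetilde\beta)$, $(\gamma,\widetilde\gamma)$ contains no lattice points besides these four. All four vertices lie in the closed box $\overline B=[\alpha,\beta]\times[0,\widetilde\gamma]$, so by convexity $\Pi\subseteq\overline B$; since the open box $B$ is empty and, by irrationality, the only lattice points on $\partial\overline B$ are the four touching points, these are the only lattice points of $\overline B$, hence of $\Pi$. As $(\alpha,\widetilde\alpha)$ and $(\beta,\widetilde\beta)$ are linearly independent (their determinant $\beta\widetilde\alpha-\alpha\widetilde\beta$ is positive), the half-open fundamental parallelogram they span lies inside $\Pi$ and meets $\Lambda$ only in the origin, so they form a basis and $A$ generates $\Lambda$. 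I expect the middle step to be the main obstacle: the trick of translating the top point by the side points so that it lands inside the empty box unless the additive relation holds is the real content; once $(\gamma,\widetilde\gamma)=(\alpha,\widetilde\alpha)+(\beta,\widetilde\beta)$ is established, the basis claim reduces to the routine containment argument above.
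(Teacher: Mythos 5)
Your proof is correct, and it reaches the key relation $(\gamma,\widetilde\gamma)=(\alpha,\widetilde\alpha)+(\beta,\widetilde\beta)$ by a genuinely different mechanism than the paper. The paper applies the lattice-preserving map $x\mapsto(\gamma,\widetilde\gamma)-x$ (i.e.\ $-I$ followed by translation by the top point) to the whole box, obtaining a second maximal empty box $B'=(\gamma-\beta,\gamma-\alpha)\times(0,\widetilde\gamma)$ with the same bottom and top bounding points, and then invokes the fact that a maximal empty axis-parallel box is completely determined by two bounding points on opposing sides to conclude $B=B'$, which yields both coordinates of the identity at once. You instead test the two individual lattice points $(\gamma,\widetilde\gamma)-(\beta,\widetilde\beta)$ and $(\gamma,\widetilde\gamma)-(\alpha,\widetilde\alpha)$ against the emptiness of $B$ to extract the two one-sided inequalities $\gamma\le\alpha+\beta$ and $\gamma\ge\alpha+\beta$, and then use the no-two-points-on-a-vertical-line assumption to force $\widetilde\gamma=\widetilde\alpha+\widetilde\beta$. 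Your route is more elementary in that it avoids the paper's (unproved, though true) uniqueness claim for maximal boxes, at the price of leaning explicitly on irrationality, which the lemma as stated does not hypothesize but which is the paper's standing assumption in this section; the paper's symmetry argument is shorter and does not need that assumption. For the generating claim both arguments are the same in substance — the fundamental parallelogram of $A$ sits inside $\overline B$ and so meets $\Lambda$ only at the origin — with yours simply spelling out the boundary bookkeeping that the paper leaves implicit.
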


\begin{proof}
Applying the transformation matrix $-I$ to $B$ and then translating by $(\gamma,\widetilde\gamma)$ yields the box $B'=(\gamma-\beta,\gamma-\alpha)\times(0,\widetilde\gamma)$ which is bounded on the bottom by $(0,0)$ and on the top by $(\gamma,\widetilde\gamma)$ and is a maximal empty box because the transformations we applied to $B$ leave the lattice invariant.  Since a two dimensional maximal empty axis-parallel box is completely determined by any two points on opposing sides, we must have $B=B'$. It follows that 
\[
(\alpha,\widetilde\alpha)=(\gamma,\widetilde\gamma)-(\beta,\widetilde\beta).
\]
Since $B\cap\Lambda$ is empty, the only point from $\Lambda$ in the parallelepiped $P=\{Ax:x\in[0,1)\}$ generated by $A$  is $(0,0)$. This, together with the fact that $A$ has full rank, shows that $A$ generates $\Lambda$.
\end{proof}

To find the dispersion of a lattice $\Lambda$ we need only consider boxes maximal empty boxes $B$ amidst $\Lambda$ that are bounded on the bottom by $(0,0)$. This is because translating a box by a lattice point does not change its area nor how many points it contains. Thus, we are only interested in boxes of the following type.

\begin{notation}
Let $\mathcal B=\{B_n=(\alpha_n,\beta_n)\times(0,\widetilde\alpha_n+\widetilde\beta_n):n\in\Z\}$ be the set of maximal empty axis-parallel boxes that are bounded by $(0,0)$ on the bottom and ordered according to height. 
\end{notation}

The progression from $B_n\in\mathcal B$ to $B_{n+1}$ (one box higher) has two cases which depend on which side of the $y$-axis is the point bounding $B_n$ from above. If $\alpha_n+\beta_n>0$, then the top point $(\alpha_n+\beta_n,\widetilde\alpha_n+\widetilde\beta_n)$ of $B_n$ is on the right side of the $y$-axis. To move to the next box we push the right side of $B_n$ in until it reaches its top point, then extend the top  up until it hits another point, by Lemma \ref{lem:TopPointIsDetermined} the top of $B_{n+1}$ is completely determined by the left and right points. Thus the right point of $B_{n+1}$ was the top point of $B_n$, i.e.\ $\beta_{n+1}=\alpha_n+\beta_n$ while the left points of $B_n$ and $B_{n+1}$ are the same, i.e.\ $\alpha_{n+1}=\alpha_n$.  Similarly, if $\alpha_n+\beta_n<0$, then the top point of $B_n$ is on the left side of the $y$-axis and $\alpha_{n+1}=\alpha_n+\beta_n$ while $\beta_{n+1}=\beta_n$. We summarize this by
\begin{align}\label{eq:Going Up}
    \alpha_{n+1} &=\begin{cases}
    \alpha_n            &\text{if } \alpha_n+\beta_n >0,\\
    \alpha_n+\beta_n    &\text{if } \alpha_n+\beta_n <0,
    \end{cases}\\\label{eq:FormulaForBetaNPlus1}
   \beta_{n+1} &=\begin{cases}
    \alpha_n+\beta_n     &\text{if } \alpha_n+\beta_n >0,\\
    \beta_n              &\text{if } \alpha_n+\beta_n <0.
    \end{cases}
\end{align}

The progression from $B_n$ to $B_{n-1}$ (one box lower)  again has two cases which depend on which of the points bounding $B_n$ on the left and right is higher. If $\widetilde\alpha_n-\widetilde\beta_n>0$, then the point bounding $B_n$ on the left is higher. In this case we push the top of $B_n$ down until it reaches the point of the left and then extend the box to the left; the point on the right remains the same. Thus $\beta_{n+1}=\beta_n$ and the new top is $\widetilde\alpha_n=\widetilde\alpha_{n+1}+\widetilde\beta_{n+1}=\widetilde\alpha_{n}+\widetilde\beta_{n}$, which means that $\alpha_{n+1}=\alpha_n-\beta_n$. Similarly, if $\widetilde\alpha_n-\widetilde\beta_n<0$, then the point bounding $B_n$ on the right is higher and we have $\alpha_{n+1}=\alpha_n$ and $\beta_{n+1}=\beta_n-\alpha_n$. We summarize this by
\begin{align}\label{eq:Going Down}
     \alpha_{n-1} &=\begin{cases}
    \alpha_n-\beta_n &\text{if } \widetilde\alpha_n-\widetilde\beta_n >0,\\
    \alpha_n         &\text{if } \widetilde\alpha_n-\widetilde\beta_n <0,
    \end{cases} \\
    \beta_{n-1} &=\begin{cases}
    \beta_n          &\text{if }\widetilde\alpha_n-\widetilde\beta_n >0,\\
    \beta_n-\alpha_n &\text{if }\widetilde\alpha_n-\widetilde\beta_n <0.
    \end{cases}
\end{align}

\begin{lemma}\label{lem:EquivalentLattice}
Let $\Lambda$ be an irrational lattice. There exist $\Delta>1$ and $-1<\widetilde\Delta<0$ such that the lattice $\Lambda'$ generated by
\[
\begin{pmatrix}
1   &-\Delta\\
1   &-\widetilde\Delta
\end{pmatrix}
\quad \text{satisfies}\quad
\frac{\disp(\Lambda')}{\det(\Lambda')}=\frac{\disp(\Lambda)}{\det(\Lambda)}.
\]
\end{lemma}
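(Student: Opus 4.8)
The plan is to produce the desired matrix by selecting a suitable box from the family $\mathcal B$, reading off its generating matrix via Lemma \ref{lem:TopPointIsDetermined}, and then normalizing by an NBA operation. First I would fix a maximal empty box $B_n=(\alpha_n,\beta_n)\times(0,\widetilde\alpha_n+\widetilde\beta_n)\in\mathcal B$ bounded below by the origin; such a box exists since one may grow an empty box upward from just above any lattice point until it becomes maximal. By Lemma \ref{lem:TopPointIsDetermined} the lattice is then generated by $A_n=\left(\begin{smallmatrix}\beta_n & \alpha_n\\ \widetilde\beta_n & \widetilde\alpha_n\end{smallmatrix}\right)$, and because the bottom point $(0,0)$ lies strictly inside the horizontal extent while the left and right points lie strictly between the bottom and the top, one has $\alpha_n<0<\beta_n$ and $\widetilde\alpha_n,\widetilde\beta_n>0$.

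The key observation is that left multiplication by the positive diagonal matrix $\diag(1/\beta_n,1/\widetilde\beta_n)$ — an NBA operation, hence dispersion preserving by Lemma \ref{lem:NBAequivoperations} — sends the right point $(\beta_n,\widetilde\beta_n)$ to $(1,1)$ and the left point to $(-\Delta,-\widetilde\Delta)$ with $\Delta=|\alpha_n|/\beta_n>0$ and $\widetilde\Delta=-\widetilde\alpha_n/\widetilde\beta_n<0$, so the resulting matrix already has the target shape. Thus everything reduces to exhibiting one box in $\mathcal B$ for which $|\alpha_n|>\beta_n$ (equivalently $\Delta>1$) and $\widetilde\alpha_n<\widetilde\beta_n$ (equivalently $\widetilde\Delta>-1$); if instead the reversed inequalities $|\alpha_n|<\beta_n$ and $\widetilde\alpha_n>\widetilde\beta_n$ hold, one further reflection $\diag(-1,1)$, which interchanges the left and right points, repairs both sign conditions.

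To locate such a box I would track the ratio $\Delta_n:=|\alpha_n|/\beta_n$ under the upward recursion \eqref{eq:Going Up} and \eqref{eq:FormulaForBetaNPlus1}. A short computation shows this recursion induces a map depending only on $\Delta_n$, namely $\Delta_{n+1}=\Delta_n/(1-\Delta_n)$ when $\Delta_n<1$ and $\Delta_{n+1}=\Delta_n-1$ when $\Delta_n>1$; here $\Delta_n\neq 1$ always, since $\Delta_n=1$ would place the top and bottom points on a common vertical line, contradicting irrationality, and the same reasoning applied one step later excludes $\Delta_n\in\{1/2,2\}$. Reading off the heights in the same recursion shows that $B_{n+1}$ has its left point below its right point exactly when the step was taken in the branch $\Delta_n<1$. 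Combining these two facts, $B_{n+1}$ satisfies both target inequalities precisely when $\Delta_n\in(1/2,1)$, and satisfies the reversed pair precisely when $\Delta_n\in(1,2)$; in either case the reduction of the previous paragraph concludes.

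The remaining point, which I expect to carry the real content, is to show that the orbit of $\Delta_n$ under this Gauss-type map always enters $(1/2,1)\cup(1,2)$ after finitely many upward steps. This is a direct dynamical check: from a value in $(2,\infty)$ the rule $\Delta\mapsto\Delta-1$ decreases it through the interval and lands in $(1,2)$, while from a value $1/(m+\theta)\in(0,1/2)$ with integer $m\geq 2$ and $\theta\in(0,1)$ the rule $\Delta\mapsto\Delta/(1-\Delta)$ raises it to $1/((m-1)+\theta)$, so after finitely many steps it exceeds $1/2$ while remaining below $1$. Since the orbit is defined for all $n$ and never meets $1$, it must enter the open winning set, at which point the chosen box — after the optional reflection and the diagonal normalization — has the required form, and the normalized dispersion has been preserved throughout.
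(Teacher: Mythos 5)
Your argument is correct and follows essentially the same route as the paper's proof: both locate a box $B_k\in\mathcal B$ with $\alpha_k+\beta_k<0<\widetilde\beta_k-\widetilde\alpha_k$ by iterating the recursions \eqref{eq:Going Up}--\eqref{eq:Going Down} and then rescale by $\diag(\beta_k^{-1},\widetilde\beta_k^{-1})$. Your repackaging of the iteration as a Farey-type map on the ratio $|\alpha_n|/\beta_n$, and your use of the reflection $\diag(-1,1)$ where the paper instead iterates downward, are only organizational differences (the reflection does require an additional column swap and rescaling to restore the normal form, which you elide).
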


\begin{proof}
Let $\mathcal B$ be the maximal empty axis-parallel boxes amidst $\Lambda$ bounded by $(0,0)$ on the bottom ordered according to height. We claim that there is an $k$ such that $\alpha_k+\beta_k<0<\widetilde\beta_k-\widetilde\alpha_k$. If $\alpha_0+\beta_0>0$ then the recurrence for going up is $\alpha_{n+1}=\alpha_n$ and $\beta_{n+1}=\alpha_n+\beta_n$ until we reach the minimal $k\in\N$ such that $(k+1)\alpha_0+\beta_0<0$. Also, until $k$, the recurrence for going down is $\alpha_{n-1}=\alpha_n$ and $\beta_{n-1}=\beta_n-\alpha_n$ so that $\widetilde \beta_n - \widetilde \alpha_n>0$. Therefore, this choice of $k$ satisfies our claim.
If $\alpha_0+\beta_0<0$ and $\widetilde\alpha_0-\widetilde\beta_0>0$ then the recurrence for going down
is $\alpha_{n-1}=\alpha_n-\beta_n$ and $\beta_{n-1}=\beta_n$ until we reach the minimal $k\in\N$ such that $\widetilde\alpha_0-(k+1)\widetilde\beta_0<0$. Also, again until $k$, the recurrence for going up is $\alpha_{n+1}=\alpha_n+\beta_n$ and $\beta_{n+1}=\beta_n$ so that $\alpha_n+\beta_n<0$. Thus, this choice of $k$ satisfies our claim, once again.
By Lemma \ref{lem:TopPointIsDetermined}, $\Lambda$ is generated by the matrix
\[
\begin{pmatrix}
\beta_k         &   \alpha_n\\
\widetilde\beta_k  &   \widetilde\alpha_n
\end{pmatrix},
\quad \text{while}\quad
\begin{pmatrix}
\beta_k^{-1}         &   0\\
0  &   \widetilde\beta_k^{-1}
\end{pmatrix}
\begin{pmatrix}
\beta_k         &   \alpha_k\\
\widetilde\beta_k  &   \widetilde\alpha_k
\end{pmatrix}
\]
generates a lattice $\Lambda'$ with the same normalized dispersion as $\Lambda$. By choice of $k$ we have $\Delta=-\alpha_k/\beta_k>1$ and $-1<\widetilde\Delta=-\widetilde\alpha_k/\widetilde\beta_k<0$.
\end{proof}

The following lemma allows us to fix a starting box. We prove it for more general $\Delta$ and $\widetilde\Delta$ as it will be needed in Section \ref{sec:Periodic Disperson}. 

\begin{lemma}\label{lem:StartingBoxIsEmpty}
Let $\Delta\ge 1$ and $-1\le\widetilde\Delta<0$ and let $\Lambda$ be the lattice generated by
$\{(1,1),(-\Delta,-\widetilde\Delta)\}$. Then the box $B_0=(-\Delta,1)\times(0,1-\widetilde\Delta)$ is a maximal empty axis-parallel box amidst the points of $\Lambda$ that is bounded on the bottom by $(0,0)$.
\end{lemma}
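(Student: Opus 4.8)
The plan is to work directly with the lattice points, which are exactly the vectors $(m-n\Delta,\,m-n\widetilde\Delta)$ for $m,n\in\Z$. First I would record the four lattice points that will sit on the boundary of $B_0$: the origin $(0,0)$ on the bottom edge $y=0$, the generator $(1,1)$ on the right edge $x=1$, the generator $(-\Delta,-\widetilde\Delta)$ on the left edge $x=-\Delta$, and their sum $(1-\Delta,1-\widetilde\Delta)$ on the top edge $y=1-\widetilde\Delta$. A quick check using $\Delta\ge 1$ and $-1\le\widetilde\Delta<0$ confirms that each of these lies within the relevant face (for instance $1-\Delta\in[-\Delta,1]$ and $-\widetilde\Delta\in[0,1-\widetilde\Delta]$). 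These four points are what will force maximality, so the real content of the proof is to show that the open box contains no lattice point.

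For emptiness I would argue by contradiction: suppose $(x,y)=(m-n\Delta,\,m-n\widetilde\Delta)$ lies in the open box, so that $-\Delta<x<1$ and $0<y<1-\widetilde\Delta$. The key observation is to look at the difference $y-x=n(\Delta-\widetilde\Delta)$. From the two coordinate ranges one gets $-1<y-x<1+(\Delta-\widetilde\Delta)$, and since $\Delta-\widetilde\Delta=\Delta+(-\widetilde\Delta)>1$ (because $\Delta\ge1$ and $-\widetilde\Delta>0$), dividing through by $\Delta-\widetilde\Delta$ pins $n$ to the open interval $\bigl(-\tfrac{1}{\Delta-\widetilde\Delta},\,1+\tfrac{1}{\Delta-\widetilde\Delta}\bigr)\subseteq(-1,2)$. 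Hence the only integer candidates are $n=0$ and $n=1$.

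The two remaining cases are then dispatched by elementary interval arithmetic on the integer $m$. For $n=0$ the point is $(m,m)$, where $x\in(-\Delta,1)$ forces $m\le 0$ while $y\in(0,1-\widetilde\Delta)$ forces $m\ge 1$, a contradiction; for $n=1$ the point is $(m-\Delta,\,m-\widetilde\Delta)$, where the $x$-range forces $m\ge 1$ while the $y$-range forces $m\le 0$, again impossible. This establishes that $B_0$ is empty.

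Finally, maximality is immediate from the four boundary points already identified: the origin on $y=0$ blocks any downward extension, $(1-\Delta,1-\widetilde\Delta)$ on $y=1-\widetilde\Delta$ blocks upward extension, $(-\Delta,-\widetilde\Delta)$ on $x=-\Delta$ blocks leftward extension, and $(1,1)$ on $x=1$ blocks rightward extension, so no strictly larger axis-parallel box can remain empty. Since $(0,0)$ lies on the bottom edge, $B_0$ is bounded below by the origin, as claimed. The one genuinely clever step is the reduction to $n\in\{0,1\}$ via the identity $y-x=n(\Delta-\widetilde\Delta)$ together with $\Delta-\widetilde\Delta>1$; everything else is bookkeeping with the given bounds on $\Delta$ and $\widetilde\Delta$.
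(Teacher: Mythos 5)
Your proof is correct, and it takes a genuinely different route from the paper's. The paper argues geometrically: it takes the fundamental parallelepiped $P$ spanned by $(1,1)$ and $(-\Delta,-\widetilde\Delta)$, observes that the lattice points defining $P$ lie on the boundary of $B_0$, decomposes $B_0\setminus P$ into four corner regions, and shows each corner region sits inside the interior of a translate of $P$ by checking that its extreme points do (a convexity argument); emptiness of the translates of the fundamental domain then gives emptiness of $B_0$. You instead work purely arithmetically with the coordinates $(m-n\Delta,\,m-n\widetilde\Delta)$: the identity $y-x=n(\Delta-\widetilde\Delta)$ together with $\Delta-\widetilde\Delta>1$ pins $n$ to $\{0,1\}$, and each case collapses by incompatible integer constraints on $m$. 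Your argument is shorter, entirely self-contained, and avoids any appeal to convexity or extreme points; it also handles the boundary cases $\Delta=1$ and $\widetilde\Delta=-1$ (needed later for Section 6) with no extra work. What the paper's approach buys is a geometric picture (Figure 1) that motivates the later box-progression rules; what yours buys is a cleaner, fully elementary verification. Your maximality argument via the four lattice points in the relative interiors of the four faces is also complete: any axis-parallel box properly containing $B_0$ must strictly extend at least one face and would therefore swallow the corresponding boundary point.
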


\begin{proof}
Let $P$ be the fundamental parallelepiped generated by the basis $\{(1,1),(-\Delta,-\widetilde{\Delta})\}$. The points from $\Lambda$ that describe $P$ are on the boundary of $B_0$ and so $B_0\setminus P$ has four components, each containing a different corner of $B_0$ (see Figure \ref{fig:Polygons}). We claim that the components of $B_0\setminus P$ that contain the lower left and upper right corners, which we call $L$ and $R$ respectively, are contained in the interior of the convex region $P+x(1,1)$. Since the interior of  $P+x(1,1)$ is empty, so too will be $L$ and $R$. 

The region $L$ is open and convex with extreme points $(0,0)$, $(-\Delta,-\widetilde{\Delta})$, and $(-\Delta,0)$. The first two are in $P$ and are therefore trivially in $P+x(1,1)$. Decomposing $(-\Delta,0)$ as
\[
(-\Delta,0)=\tfrac{\Delta}{\Delta-\widetilde\Delta}(-\Delta,-\widetilde\Delta)+\tfrac{\Delta\widetilde\Delta}{\Delta-\widetilde\Delta}(1,1)
\]
and applying the assumptions $\Delta 1$ and $\widetilde\Delta 0$ we see that $(-\Delta,0)$ is indeed in $P+x(1,1)$. As the extreme points of $L$ are contained in $P+x(1,1)$, so too must the open set $L$ be in the interior of $P+x(1,1)$. 
Similarly, the region $R$ is open and convex with extreme points $(1,1)$, $(1-\Delta,1-\widetilde\Delta)$, and $(1,1-\widetilde\Delta)=(0,-\widetilde\Delta)+(1,1)$. The first two points are again trivially in $P+x(1,1)$ while $(0,\widetilde\Delta)$ may be decomposed as
\[
(0,-\widetilde\Delta)=-\tfrac{\widetilde\Delta}{\Delta-\widetilde\Delta}(-\Delta,-\widetilde\Delta)-\tfrac{\Delta\widetilde\Delta}{\Delta-\widetilde\Delta}(1,1).
\]
It follows that $R$ is contained within the interior of $P+x(1,1)$.

Using a similar argument one can show that the components of $B_0\setminus P$ that contain the upper left and lower right corners of $B$ are contained in the interior of $P+x(-\Delta,-\widetilde\Delta)$ and are, therefore, also empty.
\end{proof}

We summarize the previous two Lemmas in the following proposition.

\begin{prop}\label{prop:NBAequivmatrices}
Every irrational matrix is NBA equivalent to a matrix of the form
\[
\begin{pmatrix}
1   &-\Delta\\
1   &-\widetilde\Delta
\end{pmatrix},
\]
where $\Delta>1$ and $-1<\Delta<0$. Amidst the points of a lattice generated by such a matrix, the box $B_0=(-\Delta,1)\times(0,1-\widetilde\Delta)$ is a maximal empty axis-parallel box.
\end{prop}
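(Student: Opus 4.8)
The plan is to assemble the proposition directly from the two preceding lemmas, since each half of the statement is precisely the conclusion of one of them. For the first assertion, I would start from an irrational matrix $A$, which by definition generates an irrational lattice $\Lambda$, and apply Lemma \ref{lem:EquivalentLattice} to obtain $\Delta>1$ and $-1<\widetilde\Delta<0$ for which the matrix $\begin{pmatrix} 1 & -\Delta \\ 1 & -\widetilde\Delta \end{pmatrix}$ generates a lattice with the same normalized dispersion. The point to verify is that this is genuinely an \emph{NBA} equivalence, not merely an equality of normalized dispersions: the construction in Lemma \ref{lem:EquivalentLattice} reaches the target matrix through admissible operations only, namely right multiplication by an element of $\GL_2(\Z)$ (changing the generating basis via Lemma \ref{lem:TopPointIsDetermined}) followed by left multiplication by the positive diagonal matrix $\diag(\beta_k^{-1},\widetilde\beta_k^{-1})$. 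By Lemma \ref{lem:NBAequivoperations} these preserve normalized dispersion, and by the definition of NBA equivalence they are exactly the allowed moves, so the two matrices are NBA equivalent.

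For the second assertion, I would identify the columns of the target matrix with the generating set in Lemma \ref{lem:StartingBoxIsEmpty}: the matrix $\begin{pmatrix} 1 & -\Delta \\ 1 & -\widetilde\Delta \end{pmatrix}$ has columns $(1,1)$ and $(-\Delta,-\widetilde\Delta)$, which are precisely the vectors $\{(1,1),(-\Delta,-\widetilde\Delta)\}$ appearing there. Since $\Delta>1$ forces $\Delta\ge 1$ and $-1<\widetilde\Delta<0$ forces $-1\le\widetilde\Delta<0$, the hypotheses of Lemma \ref{lem:StartingBoxIsEmpty} are satisfied, and its conclusion asserts exactly that $B_0=(-\Delta,1)\times(0,1-\widetilde\Delta)$ is a maximal empty axis-parallel box amidst the lattice.

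There is essentially no real obstacle to overcome, since the substantive content has already been proved; the work is purely in checking compatibility of hypotheses. The one point deserving explicit care is that the diagonal scaling used in Lemma \ref{lem:EquivalentLattice} is admissible, i.e.\ $\beta_k^{-1}\widetilde\beta_k^{-1}\neq 0$. This follows from irrationality of $\Lambda$: the right boundary point $(\beta_k,\widetilde\beta_k)$ and the origin are both lattice points, so $\beta_k=0$ would place them on a common vertical line and $\widetilde\beta_k=0$ on a common horizontal line, either of which is forbidden for an irrational lattice. With this verified, together with the reconciliation of the strict inequalities on $\Delta,\widetilde\Delta$ in the proposition against the non-strict hypotheses of Lemma \ref{lem:StartingBoxIsEmpty}, the two lemmas combine to yield the proposition.
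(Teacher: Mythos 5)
Your proposal is correct and follows exactly the paper's route: the paper offers no separate argument, stating only that the proposition ``summarizes the previous two Lemmas'' (Lemma \ref{lem:EquivalentLattice} and Lemma \ref{lem:StartingBoxIsEmpty}), which is precisely the assembly you carry out. Your added checks --- that the operations in Lemma \ref{lem:EquivalentLattice} are genuinely NBA-admissible and that $\beta_k,\widetilde\beta_k\neq 0$ by irrationality --- are details the paper leaves implicit, not a different approach.
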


The upcoming proposition applies to all lattices but as we see in Remark \ref{rem:NormInteger} it is most interesting when applied to quadratic lattices as the appearing summands will be integers.

\begin{prop}\label{prop:VolumeIsNormsPlusConstant}
Let $\Lambda$ be the irrational lattice generated by $(1,1)$ and $(-\Delta,-\widetilde\Delta)$ with $1<\Delta$ and $-1<\tilde\Delta<0$ and let $\mathcal B$ be as above. Then
\[
\vol(B_n)= |\alpha_{n}\widetilde\alpha_{n}|+|\beta_{n}\widetilde\beta_{n}|+\Delta-\widetilde\Delta.
\]
\end{prop}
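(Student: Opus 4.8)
The plan is to compute $\vol(B_n)$ directly as width times height and then reorganize the resulting expression so that the determinant of a generating matrix of $\Lambda$ appears. Since $B_n=(\alpha_n,\beta_n)\times(0,\widetilde\alpha_n+\widetilde\beta_n)$, where the top height $\widetilde\alpha_n+\widetilde\beta_n$ comes from Lemma \ref{lem:TopPointIsDetermined}, its area is
\[
\vol(B_n)=(\beta_n-\alpha_n)(\widetilde\alpha_n+\widetilde\beta_n)
=(\beta_n\widetilde\alpha_n-\alpha_n\widetilde\beta_n)+\beta_n\widetilde\beta_n-\alpha_n\widetilde\alpha_n.
\]
The first summand is exactly the determinant of the matrix $A=\begin{pmatrix}\beta_n&\alpha_n\\\widetilde\beta_n&\widetilde\alpha_n\end{pmatrix}$, which by Lemma \ref{lem:TopPointIsDetermined} generates $\Lambda$; hence $|\det A|=\det(\Lambda)=\Delta-\widetilde\Delta$.

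The core of the argument is then to pin down the signs of $\alpha_n,\beta_n,\widetilde\alpha_n,\widetilde\beta_n$. Because $(0,0)$ bounds $B_n$ from below it lies on the bottom edge, so $0\in(\alpha_n,\beta_n)$ and hence $\alpha_n<0<\beta_n$. For the heights I would invoke irrationality of $\Lambda$: the left bounding point $(\alpha_n,\widetilde\alpha_n)$ touches the left edge of $B_n$, so its height satisfies $0\le\widetilde\alpha_n\le\widetilde\alpha_n+\widetilde\beta_n$; it cannot equal $0$, for then it would lie on the line $y=0$ together with $(0,0)$ while $\alpha_n\neq0$, contradicting irrationality. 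Hence $\widetilde\alpha_n>0$, and the same argument applied to the right point $(\beta_n,\widetilde\beta_n)$ gives $\widetilde\beta_n>0$.

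With the signs in hand the formula follows by bookkeeping. From $\alpha_n<0<\widetilde\alpha_n$ we get $-\alpha_n\widetilde\alpha_n=|\alpha_n\widetilde\alpha_n|$, and from $\beta_n,\widetilde\beta_n>0$ we get $\beta_n\widetilde\beta_n=|\beta_n\widetilde\beta_n|$. Moreover $\beta_n\widetilde\alpha_n>0$ and $-\alpha_n\widetilde\beta_n>0$, so $\det A>0$ and therefore $\det A=|\det A|=\Delta-\widetilde\Delta$. Substituting these three facts into the displayed expansion yields
\[
\vol(B_n)=(\Delta-\widetilde\Delta)+|\beta_n\widetilde\beta_n|+|\alpha_n\widetilde\alpha_n|,
\]
as claimed. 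I expect the only genuinely delicate point to be the positivity of the heights $\widetilde\alpha_n,\widetilde\beta_n$, which is precisely where the irrationality hypothesis is essential; everything else is routine determinant algebra. As a sanity check, for $n=0$ one has $(\alpha_0,\widetilde\alpha_0)=(-\Delta,-\widetilde\Delta)$ and $(\beta_0,\widetilde\beta_0)=(1,1)$, and the formula collapses to $\vol(B_0)=(1+\Delta)(1-\widetilde\Delta)$.
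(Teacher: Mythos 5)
Your proof is correct, and it takes a genuinely different route from the paper's. The paper proves the identity by induction on $n$: it partitions $B_n$ (up to measure zero) into the two corner boxes $B(\alpha_n)$, $B(\beta_n)$ of areas $|\alpha_n\widetilde\alpha_n|$, $|\beta_n\widetilde\beta_n|$ and two remainder regions $C_{n,1}$, $C_{n,2}$, verifies that $\vol(C_{0,1})+\vol(C_{0,2})=\Delta-\widetilde\Delta$ in the base case, and then tracks how the remainder regions transform under the box progression rules \eqref{eq:Going Up} (one is unchanged, the other is a lattice translate), so their total area is conserved; the case $n\le 0$ is handled separately. You instead expand $\vol(B_n)=(\beta_n-\alpha_n)(\widetilde\alpha_n+\widetilde\beta_n)$ directly and recognize the cross terms $\beta_n\widetilde\alpha_n-\alpha_n\widetilde\beta_n$ as $\det(\Lambda)=\Delta-\widetilde\Delta$ via Lemma \ref{lem:TopPointIsDetermined}, with the sign analysis ($\alpha_n<0<\beta_n$ from the origin lying on the bottom edge, $\widetilde\alpha_n,\widetilde\beta_n>0$ from irrationality) doing the remaining work. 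Your argument is shorter, needs no induction, treats all $n\in\Z$ uniformly, and makes transparent \emph{why} the additive constant is exactly $\det(\Lambda)$ — which is arguably the conceptual content of the statement, and meshes nicely with Remark \ref{rem:NormInteger}, where the two corner terms become norms. What the paper's partition argument buys in exchange is the explicit geometric picture of Figure \ref{fig:Rearange Boxes}, exhibiting the conserved remainder regions as concrete sets that translate from box to box; but as a proof of the stated identity, your computation is complete and, if anything, cleaner.
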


\begin{proof}
We refer the reader to Figure \ref{fig:Rearange Boxes} to help visualize this proof. We will prove the statement for $n\geq 0$ by induction, the proof for $n\leq0$ is similar. Given $n\geq0$ we partition $B_n$, up to a set of measure zero, into the following four sub-boxes:
\begin{itemize}
    \item $B(\alpha_n)=(\alpha_n,0)\times(0,\widetilde\alpha_n)$, the box whose lower right corner is $(0,0)$ and whose upper left corner is $(\alpha_n,\widetilde\alpha_n)$. This box has volume $\alpha_n\widetilde\alpha_n$.
    \item $B(\beta_n)=(0,\beta)\times(0,\widetilde\beta_n)$, the box whose lower left corner is $(0,0)$ and whose upper right corner is $(\beta_n,\widetilde\beta_n)$. This box has volume $\beta_n\widetilde\beta_n$.
    \item$C_{n,1}=(\alpha_n,0)\times(\widetilde\alpha_n,\widetilde\alpha_n+\widetilde\beta_n)$, the part of $B_n$ that is above $B(\alpha_n)$ and to the left of the $y$-axis.
    \item $C_{n,2}=(0,\beta_n)\times(\widetilde\beta_n,\widetilde\alpha_n+\widetilde\beta_n)$, the part of $B_n$ that is above $B(\beta_n)$ and to the right of the $y$-axis.
\end{itemize}
The box $B_0=(-\Delta,1)\times(0,1-\widetilde\Delta)$ has volume
\[
(1+\Delta)(1-\widetilde\Delta)=|\Delta\widetilde\Delta|+1+\Delta-\widetilde\Delta=|\alpha_{0}\widetilde\alpha_{0}|+|\beta_{0}\widetilde\beta_{0}|+\vol(C_{0,1})+\vol(C_{0,2})
\]
and it follows that $\vol(C_{0,1})+\vol(C_{0,2})=\Delta-\widetilde\Delta$.

Assume that $\vol(C_{n,1})+\vol(C_{n,2})=\Delta-\widetilde\Delta$ for some $n\geq0$. Without loss of generality assume that $\alpha_n+\beta_n>0$, i.e.\ $\widetilde\alpha_{n+1}-\widetilde\beta_{n+1}<0$ and $(\alpha_n,\widetilde\alpha_n)$ is below the point $(\beta_n,\widetilde\beta_n)$. Up to a set of measure zero, we partition $B_{n+1}$ into the following four sub-boxes: $B(\alpha_{n+1})$ and $B(\beta_{n+1})$ together with
\begin{itemize}
\item $D_{n+1,1}=(\alpha_{n+1},0)\times(\widetilde\alpha_{n+1},\widetilde\beta_{n+1})$, the part of $B_{n+1}$ that is above $(\alpha_{n+1},\widetilde\alpha_{n+1})$, below $(\beta_{n+1},\widetilde\beta_{n+1})$, and to the left of the $y$-axis and
\item $D_{n+1,2}=(\alpha_{n+1},\beta_{n+1})\times(\widetilde\beta_{n+1},\widetilde\alpha_{n+1}+\widetilde\beta_{n+1})$, the part of $B_{n+1}$ that is above $(\beta_{n+1},\widetilde\beta_{n+1})$.
\end{itemize}
Using the formulas for $\alpha_{n+1}$ and $\beta_{n+1}$ we see that $D_{n,1}=C_{n,1}$ and 
\[
D_{n+1,2}=(\alpha_n,\alpha_n+\beta_n)\times(\widetilde\alpha_n+\widetilde\beta_n,2\widetilde\alpha_n+\widetilde\beta_n)=C_{n,2}+(\alpha_n,\widetilde\alpha_n).
\]
Thus 
\[
\vol(B_{n+1})=|\alpha_{n+1}\widetilde\alpha_{n+1}|+|\beta_{n+1}\widetilde\beta_{n+1}|+\vol(C_{n,1})+\vol(C_{n,2})=|\alpha_{n+1}\widetilde\alpha_{n+1}|+|\beta_{n+1}\widetilde\beta_{n+1}|+\Delta-\widetilde\Delta
\]
and the induction hypothesis must hold for $n+1$.
\end{proof}

%%%%%%%%%%%%%%%%%%%%%%%%%%%%%%%%%%%%%%%%%%%%%%%%%%%%%%%%%%%%%%%%%%%%%

\section{The continued fraction connection}\label{sec:contfracconnect}

Throughout this section and the next, $\Lambda$ will denote an irrational lattice with generating matrix
\[
\begin{pmatrix}
1   &-\Delta\\
1   &-\widetilde\Delta
\end{pmatrix},
\]
where $1<\Delta$ and $-1<\widetilde\Delta<0$; by Lemma \ref{lem:NBAequivoperations} and Proposition \ref{prop:NBAequivmatrices} this can be done without losing generality.   The set $\mathcal B=\{B_n=(\alpha_n,\beta_n)\times(0,\widetilde\alpha_n+\widetilde\beta_n):n\in\Z\}$ of maximal empty axis-parallel boxes that are bounded by $(0,0)$ on the bottom and ordered according to height will be indexed so that $B_0=(-\Delta,1)\times(0,1-\widetilde\Delta)$; by Lemma \ref{lem:StartingBoxIsEmpty}, the conditions on the generators of $\Lambda$ guarantee that the box $B_0$ is in $\mathcal B$.

We write the continued fraction expansions of $\Delta$ and $-\widetilde\Delta$ as $\Delta=[a_0,a_1,a_2\dots]$ and $-\widetilde\Delta=[0,a_{-1},a_{-2},\dots]$, where $0<a_i\in\N$; the conditions $1<\Delta$ and $-1<\widetilde\Delta<0$ tell us that their continued fraction expansions have this form. The sequence $(a_i)_{i\in\Z}$ induce sequences $(p_i)_{i\in\Z}$ and $(q_i)_{i\in\Z}$ defined by
\begin{align*}
p_{i-1}&=-a_{i}p_{i}+p_{i+1},   &    p_{-1}&=0,& p_{0}&=1, &p_{i+1}&=a_{i}p_{i}+p_{i-1},\\
q_{i-1}&=-a_{i}q_{i}+q_{i+1},   &    q_{-1}&=1, &q_{0}&=0, &q_{i+1}&=a_{i}q_{i}+q_{i-1}.
\end{align*}

\begin{remark}
With these definitions we can now write the initially used matrix as
\[
\begin{pmatrix}
1   &-\Delta\\
1   &-\widetilde\Delta
\end{pmatrix}
=
\begin{pmatrix}
p_0-q_0\Delta   &p_{-1}-q_{-1}\Delta\\
p_0-q_0\widetilde\Delta   &p_{-1}-q_{-1}\widetilde\Delta
\end{pmatrix}.
\]
\end{remark}
The quotient $p_i/q_i$ is the \emph{$i$-th convergent of $\Delta$}. If the $i$-th convergent of $-\widetilde\Delta$ is denoted by $\widetilde p_i/\widetilde q_i$, then $\widetilde p_i=(-1)^{i}p_{-i}$ and $\widetilde q_i=(-1)^{i+1} q_{-i}$. This is easy to see by induction because for negative $i$, 
\begin{align*}
\widetilde p_{i+1} &= \widetilde a_i \widetilde p_i + \widetilde p_{i-1}
&
\widetilde q_{i+1} &= \widetilde a_i \widetilde q_i + \widetilde q_{i-1}\\
                &= (-1)^{i}a_{-i} p_{-i} + (-1)^{i-1} p_{-i+1}
                &
                &= (-1)^{i+1}a_{-i} p_{-i} + (-1)^i q_{-i+1}\\
                &= (-1)^{i+1}(-a_{-i} p_{-i} +  p_{-i+1})
                &
                                &= (-1)^{i+2}(-a_{-i} q_{-i} +  q_{-i+1})\\
                &= (-1)^{i+1}p_{-i-1}
                &
                               &= (-1)^{i+2}q_{-i-1}.
\end{align*}
The following properties about the convergents of $\Delta$ are well known
\begin{gather}
    \frac{p_i}{q_i}         =[a_0,\dots,a_{i-1}],\label{eq:Convergents Fact 1}\\
     p_iq_{i-1} - p_{i-1}q_i  =(-1)^i,\label{eq:Convergents Fact 2}\\
    \frac{p_{2i+1}}{q_{2i+1}}-\Delta<0<\frac{p_{2i}}{q_{2i}}-\Delta\quad \text{or}\quad p_{2i+1}-q_{2i+1}\Delta<0<p_{2i}-q_{2i}\Delta,\text{ and}\label{eq:Convergents Fact 3}\\
    \frac{1}{(a_{i}+2)q_i^2}<\Big|\frac{p_i}{q_i}-\Delta\Big|<\frac{1}{a_{i}q_i^2}\quad \text{or}\quad
    \frac{1}{(a_{i}+2)|q_i|}<|p_{i}-q_{i}\Delta|<\frac{1}{a_{i}|q_i|}.\label{eq:Convergents Fact 4}
\end{gather}
    
Since $\widetilde p_i/\widetilde q_i=-p_{-i}/q_{-i}$ we may replace $-\Delta$ with $-\widetilde\Delta$ in the last two facts when $i<0$.

\begin{remark}
When looking at the next proposition it is helpful to know that a \emph{semiconvergent} of $\Delta=[a_0,a_1,a_2,\dots]$ is a fraction of the form
\[
\frac{jp_i+p_{i-1}}{jq_i+q_{i-1}}
\]
for some $0<j<a_i$. These $a_i-1$ number form a strictly monotone sequence.
\end{remark}

The following proposition relates the sequences $(a_i)_{i\in\Z}$, $(p_i)_{i\in\Z}$, and $(q_i)_{i\in\Z}$ to the boxes in $\mathcal B$. The connection between continued fractions and the maximal empty axis-parallel boxes amidst $\Lambda$ allows us to use the rich theory of the former to study the latter. The main idea is that if $(p-q\Delta,p-q\widetilde\Delta)$ bounds $B_n\in\mathcal B$ on either the left or the right and $q>0$, then there can be no point with a smaller value of $q$ whose first coordinate has both the same sign and is smaller than $p-q\Delta$. This means that, for $n>0$, the points bounding $B_n$ should correspond to either convergents or semiconvergents of $\Delta$.  The proof simply demonstrates that the upward and downward box progression rules in \eqref{eq:Going Up} and \eqref{eq:Going Down} are just the continued fraction algorithm in disguise.

\begin{notation}
Given a sequence $(a_i)_{i\in\Z}$ we define $A_i=\sum_{k=0}^{i-1}a_{k}$ when $i\geq0$ and $A_i=-\sum_{k=i}^{-1}a_k$ when $i<0$.
\end{notation}
\begin{prop}\label{prop:BoxesAreSemiConvergents}
Let $\Lambda$ be a lattice as defined above with the accompanying framework.  Writing $n\in\Z$ as $n=A_i+j$, where $0\leq j<a_i$, then the left and right sides of the $n$-th box in $\mathcal B$ are given by
\begin{align*}
    \alpha_n&=\begin{cases}
     p_{i}-q_{i}\Delta  &\text{if } i \text{ is odd}, \\
    j(p_{i}-q_{i}\Delta) + (p_{i-1}-q_{i-1}\Delta)  &\text{if } i \text{ is even},
    \end{cases}\\
    \beta_n&=\begin{cases}
    j(p_{i}-q_{i}\Delta) + (p_{i-1}-q_{i-1}\Delta)  &\text{if } i \text{ is odd},\\
    p_{i}-q_{i}\Delta  &\text{if } i \text{ is even}.
    \end{cases}
\end{align*}

\end{prop}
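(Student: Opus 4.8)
The plan is to prove the statement by induction on $n$, tracking how the box-progression rules \eqref{eq:Going Up} and \eqref{eq:FormulaForBetaNPlus1} interact with the continued fraction recurrences for $p_i$ and $q_i$. The base case is $n=0$, where $A_0=0$ and $j=0$, so $i=0$ (even): the formulas give $\alpha_0 = p_{-1}-q_{-1}\Delta = 0 - 1\cdot\Delta = -\Delta$ and $\beta_0 = p_0 - q_0\Delta = 1 - 0 = 1$, matching $B_0=(-\Delta,1)\times(0,1-\widetilde\Delta)$ from Proposition \ref{prop:NBAequivmatrices}. I would verify this directly against the initial values $p_{-1}=0,\,p_0=1,\,q_{-1}=1,\,q_0=0$.

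For the inductive step, first I would confirm the crucial sign pattern. The recurrence \eqref{eq:Going Up} branches on the sign of $\alpha_n+\beta_n$; using Fact \eqref{eq:Convergents Fact 3}, the quantity $p_i - q_i\Delta$ is negative when $i$ is odd and positive when $i$ is even. So I would first show that, within a block $n = A_i + j$ with $0\le j < a_i$, the top point $\alpha_n+\beta_n = (j+1)(p_i - q_i\Delta) + (p_{i-1}-q_{i-1}\Delta)$ keeps a constant sign governed solely by the parity of $i$, and that this sign matches the relevant branch of \eqref{eq:Going Up}. This reduces the proof to two cases. Suppose $i$ is odd (so $\alpha_n+\beta_n<0$, selecting the branches $\alpha_{n+1}=\alpha_n+\beta_n$, $\beta_{n+1}=\beta_n$). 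Within the block, $\alpha_n = p_i - q_i\Delta$ is fixed and $\beta_n = j(p_i-q_i\Delta)+(p_{i-1}-q_{i-1}\Delta)$; advancing by one increments $j$ to $j+1$, which is exactly $\beta_{n+1}=\alpha_n+\beta_n$, consistent with the claimed formula, while $\alpha$ is unchanged. The even case is symmetric. The key algebraic point is the block transition: when $j$ reaches $a_i - 1$ and we step to $n = A_{i+1}$ (with new index $i+1$, $j=0$), I must check that the semiconvergent formula collapses to the next convergent. Concretely, when $i$ is odd and $j=a_i-1$, the new $\beta$ becomes $a_i(p_i-q_i\Delta)+(p_{i-1}-q_{i-1}\Delta) = p_{i+1}-q_{i+1}\Delta$ by the recurrence $p_{i+1}=a_i p_i + p_{i-1}$, which is precisely the value the formula assigns at $i+1$ (even) with $j=0$.

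I would present the downward direction ($n<0$) by the analogous induction using \eqref{eq:Going Down}, or more economically by invoking the symmetry between $\Delta$ and $-\widetilde\Delta$ noted in the text (the relations $\widetilde p_i = (-1)^i p_{-i}$, $\widetilde q_i = (-1)^{i+1}q_{-i}$), which maps the downward recurrence onto the upward one. The main obstacle I anticipate is the bookkeeping at the block boundaries: one must carefully verify that the parity of $i$ flips exactly when crossing from $n=A_i+(a_i-1)$ to $n=A_{i+1}$, that the role of $\alpha$ and $\beta$ (which side is the convergent and which is the semiconvergent) swaps correctly with parity, and that the sign conditions driving the case split in \eqref{eq:Going Up} genuinely hold throughout each block — this is where Fact \eqref{eq:Convergents Fact 3} does the heavy lifting. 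Beyond these sign and index verifications, the computation is a routine substitution into the linear recurrences for $p_i$ and $q_i$.
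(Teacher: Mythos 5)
Your overall strategy is exactly the paper's: induct on $n$, use Fact \eqref{eq:Convergents Fact 3} to pin down the sign of $\alpha_n+\beta_n$ on each block, match that sign to the branch of \eqref{eq:Going Up}, and observe that the block transition at $j=a_i-1$ collapses the semiconvergent to the next convergent via $p_{i+1}=a_ip_i+p_{i-1}$. The base case, the block-transition computation, and the treatment of $n<0$ (either by the analogous downward induction or by the $\widetilde p_i=(-1)^ip_{-i}$ symmetry) are all fine.

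However, there is a concrete sign error at the crucial step, and as written it makes your argument contradict itself. You assert that $i$ odd forces $\alpha_n+\beta_n<0$, selecting the branch $\alpha_{n+1}=\alpha_n+\beta_n$, $\beta_{n+1}=\beta_n$ — but then you describe $\alpha$ as staying fixed and $\beta$ as absorbing $\alpha_n+\beta_n$, which is the \emph{other} branch. The correct sign is the opposite of what you state: for $i$ odd, $p_i-q_i\Delta<0$ and $0\le j<a_i$ give
\[
\alpha_n+\beta_n=(j+1)(p_i-q_i\Delta)+(p_{i-1}-q_{i-1}\Delta)\;\ge\;a_i(p_i-q_i\Delta)+(p_{i-1}-q_{i-1}\Delta)=p_{i+1}-q_{i+1}\Delta>0,
\]
since $i+1$ is even; symmetrically, for $i$ even one gets $\alpha_n+\beta_n\le p_{i+1}-q_{i+1}\Delta<0$. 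So odd $i$ corresponds to the top point lying to the \emph{right} of the $y$-axis ($\alpha$ unchanged, $\beta\mapsto\alpha_n+\beta_n$), and even $i$ to the left. With the two signs swapped back, your branch selection agrees with the update you actually perform and with the claimed formulas (where the side equal to the plain convergent $p_i-q_i\Delta$ is the one that stays fixed throughout the block), and the proof goes through exactly as in the paper. Do carry out the displayed inequality rather than just asserting the sign: it is the one place where the bound $j<a_i$ and the alternation of \eqref{eq:Convergents Fact 3} genuinely interact.
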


\begin{proof}
Let $n=A_i+j$ where $0\leq j < a_i$ as in the statement of the proposition.
We prove the statement first for $n\geq0$. Let $\alpha_n'$ and $\beta_n'$ be the claimed values for $\alpha_n$ and $\beta_n$ respectively. Then $\alpha_{n+1}'$ and $\beta_{n+1}'$ satisfy
\begin{align*}
    \alpha_{n+1}' &=\begin{cases}
    \alpha_n'            &\text{if } i \text{ is odd},\\
    \alpha_n'+\beta_n'    &\text{if } i \text{ is even},
    \end{cases}\\
   \beta_{n+1}' &=\begin{cases}
    \alpha_n'+\beta_n'     &\text{if } i \text{ is odd},\\
    \beta_n'              &\text{if } i \text{ is even},
    \end{cases}
\end{align*}
when $0\leq j<a_i$. The statement is true for $n=0$ by definition. To prove the result by induction, it is sufficient to show that if $0<j<a_i$, then $\alpha_n'+\beta_n'>0$ when $i$ is odd and $\alpha_n'+\beta_n'<0$ when $i$ is even. 
Whenever $i$ is odd, we see by \eqref{eq:Convergents Fact 3} that $p_{i}-q_{i}\Delta<0$ and 
\begin{align*}
    \alpha_n'+\beta_n'&=(j+1)(p_{i}-q_{i}\Delta)+p_{i-1}-q_{i-1}\Delta\\
    &\ge a_i(p_{i}-q_{i}\Delta)+p_{i-1}-q_{i-1}\Delta\\
    &=p_{i+1}-q_{i+1}\Delta>0,
\end{align*}
on the other hand, when $i$ is even, $p_{i}-q_{i}\Delta>0$ and
\begin{align*}
\alpha_n'+\beta_n'&=(j+1)(p_{i}-q_{i}\Delta)+p_{i-1}-q_{i-1}\Delta\\
    &\le a_i(p_{i}-q_{i}\Delta)+p_{i-1}-q_{i-1}\Delta\\
    &=p_{i+1}-q_{i+1}\Delta<0.
\end{align*}

Now assume $n<0$. Let $\alpha_n'$ and $\beta_n'$ be again the claimed values for $\alpha_n$ and $\beta_n$ respectively. Then $\alpha_{n-1}'$ and $\beta_{n-1}'$ satisfy
\begin{align*}
    \alpha_{n-1}' &=\begin{cases}
    \alpha_n'            &\text{if } i \text{ is odd},\\
    \alpha_n' - \beta_{n-1}'  = \alpha_n' - \beta_n'   &\text{if } i \text{ is even},
    \end{cases}\\
   \beta_{n-1}' &=\begin{cases}
    \beta_n' - \alpha_{n-1}'= \beta_n' - \alpha_n'     &\text{if } i \text{ is odd},\\
    \beta_n'            &\text{if } i \text{ is even},
    \end{cases}
\end{align*}
when $0<j\leq a_{i-1}$. The inequalities bounding $j$ are switched since we are going down and thus starting at $A_{i-1}$, the statement follows from going down to $A_i$ and then going back up. The statement is true for $n=0$ by definition. To prove the result by induction, it is sufficient to show that if $0<j\leq a_{i-1}$, then $\widetilde\alpha_n'-\widetilde\beta_n'>0$ when $i$ is even and $\widetilde\beta_n'-\widetilde\alpha_n'>0$ when $i$ is odd. Note that we always have $(p_{i}-q_{i}\widetilde\Delta)>0$.
Whenever $i$ is even, we get 
\begin{align*}
    \widetilde \alpha_n'-\widetilde \beta_n'&=(j-1)(p_{i}-q_{i}\widetilde\Delta)+p_{i-1}-q_{i-1}\widetilde\Delta>p_{i-1}-q_{i-1}\widetilde\Delta>0,
\end{align*}
on the other hand, when $i$ is odd, we get
\begin{align*}
    \widetilde \beta_n'-\widetilde         \alpha_n'&=(j-1)(p_{i}-q_{i}\widetilde\Delta)+p_{i-1}-q_{i-1}\widetilde\Delta>(p_{i-1}-q_{i-1}\widetilde\Delta)>0 \qedhere
\end{align*} 
\end{proof}

Given a two-sided sequence of natural numbers $\mathfrak{A}=(a_i)_{i\in\Z}$ we obtain an irrational lattice by setting $\Delta=[a_0,a_1,a_2\dots]$ and $-\widetilde\Delta=[0,a_{-1},a_{-2},\dots]$. The next couple lemmas tell what happens when we re-index the sequence.

\begin{notation}
Given $\Delta=[a_0,a_1,a_2\dots]$ and $-\widetilde\Delta=[0,a_{-1},a_{-2},\dots]$ with $a_i\in\N$ we define $\Delta_i$ and $-\widetilde\Delta_i$ as $\Delta_i=[a_i,a_{i+1},\dots]$ and $-\widetilde\Delta_i=[0,a_{i-1},a_{i-2},\dots]$.
\end{notation}

\begin{lemma}\label{lem: Delta i}
Let $\Delta=[a_0,a_1,a_2,\dots]$ and $-\widetilde\Delta = [0,a_{-1},a_{-2}\dots]$ with $a_i\geq1$.  Then
\[
-\Delta_i=-\frac{-p_{i-1}+q_{i-1}\Delta}{p_{i}-q_{i}\Delta}=\frac{p_{i-1}-q_{i-1}\Delta}{p_{i}-q_{i}\Delta}
\quad\text{and}\quad
-\widetilde\Delta_i=\frac{p_{i-1}-q_{i-1}\widetilde\Delta}{ p_{i}-q_{i}\widetilde\Delta}.
\]
\end{lemma}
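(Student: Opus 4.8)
The plan is to prove both identities by a single induction on $i$, exploiting the fact that the two formulas have the same shape: the second is the first with $\Delta$ replaced by $\widetilde\Delta$. The base case in each is $i=0$, where the right-hand sides collapse using $p_{-1}=0$, $p_0=1$, $q_{-1}=1$, $q_0=0$ to $\frac{-q_{-1}\Delta}{p_0}=-\Delta=-\Delta_0$ and $\frac{-q_{-1}\widetilde\Delta}{p_0}=-\widetilde\Delta=-\widetilde\Delta_0$, both of which hold by definition. The inductive engines are the one-step continued-fraction relations together with the three-term recurrences defining $(p_i)_{i\in\Z}$ and $(q_i)_{i\in\Z}$.

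For the first identity I would set $\nu_i=-\Delta_i$ and note that $\Delta_i=[a_i,a_{i+1},\dots]=a_i+1/\Delta_{i+1}$ rearranges to $\nu_{i+1}=\tfrac{1}{a_i+\nu_i}$. Substituting the inductive hypothesis $\nu_i=\frac{p_{i-1}-q_{i-1}\Delta}{p_i-q_i\Delta}$ and clearing denominators, the new denominator becomes $a_i(p_i-q_i\Delta)+(p_{i-1}-q_{i-1}\Delta)$, which the recurrences $p_{i+1}=a_ip_i+p_{i-1}$ and $q_{i+1}=a_iq_i+q_{i-1}$ collapse to $p_{i+1}-q_{i+1}\Delta$. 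This yields $\nu_{i+1}=\frac{p_i-q_i\Delta}{p_{i+1}-q_{i+1}\Delta}$, the claimed formula at $i+1$. Since the step is a reversible algebraic manipulation, running it backward establishes the formula for negative $i$ as well.

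For the second identity I would observe that $-\widetilde\Delta_i=[0,a_{i-1},a_{i-2},\dots]$ satisfies the one-step relation $-\widetilde\Delta_i=\frac{1}{a_{i-1}+(-\widetilde\Delta_{i-1})}$, so here the induction naturally steps downward from $i$ to $i-1$. Writing $\mu_i=-\widetilde\Delta_i$ and substituting $\mu_i=\frac{p_{i-1}-q_{i-1}\widetilde\Delta}{p_i-q_i\widetilde\Delta}$ gives $\mu_{i-1}=\frac{1}{\mu_i}-a_{i-1}=\frac{(p_i-q_i\widetilde\Delta)-a_{i-1}(p_{i-1}-q_{i-1}\widetilde\Delta)}{p_{i-1}-q_{i-1}\widetilde\Delta}$, and the recurrences $p_i=a_{i-1}p_{i-1}+p_{i-2}$, $q_i=a_{i-1}q_{i-1}+q_{i-2}$ reduce the numerator to $p_{i-2}-q_{i-2}\widetilde\Delta$. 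This is exactly the formula at $i-1$, completing the downward induction; the upward direction follows by reversing the same step.

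The computations are entirely routine, so I expect no serious obstacle; the only point demanding care is the two-sided indexing. Because $(a_i)_{i\in\Z}$, and hence $(p_i)$, $(q_i)$, $\Delta_i$, and $\widetilde\Delta_i$, are defined for all integers $i$, I must confirm that the one-step relations $\Delta_i=a_i+1/\Delta_{i+1}$ and $-\widetilde\Delta_i=1/(a_{i-1}+(-\widetilde\Delta_{i-1}))$ hold for every $i\in\Z$ (they do, straight from the definitions of $\Delta_i$ and $\widetilde\Delta_i$), and then run each induction in both directions from the base case $i=0$. As an alternative for the second identity one could instead invoke $\widetilde p_i=(-1)^ip_{-i}$ and $\widetilde q_i=(-1)^{i+1}q_{-i}$ to transfer the first identity over to $-\widetilde\Delta$, but this trades the clean induction above for heavier sign bookkeeping.
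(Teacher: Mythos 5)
Your proof is correct and follows essentially the same route as the paper: a two-directional induction on $i$ anchored at $i=0$, driven by the one-step continued-fraction relations and the three-term recurrences for $(p_i)$ and $(q_i)$ (the paper writes out only the $-\widetilde\Delta_i$ case, treating $-\Delta_i$ as standard, whereas you handle both symmetrically). No gaps.
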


\begin{proof}
We are proving the assertion for $-\widetilde\Delta_i$, since the case of $-\Delta_i$ is widely known and works out similar.
The claimed equality is obviously true for $i=0$. Assuming this equality is true for a given $i$ we show that they are then also true for $i+1$ and $i-1$.
\begin{align*}
-\widetilde\Delta_{i+1}&=[0,a_{i+1}, a_{i},a_{i-1},\dots]
&
 -\widetilde\Delta_{i-1}&=[0,a_{i-2}, a_{i-3},\dots]\\
                &=\frac{1}{[a_{i+1},a_{i}, a_{i-1},\dots]}
                &
                &=[a_{i-1},a_{i-2}, a_{i-3},\dots]-a_{i-1}\\
                &=\frac{1}{[0,a_{i}, a_{i-1},\dots]+a_{i+1}}
                &
                &=\frac{1}{[0, a_{i-1},a_{i-2},a_{i-3},\dots]}-a_{i-1}\\
                &=\frac{1}{-\widetilde\Delta_i+a_{i+1}}
                &
                &=\frac{1}{-\widetilde\Delta_{i}}-a_{i-1}\\
                &=\frac{1}{\frac{p_{i-1}-q_{i-1}\widetilde\Delta}{ p_{i}-q_{i}\widetilde\Delta}+\frac{a_i(p_{i}-q_{i}\widetilde\Delta)}{ p_{i}-q_{i}\widetilde\Delta}}
                &
                &=\frac{p_{i}-q_{i}\widetilde\Delta}{p_{i-1}-q_{i-1}\widetilde\Delta}-\frac{a_i( p_{i-1}-q_{i-1}\widetilde\Delta)}{p_{i-1}-q_{i-1}\widetilde\Delta}\\
                &=\frac{1}{\frac{p_{i+1}-q_{i+1}\widetilde\Delta}{ p_{i}-q_{i}\widetilde\Delta}}
                &
                &=\frac{p_{i-2}-q_{i-2}\widetilde\Delta}{p_{i-1}-q_{i-1}\widetilde\Delta}\\
                &=\frac{ p_{i}-q_{i}\widetilde\Delta}{p_{i+1}-q_{i+1}\widetilde\Delta}
\end{align*}
In both cases we used the induction hypothesis on the $4$th line.
\end{proof}

\begin{notation}
Let $\Lambda$ be an irrational lattice as defined above with the accompanying framework. For each $i\in\Z$, we define $\Gamma_i$ to be the lattice with generating matrix
\[
\begin{pmatrix}
1   &-\Delta_i\\
1   &-\widetilde\Delta_i
\end{pmatrix}
\]
and $\mathcal C=\{C_n:n\in\Z\}$ to be the set of maximal empty axis-parallel boxes amidst $\Gamma_i$ that are bounded by $(0,0)$ on the bottom and ordered according to height so that $C_0=(-\Delta_i,1)\times(0,1-\widetilde\Delta_i)$.
\end{notation}

\begin{lemma}\label{lem:Can start anywhere}
Let $\Lambda$ be an irrational lattice as defined above with the accompanying framework with $\Gamma_i$ and $\mathcal C$ defined as above.  Then for all $j\in\Z$,
\[
\frac{\vol(C_{n-A_i})}{\det(\Gamma_i)}=\frac{\vol(B_{n})}{\det(\Lambda)}.
\]
In particular $\Lambda$ and $\Gamma_i$ have the same normalized dispersion.
\end{lemma}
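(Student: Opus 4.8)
The plan is to show that the lattice $\Gamma_i$ is essentially $\Lambda$ re-coordinatized so that the box $B_{A_i}$ plays the role of the base box $B_0$. Concretely, I would first invoke Lemma \ref{lem: Delta i}, which expresses $-\Delta_i$ and $-\widetilde\Delta_i$ as the ratios $(p_{i-1}-q_{i-1}\Delta)/(p_i-q_i\Delta)$ and $(p_{i-1}-q_{i-1}\widetilde\Delta)/(p_i-q_i\widetilde\Delta)$. The key observation is that these ratios are exactly what one obtains by rescaling the box $B_{A_i}$ of $\Lambda$ to have the normalized form demanded by Proposition \ref{prop:NBAequivmatrices}. By Proposition \ref{prop:BoxesAreSemiConvergents} applied at $n = A_i$ (so $j=0$), the sides of $B_{A_i}$ are $\alpha_{A_i} = p_{i-1}-q_{i-1}\Delta$ and $\beta_{A_i} = p_i - q_i\Delta$ (up to swapping left/right according to the parity of $i$), and Lemma \ref{lem:TopPointIsDetermined} then tells me that $\Lambda$ is generated by the matrix whose columns are these two bounding points.

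The main step is to exhibit an explicit NBA equivalence between $\Lambda$ and $\Gamma_i$ that carries $B_{A_i}$ to $C_0$. Following the recipe in the proof of Lemma \ref{lem:EquivalentLattice}, I would take the generating matrix of $\Lambda$ coming from the bounding points of $B_{A_i}$ and left-multiply by the diagonal matrix $\diag\!\big((p_i-q_i\Delta)^{-1}, (p_i-q_i\widetilde\Delta)^{-1}\big)$. Using Lemma \ref{lem: Delta i}, the resulting matrix is precisely
\[
\begin{pmatrix}
1 & -\Delta_i\\
1 & -\widetilde\Delta_i
\end{pmatrix},
\]
which generates $\Gamma_i$; a parity-dependent column swap (a $\GL_2(\Z)$ operation) may be needed so that the signs match the standing convention $\Delta_i>1$, $-1<\widetilde\Delta_i<0$. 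By Lemma \ref{lem:NBAequivoperations}, this NBA equivalence preserves normalized dispersion, giving $\disp(\Gamma_i)/\det(\Gamma_i) = \disp(\Lambda)/\det(\Lambda)$, which is the ``in particular'' clause.

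For the stronger box-by-box statement, I would argue that the diagonal scaling establishes a bijection between the boxes of $\mathcal B$ and those of $\mathcal C$ that sends $B_{A_i}$ to $C_0$ and, more generally, $B_n$ to $C_{n-A_i}$: the scaling is a positive diagonal linear map, so it sends maximal empty axis-parallel boxes bounded below by the origin to maximal empty axis-parallel boxes of the same type and preserves their height-ordering, hence their indexing. Since the map multiplies the area of any box by the factor $\big((p_i-q_i\Delta)(p_i-q_i\widetilde\Delta)\big)^{-1}$ and multiplies $\det(\Lambda)$ by the same factor, the normalized volume $\vol(B_n)/\det(\Lambda)$ is unchanged, yielding $\vol(C_{n-A_i})/\det(\Gamma_i) = \vol(B_n)/\det(\Lambda)$.

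The step I expect to be the main obstacle is the bookkeeping around parity: when $i$ is even versus odd, Proposition \ref{prop:BoxesAreSemiConvergents} assigns the convergent $p_i-q_i\Delta$ to the right versus the left side of the box, and the signs of $p_i-q_i\Delta$ (negative for odd $i$, positive for even $i$ by \eqref{eq:Convergents Fact 3}) determine whether a column swap and a sign adjustment via $\diag$ are needed to land in the canonical form. Verifying that the diagonal scaling factors $(p_i-q_i\Delta)^{-1}$ and $(p_i-q_i\widetilde\Delta)^{-1}$ have consistent signs (so that $ab\neq 0$ and the box orientation is preserved) in each parity case is routine but must be done carefully, and it is the only place where the clean correspondence could break down.
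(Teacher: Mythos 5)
Your proposal is correct and rests on the same key identity as the paper's proof: the generating matrix of $\Gamma_i$ equals $\diag(p_i-q_i\Delta,\,p_i-q_i\widetilde\Delta)^{-1}$ times an NBA-equivalent generating matrix of $\Lambda$, and both the box volumes and the determinant scale by the same factor under this map. The only difference is organizational --- the paper verifies the correspondence for every box $B_n$ by writing its bounding points as an explicit product of the matrices $\bigl(\begin{smallmatrix} a_r & 1\\ 1 & 0\end{smallmatrix}\bigr)$, whereas you verify it once at $n=A_i$ and transfer it to all $n$ via the order-preserving bijection that the diagonal map induces between $\mathcal B$ and $\mathcal C$ (your parity and sign bookkeeping, including the negative first diagonal entry when $i$ is odd, does work out since the second diagonal entry $(p_i-q_i\widetilde\Delta)^{-1}$ is always positive, so height order is preserved).
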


\begin{proof}
Observe that $(1,1)=(p_0-q_0\Delta,p_0-q_0\widetilde\Delta)$ and $(-\Delta,-\widetilde\Delta)=(p_{-1}-q_{-1}\Delta,p_{-1}-q_{-1}\widetilde\Delta)$. Let $n=A_k+j$ with $0\leq j<a_k$. One can easily verify 
\[
\begin{pmatrix}
  j(p_{k}-q_{k}\Delta) + (p_{k-1}-q_{k-1}\Delta) & p_{k}-q_{k}\Delta \\
  j(p_{k}-q_{k}\widetilde\Delta) + (p_{k-1}-q_{i-1}\widetilde\Delta) & p_{k}-q_{k}\widetilde\Delta
\end{pmatrix}=\begin{cases}
\begin{pmatrix}
  1  &   -\Delta\\
 1  &   -\widetilde\Delta
\end{pmatrix}
\Bigg[
\prod_{r=0}^{k-1}
\begin{pmatrix}
  a_r  &   1\\
 1  &   0
\end{pmatrix}
\Bigg]
\begin{pmatrix}
  j  &   1\\
 1  &   0
\end{pmatrix}
&\text{if } k\geq0, \\[12pt]
\begin{pmatrix}
  1  &   -\Delta\\
 1  &   -\widetilde\Delta
\end{pmatrix}
\Bigg[
\prod_{r=k}^{-1}
\begin{pmatrix}
  a_r  &   1\\
 1  &   0
\end{pmatrix}
\Bigg]^{-1}
\begin{pmatrix}
  j  &   1\\
 1  &   0
\end{pmatrix}
&\text{if } k<0.
\end{cases}
\]
If $B=(b_{s,t})$ is the above matrix, then
$
\vol(B_{n})=|b_{1,1}-b_{1,2}|(b_{2,1}+b_{2,2}).
$
Similarly the matrix
\[
C:=
% \begin{pmatrix}
%  p_{k}-q_{k}\Delta & j(p_{k}-q_{k}\Delta) + (p_{k-1}-q_{k-1}\Delta)\\
%  p_{k}-q_{k}\widetilde\Delta & j(p_{k}-q_{k}\widetilde\Delta) + (p_{k-1}-q_{i-1}\widetilde\Delta)
%\end{pmatrix}
\begin{cases}
\begin{pmatrix}
  1  &   -\Delta_i\\
 1  &   -\widetilde\Delta_i
\end{pmatrix}
\Bigg[
\prod_{r=i}^{k-1}
\begin{pmatrix}
  a_r  &   1\\
 1  &   0
\end{pmatrix}
\Bigg]
\begin{pmatrix}
  j  &   1\\
 1  &   0
\end{pmatrix}
&\text{if } i\leq k, \\[12pt]
\begin{pmatrix}
  1  &   -\Delta_i\\
 1  &   -\widetilde\Delta_i
\end{pmatrix}
\Bigg[
\prod_{r=k}^{i-1}
\begin{pmatrix}
  a_r  &   1\\
 1  &   0
\end{pmatrix}
\Bigg]^{-1}
\begin{pmatrix}
  j  &   1\\
 1  &   0
\end{pmatrix}
&\text{if } k<i
\end{cases}
\]
satisfies 
$
\vol(C_{n-A_i})=|c_{1,1}-c_{1,2}|(c_{2,1}+c_{2,2}).
$
But
\[
\begin{pmatrix}
  1  &   -\Delta_i\\
 1  &   -\widetilde\Delta_i
\end{pmatrix}=\begin{cases}
\begin{pmatrix}
 p_{i}-q_{i}\Delta  &0\\
 0          & p_{i}-q_{i}\widetilde\Delta
\end{pmatrix}^{-1}
\begin{pmatrix}
  1  &   -\Delta\\
 1  &   -\widetilde\Delta
\end{pmatrix}
\prod_{r=0}^{i-1}
\begin{pmatrix}
  a_r  &   1\\
 1  &   0
\end{pmatrix}
&\text{if } i\geq0, \\[12pt]
\begin{pmatrix}
 p_{i}-q_{i}\Delta  &0\\
 0          & p_{i}-q_{i}\widetilde\Delta
\end{pmatrix}^{-1}
\begin{pmatrix}
  1  &   -\Delta\\
 1  &   -\widetilde\Delta
\end{pmatrix}
\Bigg[
\prod_{r=i}^{-1}
\begin{pmatrix}
  a_r  &   1\\
 1  &   0
\end{pmatrix}
\Bigg]^{-1}
&\text{if } i<0
\end{cases}
\]
so that $C=\diag(p_{i}-q_{i}\Delta,p_{i}-q_{i}\widetilde\Delta)^{-1}B$ from which the lemma easily follows.
\end{proof}

% \begin{proof}
% By sequentially adding integer multiples of one column of the generating matrix for $\Lambda$ to another we see that $\Lambda$ is generated, up to a column swap, by the matrix
% \[
% \begin{pmatrix}
% p_i-q_i\Delta   &   p_{i-1}-q_{i-1}\Delta \\
% p_i+q_i\widetilde\Delta  &   p_{i-1}+q_{i-1}\widetilde\Delta
% \end{pmatrix}
% \quad\text{which then becomes}\quad
% \begin{pmatrix}
%   1  &   -\Delta_i\\
%  1  &   \widetilde\Delta_i
% \end{pmatrix}
% \]
% when multiplied on the right by $\diag(p_i-q_i\Delta,p_i+q_i\widetilde\Delta)^{-1}$. The columns of the first matrix represent the right and left bounding points of $B_{A_i}$ while the columns of the second matrix represents the right and left bounding points of $C_0$. It follows that  
% \[
% \vol(C_0)\frac{\vol(B_{A_i})}{(p_i-q_i\Delta)(p_i+q_i\widetilde\Delta)}\quad\text{and}\quad
% \det(\Gamma)=\frac{\det(\Lambda)}{(p_i-q_i\Delta)(p_i+q_i\widetilde\Delta)}.
% \]
% \textbf{Complete the proof}
% \end{proof}

Because of the above lemma, for the purposes of normalized dispersion it makes sense to think of a lattice as a sequence $(a_i)_{i\in\Z}$ which can be re-indexed as needed. The next lemma shows that we can also reverse it, We leave the proof as a small exercise to the reader.

\begin{lemma}
Let $(a_i)_{i\in\Z}$ be the sequence associated to the irrational lattice $\Lambda$. Then the sequence $(a_{-i})_{i\in\Z}$ is associated to the lattice with generating matrix
\[
\begin{pmatrix}
1   &   \widetilde\Delta-a_0\\
1   &   \Delta-a_0
\end{pmatrix}.
\]
\end{lemma}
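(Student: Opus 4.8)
The plan is to unwind the definition of ``the sequence associated to a lattice'' and simply compute the two relevant continued fractions for the reversed sequence. Writing $b_i=a_{-i}$, by definition the lattice associated to $(b_i)_{i\in\Z}$ has generators $\Delta'=[b_0,b_1,b_2,\dots]=[a_0,a_{-1},a_{-2},\dots]$ and $-\widetilde\Delta'=[0,b_{-1},b_{-2},\dots]=[0,a_1,a_2,\dots]$, and its generating matrix is
\[
\begin{pmatrix}
1 & -\Delta'\\
1 & -\widetilde\Delta'
\end{pmatrix}.
\]
So the whole task reduces to proving the two scalar identities $-\Delta'=\widetilde\Delta-a_0$ and $-\widetilde\Delta'=\Delta-a_0$.

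For the first identity I would peel off the leading term of the reversed forward expansion: $\Delta'=a_0+1/[a_{-1},a_{-2},\dots]$. The tail $[a_{-1},a_{-2},\dots]$ is exactly the reciprocal of $-\widetilde\Delta=[0,a_{-1},a_{-2},\dots]$, so $[a_{-1},a_{-2},\dots]=-1/\widetilde\Delta$ and hence $\Delta'=a_0-\widetilde\Delta$, i.e.\ $-\Delta'=\widetilde\Delta-a_0$. For the second, $-\widetilde\Delta'=[0,a_1,a_2,\dots]=1/[a_1,a_2,\dots]$; since $\Delta=a_0+1/[a_1,a_2,\dots]$ we get $[a_1,a_2,\dots]=1/(\Delta-a_0)$ and therefore $-\widetilde\Delta'=\Delta-a_0$. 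Substituting both into the matrix above yields precisely the claimed generating matrix.

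To be complete I would then check that $(a_{-i})_{i\in\Z}$ is a legitimate associated sequence, i.e.\ that its generators obey the standing conventions $\Delta'>1$ and $-1<\widetilde\Delta'<0$. Since $a_0\ge1$ and $-1<\widetilde\Delta<0$ we have $\Delta'=a_0-\widetilde\Delta\in(a_0,a_0+1)\subset(1,\infty)$, while $\Delta-a_0\in(0,1)$ (the forward fractional part, as $a_1\ge1$ makes $[a_1,a_2,\dots]>1$) forces $\widetilde\Delta'=-(\Delta-a_0)\in(-1,0)$.

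I do not expect any genuine obstacle: the argument is pure bookkeeping with the continued fraction notation, and the only point requiring care is matching up which one-sided tail of $(a_i)_{i\in\Z}$ becomes the forward expansion $\Delta'$ and which becomes the backward expansion $-\widetilde\Delta'$ after the index reversal $i\mapsto-i$.
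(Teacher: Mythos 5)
Your proof is correct, and since the paper leaves this lemma as an exercise, your unwinding of the definitions — identifying $-\Delta'=\widetilde\Delta-a_0$ and $-\widetilde\Delta'=\Delta-a_0$ by peeling off the leading coefficient, then verifying the standing conventions $\Delta'>1$ and $-1<\widetilde\Delta'<0$ — is exactly the intended argument. Nothing is missing.
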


\begin{lemma}\label{lem:Normalized Volume of a Box} Let $\Lambda$ be an irrational lattice with the accompanying framework and the notation as above, $i$ an arbitrary integer, and $0\leq j<a_i$. Then we have
\[
\frac{\vol(B_{A_i+j})}{\det(\Lambda)}=\frac{(1-j+\Delta_i)(1+j-\widetilde\Delta_i)}{\Delta_i-\widetilde\Delta_i}.
\]
Moreover this term, when considered as a function in $\Delta_i$, resp. $-\widetilde\Delta_i$, is increasing for $0<j<a_i$.
\end{lemma}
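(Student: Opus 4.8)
The plan is to reduce the computation to the base box of the re-indexed lattice $\Gamma_i$, read off the area, and then verify monotonicity by a one-variable differentiation. First I would invoke Lemma \ref{lem:Can start anywhere} with $n=A_i+j$. Since $n-A_i=j$, it gives
\[
\frac{\vol(B_{A_i+j})}{\det(\Lambda)}=\frac{\vol(C_{j})}{\det(\Gamma_i)},
\]
where the $C_n$ are the base-point boxes of $\Gamma_i$, the lattice generated by the matrix with entries $-\Delta_i$ and $-\widetilde\Delta_i$. This replaces the problem for an arbitrary box of $\Lambda$ by the problem for the $j$-th box of a lattice whose generators are again in the standard form, now with leading continued-fraction coefficient $a_i$.

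Next I would compute $C_j$ explicitly for $0\le j<a_i$. For $\Gamma_i$ the relevant index is $i'=0$ (even), with $p_0'=1$, $q_0'=0$, $p_{-1}'=0$, $q_{-1}'=1$, so Proposition \ref{prop:BoxesAreSemiConvergents} gives directly $\alpha^\Gamma_j=j-\Delta_i$, $\beta^\Gamma_j=1$ and $\widetilde\alpha^\Gamma_j=j-\widetilde\Delta_i$, $\widetilde\beta^\Gamma_j=1$. Equivalently, one sees this by starting from $C_0=(-\Delta_i,1)\times(0,1-\widetilde\Delta_i)$ and iterating the going-up rule \eqref{eq:Going Up}: since $\alpha^\Gamma_j=j-\Delta_i<0$ throughout $0\le j<a_i$ (because $\Delta_i>a_i>j$), the recurrence stays in the case $\alpha_n+\beta_n<0$, keeping the right point $(1,1)$ fixed and shifting the left point left by $1$ at each step. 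Hence $C_j=(j-\Delta_i,1)\times(0,1+j-\widetilde\Delta_i)$ has width $1-j+\Delta_i$ and height $1+j-\widetilde\Delta_i$. As $\det(\Gamma_i)=\Delta_i-\widetilde\Delta_i$, the displayed formula follows.

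For the monotonicity I would write $F=\frac{(1-j+\Delta_i)(1+j-\widetilde\Delta_i)}{\Delta_i-\widetilde\Delta_i}$ and differentiate. Treating $\widetilde\Delta_i$ and $j$ as fixed, the quotient rule gives
\[
\frac{\partial F}{\partial \Delta_i}=(1+j-\widetilde\Delta_i)\cdot\frac{j-1-\widetilde\Delta_i}{(\Delta_i-\widetilde\Delta_i)^2}.
\]
Since $-\widetilde\Delta_i\in(0,1)$ and $1+j-\widetilde\Delta_i>0$, the numerator $j-1-\widetilde\Delta_i$ is positive exactly when $j\ge 1$, i.e. $j>0$. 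Symmetrically, writing $y=-\widetilde\Delta_i$ and differentiating in $y$,
\[
\frac{\partial F}{\partial y}=(1-j+\Delta_i)\cdot\frac{\Delta_i-1-j}{(\Delta_i+y)^2},
\]
which is positive precisely when $\Delta_i>j+1$; as $\Delta_i=[a_i,a_{i+1},\dots]>a_i\ge j+1$ whenever $j<a_i$, this holds throughout that range. Thus $F$ increases in $\Delta_i$ when $j>0$ and in $-\widetilde\Delta_i$ when $j<a_i$, which together is exactly the claimed interval $0<j<a_i$.

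The argument is essentially routine once Lemma \ref{lem:Can start anywhere} is applied, so I do not anticipate a genuine obstacle. The only points needing care are the sign check $\alpha^\Gamma_j<0$ guaranteeing that the going-up recurrence stays in a single case, and the bookkeeping that matches the two one-sided derivative conditions to the two endpoints of $0<j<a_i$; it is pleasant that the two monotonicity directions correspond precisely to the two strict inequalities defining the range.
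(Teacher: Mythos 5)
Your proposal is correct and follows essentially the same route as the paper: reduce to the box $C_j$ of the re-indexed lattice $\Gamma_i$ via Lemma \ref{lem:Can start anywhere}, identify $C_j=(j-\Delta_i,1)\times(0,1+j-\widetilde\Delta_i)$ from Proposition \ref{prop:BoxesAreSemiConvergents}, and check monotonicity by differentiating (your explicit computation of the derivative in $-\widetilde\Delta_i$ is in fact more detailed than the paper's ``easily seen''). The only quibble is the phrase ``shifting the left point left by $1$'': since $\alpha_{n+1}=\alpha_n+1$, the left boundary moves to the \emph{right}, though this does not affect the resulting formula.
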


\begin{proof}
Let $\Gamma_i$ and $\mathcal C$ be as above. By Proposition \ref{prop:BoxesAreSemiConvergents}, $C_j=(j-\Delta_i,1)\times(0,1+j-\widetilde\Delta)$ for $0\leq j\leq a_0$. We apply  Lemma \ref{lem:Can start anywhere} to obtain
\[
\frac{\vol(B_{A_i+j})}{\det(\Lambda)}=\frac{\vol(C_j)}{\det(\Gamma_i)}=\frac{(1-j+\Delta_i)(1+j-\widetilde\Delta_i)}{\Delta_i-\widetilde\Delta_i}.
\]
The derivative of this with respect to $\Delta_i$ is
\begin{align*}
\frac{\partial}{\partial\Delta_i}\frac{(1-j+\Delta_i)(1+j-\widetilde\Delta_i)}{\Delta_i-\widetilde\Delta_i}
%&=\frac{(\Delta_i+\widetilde\Delta_i)(1+j+\widetilde\Delta_i)-(1-j+\Delta_i)(1+j+\widetilde\Delta_i)}{(\Delta_i+\widetilde\Delta_i)^2}\\
&= \frac{(j-1-\widetilde\Delta_i)(1+j-\widetilde\Delta_i)}{(\Delta_i-\widetilde\Delta_i)^2},
\end{align*}
which, for $\Delta_i>1$ and $-1<\widetilde\Delta_i<0$, is positive because $0< j$. Similarly, the derivative with respect to $\widetilde\Delta_i$ is easily seen to be positive since  $ j<a_i$.
\end{proof}
% \begin{proof}
% The lattice generated by the first matrix is also is generated by both
% \[
% \begin{pmatrix}
% p_{2k}-q_{2k}\Delta            &   p_{2k-1}-q_{2k-1}\Delta\\
% p_{2k}+q_{2k}\widetilde\Delta  &   p_{2k-1}+q_{2k-1}\widetilde\Delta
% \end{pmatrix}
% \text{ and }
% \begin{pmatrix}
% p_{2k}-q_{2k}\Delta            &   p_{2k+1}-q_{2k+1}\Delta\\
% p_{2k}+q_{2k}\widetilde\Delta  &   p_{2k+1}+q_{2k+1}\widetilde\Delta
% \end{pmatrix}
% \]
% for each $k\in \Z$, since these matrices are obtained from the canonical generating matrix by sequentially adding integer multiples of one column to another. The lattices generated by 
% \[
% \begin{pmatrix}
% 1            &   \frac{p_{2k-1}-q_{2k-1}\Delta}{p_{2k}-q_{2k}\Delta }\\
% 1  &  \frac{p_{2k-1}+q_{2k-1}\widetilde\Delta}{p_{2k}+q_{2k}\widetilde\Delta }
% \end{pmatrix}
% \text{ and }
% \begin{pmatrix}
% 1& \frac{p_{2k}-q_{2k}\Delta}{ p_{2k+1}-q_{2k+1}\Delta}          \\
% 1&\frac{p_{2k}+q_{2k}\widetilde\Delta}{ p_{2k+1}+q_{2k+1}\widetilde\Delta}        
% \end{pmatrix}
% \]
% have the same normalized dispersion as the lattice $\Lambda$ because they are obtained from $\Lambda$ by multiplying by a diagonal matrix and, in the second case, swapping the columns of the generating matrix. We apply Lemma \ref{lem: Delta i} with $i=2k$ or $i=2k+1$ to complete the proof.
% \end{proof}

The above lemma motivates the following definition.

\begin{defi}
Let $\mathfrak A=(a_i)_{i\in\Z}$ be a sequence of natural numbers. We define the dispersion of the sequence to be
\[
\disp(\mathfrak A)=\sup_{i\in\Z}\max_{0\leq j<a_i}\frac{(1-j+\Delta_i)(1+j-\widetilde\Delta_i)}{\Delta_i-\widetilde\Delta_i},
\]
where $\Delta_i=[a_i,a_{i+1},a_{i+2},\dots]$ and $-\widetilde\Delta_i=[0,a_{i-1},a_{i-2},\dots]$.
\end{defi}

The following proposition follows directly from the previous lemma. It shows that for the purposes of normalized dispersion, we may think of a lattice $\Lambda$ as a two-sided sequence that may be re-indexed as necessary.

\begin{prop}\label{prop:lattices-sequences}
Let $\mathfrak A=(a_i)_{i\in\Z}$ be a sequence of natural numbers. Let $\Lambda$ be the lattice with generators $\Delta=[a_i,a_{i+1},a_{i+2},\dots]$ and $-\widetilde\Delta_i=[0,a_{i-1},a_{i-2},\dots]$ and let $\Gamma_i$ be the lattice with generators $\Delta_i$ and $\widetilde \Delta_i$. Then
\[
\frac{\disp(\Lambda)}{\det(\Lambda)}=\frac{\disp(\Gamma_i)}{\det(\Gamma_i)}=\disp(\mathfrak A).
\]
\end{prop}

Let us summarize the results before into a single corollary.

\begin{cor}\label{cor:sequence correspondence}
There is a one-to-one correspondence between the classes of irrational lattices and two-sided sequences of natural numbers, up to re-indexing or transposing.
\end{cor}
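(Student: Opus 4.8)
The goal is to prove Corollary \ref{cor:sequence correspondence}: a one-to-one correspondence between classes of irrational lattices and two-sided sequences of natural numbers, up to re-indexing or transposing. The plan is to verify the correspondence is well-defined in both directions and then that the two directions are mutually inverse once we pass to the indicated equivalence classes. Everything needed has essentially been assembled in the preceding results; the task is to assemble it carefully while tracking what ``up to re-indexing or transposing'' means.

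First I would set up the map from lattices to sequences. Given an irrational lattice, Proposition \ref{prop:NBAequivmatrices} lets me pass to an NBA-equivalent generating matrix of the normalized form with $1<\Delta$ and $-1<\widetilde\Delta<0$, and Lemma \ref{lem:NBAequivoperations} guarantees this does not change the normalized dispersion, so it respects the equivalence on the lattice side. From such a matrix I read off $\Delta=[a_0,a_1,a_2,\dots]$ and $-\widetilde\Delta=[0,a_{-1},a_{-2},\dots]$, which, by irrationality of $\Delta$ and $\widetilde\Delta$, yields a well-defined two-sided sequence $(a_i)_{i\in\Z}$ of natural numbers. The conversely: given a sequence $\mathfrak A=(a_i)_{i\in\Z}$, I set $\Delta=[a_0,a_1,\dots]$ and $-\widetilde\Delta=[0,a_{-1},\dots]$ and take the lattice generated by the normalized matrix, exactly as in the discussion preceding Lemma \ref{lem: Delta i}.

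Next I would identify precisely the ambiguities that force us to quotient by ``re-indexing or transposing.'' The normalized matrix attached to a lattice is not unique: Lemma \ref{lem:EquivalentLattice} produces it only after a choice of starting box $B_k$, and different choices of starting box shift the index, which by Lemma \ref{lem:Can start anywhere} (and Proposition \ref{prop:lattices-sequences}) corresponds exactly to re-indexing the sequence $(a_i)\mapsto(a_{i+m})$ while preserving normalized dispersion. This is the ``re-indexing'' equivalence. The ``transposing'' equivalence comes from the symmetry operation $\begin{pmatrix}0&1\\1&0\end{pmatrix}$ allowed in NBA equivalence: swapping the two coordinates swaps the roles of $\Delta$ and $\widetilde\Delta$, which by the unnamed lemma just before Lemma \ref{lem:Normalized Volume of a Box} sends $(a_i)_{i\in\Z}$ to $(a_{-i})_{i\in\Z}$. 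Thus the two admissible sequence operations match exactly the two sources of non-uniqueness on the lattice side, and I would state the equivalence relation on sequences as: $(a_i)\sim(a_{i+m})$ for $m\in\Z$ and $(a_i)\sim(a_{-i})$.

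Finally I would check the two compositions. Starting from a sequence, building the lattice, and reading off its sequence returns the same sequence (up to the allowed re-indexing/transposing), because the continued fraction expansions of $\Delta$ and $-\widetilde\Delta$ recover the $a_i$ uniquely by the standard uniqueness of continued fraction expansions for irrationals. Starting from a lattice class, choosing a normalized representative, extracting the sequence, and rebuilding a lattice returns an NBA-equivalent lattice, since the rebuilt normalized matrix is literally the normalized representative we started from. The main obstacle, and the part deserving the most care, is bookkeeping the equivalence relations so that the map is genuinely well-defined on classes rather than on representatives: I must confirm that a different choice of starting box or a coordinate swap changes the sequence only by the permitted operations, and that no further identifications are hidden. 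Once that bookkeeping is pinned down, injectivity and surjectivity are immediate from uniqueness of continued fractions, so I would present the corollary as a direct consequence of Proposition \ref{prop:lattices-sequences}, Lemma \ref{lem:Can start anywhere}, and the transposition lemma, with the equivalence-class matching spelled out explicitly.
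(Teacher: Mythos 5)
Your proposal is correct and follows essentially the same route as the paper, which offers no separate argument for this corollary but simply presents it as a summary of Proposition \ref{prop:lattices-sequences}, Lemma \ref{lem:Can start anywhere}, and the transposition lemma. Your more careful bookkeeping of how the choice of starting box corresponds to re-indexing and the coordinate swap to reversal is exactly the implicit content of the paper's ``let us summarize the results before'' remark.
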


\begin{thm}\label{thm:Genreal upper/lower bounds}
Let $\Lambda$ be a irrational lattice with generators $\{(1,1),(-\Delta,-\widetilde\Delta)\}$ whose continued fraction expansions are of the form $\Delta=[a_0,a_1,a_2,\dots]$ and $-\widetilde\Delta=[0,a_{-1},a_{-2},\dots]$ with $a_k\in\N$. Then for all $i\in\Z$ with $a_i>1$, 
% \[
% A:=\frac{a_i+4}{4}+\frac{1}{a_i}-\frac{r}{4a_i}\leq\max_{0\leq j\leq a_i}\frac{\vol(B_{A_i+j})}{\det(\Lambda)}< \frac{a_i+4}{4}+\frac{3}{4}+\frac{3}{4(a_i+2)}=A+\frac{1}{2}.
% \]
\[
L(a_i):=\frac{a_i}{4}+1+\frac{1}{a_i}-\frac{r}{4a_i}<\max_{0\leq j< a_i}\frac{\vol(B_{A_i+j})}{\det(\Lambda)}< L(a_i+2)<L(a_i)+\frac{1}{2}.
\]
% \[
% \frac{(a_i+2)^2-r}{4a_i}\leq\max_{0\leq j\leq a_i}\frac{\vol(B_{A_i+j})}{\det(\Lambda)}\leq 
% \frac{(a_i+4)^2-r}{4(a_i+2)}
% \]
In particular, $\Lambda$ has finite dispersion if and only if the continued fraction coefficients are bounded.
\end{thm}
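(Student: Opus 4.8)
The plan is to study the single normalized quantity
\[
V_i(j):=\frac{\vol(B_{A_i+j})}{\det(\Lambda)}=\frac{(1-j+\Delta_i)(1+j-\widetilde\Delta_i)}{\Delta_i-\widetilde\Delta_i},
\]
furnished by Lemma \ref{lem:Normalized Volume of a Box}, and to sandwich its maximum over $j$ between the values of the boundary function
\[
h_m(j):=\frac{(m+1-j)(1+j)}{m},
\]
which is exactly $V_i(j)$ with $\Delta_i$ and $-\widetilde\Delta_i$ replaced by the endpoints $m$ and $0$. A one-line optimization shows that the integer maximum $\max_{0\le j<m}h_m(j)$ is attained at $j=\lfloor m/2\rfloor$ and equals $\tfrac m4+1+\tfrac1m-\tfrac r{4m}$, where $r\in\{0,1\}$ is the parity of $m$; this is precisely $L(m)$. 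The driving fact, recorded in Lemma \ref{lem:Normalized Volume of a Box}, is that for $0<j<a_i$ the function $V_i(j)$ is increasing in $\Delta_i$ and increasing in $-\widetilde\Delta_i$. Since $\Lambda$ is irrational we have the strict bounds $a_i<\Delta_i<a_i+1$ and $0<-\widetilde\Delta_i<1$, and it is exactly this monotonicity that lets us replace the true values of $\Delta_i,\widetilde\Delta_i$ by the clean endpoints $a_i,a_i+1$ and $0,1$.

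For the lower bound, set $j^\ast=\lfloor a_i/2\rfloor$, which lies in $\{1,\dots,a_i-1\}$ precisely because $a_i>1$. Monotonicity together with $\Delta_i>a_i$ and $-\widetilde\Delta_i>0$ gives $\max_{0\le j<a_i}V_i(j)\ge V_i(j^\ast)>h_{a_i}(j^\ast)=L(a_i)$. For the upper bound I push to the opposite endpoints: for $1\le j\le a_i-1$, monotonicity and $\Delta_i<a_i+1$, $-\widetilde\Delta_i<1$ yield
\[
V_i(j)<\frac{(a_i+2-j)(j+2)}{a_i+2}=h_{a_i+2}(j+1)\le L(a_i+2),
\]
the last inequality because $j+1$ stays inside the admissible range $\{0,\dots,a_i+1\}$ for $h_{a_i+2}$. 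The index $j=0$ must be handled on its own, since there $V_i$ is decreasing rather than increasing in $\Delta_i$; but a direct estimate gives $V_i(0)=\frac{(1+\Delta_i)(1-\widetilde\Delta_i)}{\Delta_i-\widetilde\Delta_i}<2<L(a_i+2)$. Hence $\max_{0\le j<a_i}V_i(j)<L(a_i+2)$.

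The final chained inequality $L(a_i+2)<L(a_i)+\tfrac12$ is purely algebraic: $a_i$ and $a_i+2$ share the same parity $r$, so
\[
L(a_i+2)-L(a_i)=\tfrac12-\frac{2}{a_i(a_i+2)}\Big(1-\tfrac r4\Big)<\tfrac12.
\]
For the finiteness dichotomy I invoke Proposition \ref{prop:lattices-sequences}, which equates $\disp(\Lambda)/\det(\Lambda)$ with $\disp(\mathfrak A)=\sup_i\max_{0\le j<a_i}V_i(j)$. If the coefficients are bounded by some $M$, then every term with $a_i>1$ is at most $\max_{2\le m\le M}L(m+2)$, while the terms with $a_i=1$ are bounded by the constant $2$; hence $\disp(\mathfrak A)<\infty$. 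Conversely, if the coefficients are unbounded, pick $a_{i_k}\to\infty$; the lower bound gives $\max_j V_{i_k}(j)>L(a_{i_k})\ge\tfrac{a_{i_k}}4\to\infty$, so $\disp(\mathfrak A)=\infty$.

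The only genuinely fiddly points, and the place I expect to spend the most care, are the integrality bookkeeping that identifies $L(m)$ with the exact integer maximum of $h_m$ (including the parity correction $r$), and the sign reversal of the $\Delta_i$-monotonicity at $j=0$, which is why that index needs the separate elementary bound $V_i(0)<2$. Everything else is a mechanical application of Lemma \ref{lem:Normalized Volume of a Box} combined with the strict continued-fraction bounds $a_i<\Delta_i<a_i+1$ and $0<-\widetilde\Delta_i<1$.
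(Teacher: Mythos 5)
Your proposal is correct and follows essentially the same route as the paper's proof: apply Lemma \ref{lem:Normalized Volume of a Box}, use its monotonicity in $\Delta_i$ and $-\widetilde\Delta_i$ to replace these by the endpoint values $a_i$, $a_i+1$ and $0$, $1$, and then optimize the resulting quadratics over integer $j$ at $\lfloor a_i/2\rfloor$ with the parity correction $r$. The only (cosmetic) divergence is at $j=0$: the paper disposes of it by noting $\vol(B_{A_i})<\vol(B_{A_i+1})$ when $a_i>1$, whereas you bound $V_i(0)<2$ directly via $(1-\Delta_i)(1+\widetilde\Delta_i)<0$ --- both are fine, and your explicit treatment of the finiteness dichotomy is a welcome bit of added care.
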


\begin{proof}
% Let $\Gamma$ be the lattice generated by $(1,1)$ and $(-\Delta_i,\widetilde\Delta_i)$. Applying Lemma \ref{lem:Can start anywhere} and adopting its notation we see that
% \[
% \max_{0\leq j\leq a_i}\frac{\vol(B_{A_i+j})}{\det(\Lambda)} = \max_{0\leq j\leq a_i}\frac{\vol(C_{j})}{\det(\Gamma)}.
% \]
By Lemma \ref{lem:Normalized Volume of a Box}, 
\begin{equation}\label{eq:Area of C_j}
\frac{\vol(B_{A_i+j})}{\det(\Lambda)}=\frac{(1-j+a_i+\delta)(1+j-\widetilde\Delta_i)}{a_i+\delta-\widetilde\Delta_i},
\end{equation}
where $\delta=[0,a_{i+1},a_{i+2},\dots]=\Delta_i-a_i$. One can easily verify that the area of the $A_i$-th box is less than that of the $(A_i+1)$-th box when $a_i>1$. By Lemma  \ref{lem:Normalized Volume of a Box}, this is increasing in both $\delta$ and $-\widetilde\Delta_i$ for fixed $0<j<a_i$. Since the upper and lower bound for $\delta$ and $-\widetilde\Delta_i$ are $1$ and $0$ (which will never\footnote{The only modification to the proof needed for Section \ref{sec:Periodic Disperson} is to observe that the assumption that the lattice is not $\Z^2$ means that we cannot attain both bounds at the same time.} be obtained), for $0<j<a_i$,
\[
\frac{(1+a_i-j)(1+j)}{a_i}< \frac{\vol(B_{A_i+j})}{\det(\Lambda)} < \frac{(2+a_i-j)(2+j)}{a_i+2}.
\]
Since $j$ can only take integer values the upper and lower bound are maximized at either the floor or ceiling of $a_i/2$. We substitute $j=(a_i\pm r)/2$ where $r\equiv a_i\mod 2$ to get
\begin{align*}
\frac{1}{a_i}\Bigg(1+\frac{a_i}{2}\mp\frac{r}{2}\Bigg)\Bigg(1+\frac{a_i}{2}\pm\frac{r}{2}\Bigg)
    &<
    \max_{0\leq j< a_i}\frac{\vol(B_{A_i+j})}{\det(\Lambda)} 
    < \frac{1}{a_i+2}\Bigg(2+\frac{a_i}{2}\mp\frac{r}{2}\Bigg)\Bigg(2+\frac{a_i}{2}\pm\frac{r}{2}\Bigg)\\
    \frac{1}{a_i}\Bigg(\frac{a_i^2}{4}+a_i+1-\frac{r}{4}\Bigg)
    &<
    \max_{0\leq j< a_i}\frac{\vol(B_{A_i+j})}{\det(\Lambda)}
    <
    \frac{1}{a_i+2}\Bigg(\frac{a_i^2-4}{4}+2(a_i+2)+1-\frac{r}{4}\Bigg)\\
     \frac{a_i}{4}+1+\frac{1}{a_i}-\frac{r}{4a_i}
    &<
    \max_{0\leq j< a_i}\frac{\vol(B_{A_i+j})}{\det(\Lambda)}
    <\frac{a_i+2}{4}+1+\frac{1}{a_i+2}-\frac{r}{4(a_i+2)}.\qedhere
\end{align*}
 \end{proof}

%%%%%%%%%%%%%%%%%%%%%%%%%%%%%%%%%%%%%%%%%%%%%%%%%%%%%%%%%%%%%%%%%%%%%%%%%%%%%%%%%%%%%%%%%%

\section{Dispersion of quadratic (integer) lattices}\label{sec:dispofquadlatt}

A \emph{quadratic field} is a degree 2 field extension of the rationals. They are of the form $\Q[\sqrt{d}]$ where $d\in\Z$ is square-free and consist of \emph{quadratic numbers}, i.e.\ the roots of quadratic polynomials with integer coefficients.  A \emph{quadratic integer} is the root of a monic polynomial of degree 2 with integer coefficients and the ring of all quadratic integers in $\Q[\sqrt{d}]$ is called the \emph{ring of integers}. It takes the from $\Z[\delta_d]$ where 
\[
\delta_d:=\begin{cases}
\sqrt{d}    &\text{if } d\equiv 2\text{ or } 3 \mod 4,\\
\frac{1+\sqrt{d}}{2} &\text{if } d\equiv 1\mod 4.
\end{cases}
\]
Given a quadratic number $\alpha=r+s\sqrt{d}\in\Q[\sqrt{d}]$ we define its conjugate to be $\overline{\alpha}=r-s\sqrt{d}$. The \emph{norm} and \emph{trace} of a quadratic number are defined to be $N(\alpha)=\alpha\overline\alpha$ and $Tr(\alpha)=\alpha+\overline\alpha$. Over $\Q[\sqrt{d}]$ these functions take rational values while over $\Z[\delta_d]$ these functions take integer values. 

\begin{remark}\label{rem:NormInteger}
Since the norm is always an integer, Proposition \ref{prop:VolumeIsNormsPlusConstant} tells us in particular that the fractional part of the area of all empty axix-parallel boxes amidst the points of a quadratic lattice is the same.
\end{remark}

To each subring $R=\Z[\delta]\subseteq\Z[\delta_d]$ of the ring of integers $\Q[\sqrt{d}]$ we associate the lattice 
\[
\Lambda=\{(\alpha,\overline\alpha)\in\R^2:\alpha\in\Z[\delta]\}
\quad\text{generated by}\quad 
\begin{pmatrix}
1   &   \delta\\
1   &   \overline\delta
\end{pmatrix}.
\]
We call such lattices \emph{quadratic lattices}.  As $\delta=m+n\delta_d$, the determinant of a quadratic lattice must be of the form
\begin{equation}\label{eq:determinat}
\det(\Lambda)=\begin{cases}
n\sqrt{d}   &\text{if } d\equiv1\mod4,\\
2n\sqrt{d}  &\text{otherwise}.
\end{cases}
\end{equation}
The square of this determinant is called the \emph{discriminant} of $R$ and is denoted by $\disc(R)$. 

\begin{notation}
We denote the lattice of $\Z[\delta_d]$ by $\Lambda_d$. 
\end{notation} 

It is well known that the continued fraction expansion of a number $\Delta\in\R$ is purely periodic if and only if it is a quadratic number that satisfies 
$1<\Delta$ and $-1<\overline\Delta<0$. We denote the continued fraction expansion of a purely periodic number by $\Delta=[\overline{a_0,a_1,\dots,a_{l-1}}]$, where $l$ is the minimal period length. It is well-known that $-\overline\Delta=[0,\overline{a_1,a_2,\dots,a_l}]$ whenever $\Delta$ is a purely periodic quadratic number.

\begin{lemma}\label{lem:UPPG}
The unique purely periodic generator of the subring $\Z[\delta]\subset\Z[\delta_d]$ is $\Delta=\lfloor-\overline\delta\rfloor+\delta$.
\end{lemma}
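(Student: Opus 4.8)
The plan is to combine the classical reducedness criterion for pure periodicity, quoted just above the lemma, with an explicit description of the ring generators of $\Z[\delta]$. Recall that a quadratic irrational $\Delta$ has a purely periodic continued fraction expansion if and only if $\Delta>1$ and $-1<\overline\Delta<0$. So the task is to show that among all ring generators of $\Z[\delta]$ exactly one satisfies these two inequalities, and that it equals $\lfloor-\overline\delta\rfloor+\delta$. As elsewhere in this section I take $\delta=m+n\delta_d$ with $n>0$, so that $\delta_d>\overline{\delta_d}$ forces $\delta>\overline\delta$ and the determinant in \eqref{eq:determinat} is positive.

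First I would pin down the generators. Writing a candidate as $\alpha=a+b\delta$ with $a,b\in\Z$, every element of $\Z[\delta]\subseteq\Z[\delta_d]$ is a quadratic integer, so $\alpha^2\in\Z+\Z\alpha$ and hence $\Z[\alpha]=\Z+\Z\alpha=\Z+\Z b\delta$. This coincides with $\Z[\delta]=\Z+\Z\delta$ precisely when $b=\pm1$. Thus the ring generators of $\Z[\delta]$ are exactly the numbers $k+\delta$ and $k-\delta$ with $k\in\Z$.

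Next I would eliminate one family and pin down $k$ in the other. If $\Delta=k-\delta$ then $\Delta-\overline\Delta=\overline\delta-\delta<0$, so $\Delta<\overline\Delta$, which is incompatible with the required $\Delta>1>0>\overline\Delta$; hence no generator of this shape is reduced. For $\Delta=k+\delta$ we have $\overline\Delta=k+\overline\delta$, and since $\overline\delta$ is irrational the condition $-1<k+\overline\delta<0$ has the single integer solution $k=\lfloor-\overline\delta\rfloor$; indeed this choice gives $\overline\Delta=-\{-\overline\delta\}\in(-1,0)$.

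It remains to verify the first inequality $\Delta>1$ for this $k$, and this is the one step with real content. Writing $\Delta=\overline\Delta+(\delta-\overline\delta)$ and using $\overline\Delta>-1$, it suffices to show $\delta-\overline\delta\ge 2$. Here $\delta-\overline\delta=n(\delta_d-\overline{\delta_d})$ with $n\ge1$, and $\delta_d-\overline{\delta_d}$ equals $2\sqrt d$ when $d\equiv2,3\pmod 4$ (hence at least $2\sqrt2$) and $\sqrt d$ when $d\equiv1\pmod4$ (hence at least $\sqrt5$, since then $d\ge5$); in every case $\delta-\overline\delta\ge2$, so $\Delta>1$. Combining the three steps, $\lfloor-\overline\delta\rfloor+\delta$ is reduced, hence purely periodic, and it is the only reduced generator, which is exactly the assertion. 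The main obstacle is precisely this last verification: it is what guarantees that the unique $k$ forced by the conjugate condition does not overshoot, so that a reduced (equivalently, purely periodic) generator genuinely exists rather than merely being forced to be unique if it exists.
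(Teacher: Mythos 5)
Your proof is correct, and it supplies exactly the argument the paper leaves implicit: the lemma is stated without proof, relying on the reducedness criterion ($\Delta>1$, $-1<\overline\Delta<0$) quoted in the paragraph immediately preceding it, which is precisely the criterion you apply. Your classification of the ring generators as $k\pm\delta$, the elimination of $k-\delta$, the pinning of $k=\lfloor-\overline\delta\rfloor$ by the conjugate condition, and the check that $\delta-\overline\delta\ge 2$ forces $\Delta>1$ are all sound and constitute the standard route to this fact.
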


Purely periodic numbers are pretty much the only ones that can be handled without a computer. By Theorem \ref{thm:Genreal upper/lower bounds} we only need to check the largest coefficient and any that are smaller only by 1, contained in a single period. We will see in the next theorem that purely quadratic integers are especially easy to handle because the largest coefficient not only occurs exactly once in the period, but is always twice as large as the next largest. Conveniently, the most interesting lattices for dispersion, i.e.\ those found in Theorems \ref{thm:Thight Bound} and \ref{thm:BestLattices}, are generated by purely periodic numbers.

% \begin{proof}
% This is easily adapted from the proof of \cite[6.7.1]{trif}.\jaspar{Check this or actually write this down.}
% \end{proof}

\begin{thm}\label{thm:EstimateOnAi}
If $\Delta=[\overline{a_0,a_1,\dots,a_{l-1}}]$ is  a purely periodic quadratic integer, then for all $0<i<l$,
\[
\qquad a_i \leq \bigg\lfloor\frac{\Delta-\overline\Delta}{2}\bigg\rfloor =   \bigg\lfloor\frac{a_0}{2} - \overline \Delta\bigg\rfloor < \bigg\lfloor\frac{a_0}{2} + \frac{1}{a_{l-1}}  \bigg\rfloor.
\]
In particular,
\[
a_i\leq \Big\lceil \frac{a_0}{2}\Big\rceil.
\]
\end{thm}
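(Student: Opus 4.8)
The plan is to pass to the standard description of the continued-fraction tails as quadratic surds over a fixed radicand and to control their denominators. Since $\Delta$ is a purely periodic quadratic integer it is reduced, so $Tr(\Delta)=\Delta+\overline\Delta\in\Z$ and $N(\Delta)=\Delta\overline\Delta\in\Z$; writing $D=Tr(\Delta)^2-4N(\Delta)$ we have $\Delta=\tfrac{Tr(\Delta)+\sqrt D}{2}$ and $\Delta-\overline\Delta=\sqrt D$. The well-known reversal identity $-\overline\Delta=[0,\overline{a_1,\dots,a_{l-1},a_0}]$ quoted above gives $\Delta=a_0+(-\overline\Delta)$, i.e.\ $Tr(\Delta)=a_0$; in particular $\tfrac{\Delta-\overline\Delta}{2}=\tfrac{a_0}{2}-\overline\Delta=\tfrac{\sqrt D}{2}$, which is exactly the middle equality. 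Reducedness ($1<\Delta$, $-1<\overline\Delta<0$) translates to $a_0<\sqrt D<a_0+2$, and this alone yields the final ``in particular'' once the main bound is in hand, since an integer at most $\tfrac{\sqrt D}{2}<\tfrac{a_0}{2}+1$ is at most $\lceil a_0/2\rceil$.

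For the main bound I would expand each tail as $\Delta_i=\tfrac{P_i+\sqrt D}{Q_i}$ with the usual integer recurrences $P_{i+1}=a_iQ_i-P_i$ and $Q_iQ_{i+1}=D-P_{i+1}^2$, starting from $P_0=a_0$, $Q_0=2$. Since $\Delta_i-\overline{\Delta_i}=\tfrac{2\sqrt D}{Q_i}$ and $\overline{\Delta_i}\in(-1,0)$, one gets the crude but decisive estimate
\[
a_i\le \Delta_i<\frac{2\sqrt D}{Q_i}.
\]
Thus it suffices to show $Q_i\ge 4$ for every $0<i<l$, for then $a_i<\tfrac{\sqrt D}{2}$ and, $a_i$ being an integer while $\tfrac{\sqrt D}{2}$ is irrational, $a_i\le\lfloor\tfrac{\sqrt D}{2}\rfloor=\lfloor\tfrac{\Delta-\overline\Delta}{2}\rfloor$.

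The denominators are controlled in two steps, and this is where I expect the real work to lie. First, the binary quadratic form $\langle Q_i,-2P_i,(P_i^2-D)/Q_i\rangle$ attached to $\Delta_i$ has discriminant $4D$, and a continued-fraction step is a unimodular change of variables, so its content is constant along the orbit. The initial form is $\langle 2,-2a_0,2N(\Delta)\rangle$, whose content is $\gcd(2,2a_0,2N(\Delta))=2$; hence $Q_i$ is \emph{even} for every $i$. This is precisely the place where the hypothesis that $\Delta$ is a quadratic integer (equivalently $Q_0=2$ with content $2$) enters: without it denominators $Q_i\in\{1,3\}$ could occur and the bound would fail, since for instance a surd with $Q_i=1$ has $a_i=2\lfloor\sqrt D\rfloor$. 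Second, each $\Delta_i$ is reduced, so a denominator $Q_i=2$ forces $P_i\in(\sqrt D-2,\sqrt D)$ together with the integrality condition $P_i\equiv D\pmod 2$; that interval contains exactly two consecutive integers of opposite parity, so there is a unique admissible value, namely $P_i=a_0$. Consequently the only reduced surd in the orbit with $Q_i=2$ is $\Delta_0$ itself, and minimality of the period $l$ gives $Q_i\ne 2$ for $0<i<l$. Combined with evenness this yields $Q_i\ge 4$, completing the main bound.

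Finally, the comparison with $a_{l-1}$ follows from Galois' theorem on purely periodic expansions: $-1/\overline\Delta=[\overline{a_{l-1},a_{l-2},\dots,a_0}]$, whence $a_{l-1}<-1/\overline\Delta<a_{l-1}+1$ and therefore $-\overline\Delta<1/a_{l-1}$. This gives $\tfrac{a_0}{2}-\overline\Delta<\tfrac{a_0}{2}+\tfrac1{a_{l-1}}$ and hence the asserted comparison of integer parts, while the corollary $a_i\le\lceil a_0/2\rceil$ drops out as noted in the first paragraph. The main obstacle throughout is the even-denominator claim of the third paragraph: the rest is bookkeeping with the surd recurrences and reducedness, but the statement that no interior coefficient exceeds $\tfrac12\sqrt D$ really rests on $Q_i\ge 4$, which is exactly the arithmetic consequence of $\Delta$ being an algebraic integer.
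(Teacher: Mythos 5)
Your proof is correct, and it takes a genuinely different (though closely parallel) route from the paper's. The paper works with the norms of the convergent remainders: it first shows $|N(p_i-q_i\Delta)|\le(\Delta-\overline\Delta)/a_i$ by exploiting $|N(p_{i+kl}-q_{i+kl}\Delta)|=|N(p_i-q_i\Delta)|$ and letting $k\to\infty$ in the classical convergent estimate, then proves (Lemma \ref{lem:OnlyPeriodUnit}) that $(-1)^iN(p_i-q_i\Delta)=1$ only when $i\in l\Z$; integrality of the norm forces $|N(p_i-q_i\Delta)|\ge2$ at interior indices and the bound drops out. Your complete-quotient formulation is the same arithmetic in a different dress --- indeed $Q_i=2\,(-1)^iN(p_i-q_i\Delta)$, so your evenness claim is exactly the integrality of the norm and your ``$Q_i=2$ only at multiples of $l$'' is exactly Lemma \ref{lem:OnlyPeriodUnit} --- but your derivation of the key inequality $a_i\le\Delta_i<\Delta_i-\overline{\Delta_i}=2\sqrt D/Q_i$ is a one-line consequence of reducedness and replaces the paper's limiting argument entirely, and your pigeonhole on $P_i\in(\sqrt D-2,\sqrt D)$ with $P_i\equiv D\pmod 2$ is a cleaner substitute for the paper's coefficient comparison. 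Two small remarks: the cleanest justification of $\mathrm{Tr}(\Delta)=a_0$ is simply that $\mathrm{Tr}(\Delta)\in\Z$ and $-\overline\Delta\in(0,1)$ force $\lfloor\Delta\rfloor=\mathrm{Tr}(\Delta)$ (the reversal identity you invoke is, as literally stated, the Galois-reversed period and does not by itself give the tail of $\Delta$); and your final step only yields $\lfloor\tfrac{a_0}{2}-\overline\Delta\rfloor\le\lfloor\tfrac{a_0}{2}+\tfrac1{a_{l-1}}\rfloor$ rather than the strict inequality of floors displayed in the statement --- but the paper's own proof is equally silent on that point, and the substantive conclusions $a_i\le\lfloor(\Delta-\overline\Delta)/2\rfloor$ and $a_i\le\lceil a_0/2\rceil$ are fully established by your argument.
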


\begin{proof}
The proof can be found in Appendix \ref{sec:Proof of EstimateOnAi}.
\end{proof}

Keep in mind for the next theorem that $\disc(R)$ can be computed without continued fractions.

\begin{thm}\label{thm:MainTheorem}
The dispersion of the lattice $\Lambda$ associated to a subring $R$ of the ring of integers of some quadratic field is
\[
\disp(\Lambda)=\Bigg(\frac{\det(\Lambda)}{2}+1\Bigg)^2-\frac{r}{4}=\Bigg(\frac{\sqrt{\disc(R)}}{2}+1\Bigg)^2-\frac{r}{4},
% =\Bigg(\frac{\Delta-\overline\Delta}{2}+1\Bigg)^2-\frac{r}{4},
\]
where $r\equiv\det(\Lambda)^2\mod2$.
\end{thm}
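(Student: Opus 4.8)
The plan is to reduce the dispersion of $\Lambda$ to a single continued fraction coefficient via the framework of Section \ref{sec:contfracconnect}, then apply the structural result of Theorem \ref{thm:EstimateOnAi} to pin down exactly which coefficient governs the maximum. First I would invoke Lemma \ref{lem:UPPG} to replace the given generator $\delta$ by the unique purely periodic generator $\Delta=\lfloor-\overline\delta\rfloor+\delta$ of the subring $R=\Z[\delta]$; since NBA operations (here a horizontal shift by an integer, realized by right multiplication by a matrix in $\GL_2(\Z)$) preserve normalized dispersion by Lemma \ref{lem:NBAequivoperations}, this does not change $\disp(\Lambda)/\det(\Lambda)$. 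Writing $\Delta=[\overline{a_0,a_1,\dots,a_{l-1}}]$, Proposition \ref{prop:lattices-sequences} tells us that $\disp(\Lambda)/\det(\Lambda)=\disp(\mathfrak A)$ depends only on the periodic sequence $\mathfrak A=(a_i)_{i\in\Z}$.

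Next I would locate the maximizing coefficient. By Theorem \ref{thm:EstimateOnAi}, the coefficient $a_0$ is strictly the largest in a period (all other $a_i$ with $0<i<l$ satisfy $a_i\le\lfloor(\Delta-\overline\Delta)/2\rfloor$, which is essentially half of $a_0$), so the bounds of Theorem \ref{thm:Genreal upper/lower bounds} single out the index $i=0$ as the one achieving the supremum in $\disp(\mathfrak A)$. The key point is that for a purely periodic quadratic integer the relevant parameters are exactly $\Delta_0=\Delta$ and $-\widetilde\Delta_0=-\overline\Delta$ (using that $\overline\Delta=\widetilde\Delta$ for the conjugate), so rather than estimating I can compute the maximizing box area \emph{exactly} from Lemma \ref{lem:Normalized Volume of a Box}:
\[
\frac{\vol(B_{j})}{\det(\Lambda)}=\frac{(1-j+\Delta)(1+j-\overline\Delta)}{\Delta-\overline\Delta},
\]
maximized over integers $0\le j<a_0$. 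Here I would use that $\Delta-\overline\Delta=\det(\Lambda)$ and $\Delta+(-\overline\Delta)=\mathrm{Tr}(\Delta)=a_0+$ (a correction of the form $\overline\Delta-\overline\Delta$), so the numerator is a clean quadratic in $j$ whose integer maximizer is $(a_0\pm r)/2$ with $r\equiv a_0\equiv\det(\Lambda)^2\bmod 2$. Substituting this optimal $j$ collapses the expression to $(\det(\Lambda)/2+1)^2-r/4$ after multiplying through by $\det(\Lambda)$, and the identification $\det(\Lambda)=\sqrt{\disc(R)}$ gives the second form.

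The main obstacle will be the exactness of the maximization in the final display: unlike Theorem \ref{thm:Genreal upper/lower bounds}, which only produces bounds because $\delta$ and $-\widetilde\Delta_i$ range over open intervals, here I must verify that the \emph{particular} values $\Delta,-\overline\Delta$ make the half-integer optimizer $(a_0\pm r)/2$ land on an integer in the admissible range $0\le j<a_0$, and that no neighbouring index $i\ne 0$ in the period produces a larger box. The first of these is a parity check on $a_0$ versus $\det(\Lambda)^2$, and the second is precisely what Theorem \ref{thm:EstimateOnAi} delivers by forcing every competing $a_i$ to be roughly half of $a_0$, so that $L(a_i)$ stays below the value at $i=0$. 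I would close by confirming that the supremum is attained (it is, since the box structure is periodic), so $\disp(\Lambda)$ equals $\det(\Lambda)$ times this normalized value, yielding the stated closed form.
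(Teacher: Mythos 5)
Your route is the same as the paper's: pass to the purely periodic generator via Lemma \ref{lem:UPPG}, compute the exact maximum of $\vol(B_j)=(1-j+\Delta)(1+j-\overline\Delta)$ over $0\le j\le a_0$ at the integer $j=(a_0\pm r)/2$, and use Theorem \ref{thm:EstimateOnAi} together with the bounds of Theorem \ref{thm:Genreal upper/lower bounds} to rule out the other indices of the period. The computation at $i=0$ is fine (modulo the slip that $\mathrm{Tr}(\Delta)=\Delta+\overline\Delta=a_0$, not $\Delta-\overline\Delta$; the maximizer and the parity check $r\equiv a_0\equiv\det(\Lambda)^2\bmod 2$ come out correctly anyway).

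The gap is in the elimination of the competing indices. Writing $D=\Delta-\overline\Delta=\det(\Lambda)$, the exact normalized value at $i=0$ equals $L(D)$ in the notation of Theorem \ref{thm:Genreal upper/lower bounds}, whereas an index $i\notin l\Z$ is controlled only by the \emph{upper} bound $L(a_i+2)\le L\bigl(\lfloor D/2\rfloor+2\bigr)$ (note it is $L(a_i+2)$, not $L(a_i)$, that must be beaten). The inequality $L\bigl(\lfloor D/2\rfloor+2\bigr)\le L(D)$ is only an asymptotic statement: the paper's proof quantifies it as holding when $(\Delta-\overline\Delta)^2\ge 21$, and for smaller discriminants the combination of Theorems \ref{thm:Genreal upper/lower bounds} and \ref{thm:EstimateOnAi} does not separate $i=0$ from the rest. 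Consequently the six remaining generators $\delta_5$, $1+\delta_2$, $1+\delta_3$, $1+\delta_{13}$, $1+\delta_{17}$, $1+2\delta_5$ must be checked individually, which the paper does by computing $\vol(B_j)-(\Delta-\overline\Delta)$ over a full period (using Proposition \ref{prop:VolumeIsNormsPlusConstant} and periodicity). Your proposal asserts without qualification that Theorem \ref{thm:EstimateOnAi} ``delivers'' the domination of $i=0$; as stated this fails for small $D$, so the finite case analysis is a necessary part of the proof rather than an optional refinement.
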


\begin{proof}
Let $\Delta$ be the purely periodic generator of the ring $R$ which may be found using Lemma \ref{lem:UPPG}. Using basic calculus, the maximum of $\vol(B_j)=(1-j+\Delta)(1+j-\overline\Delta)$ over $0\leq j\leq a_0$ occurs at either the floor or ceiling of $(\Delta+\overline\Delta)/2$, which, because $a_0=\Delta+\overline\Delta$ is an integer, is equal to
\begin{equation}\label{eq:max vol(B_j)}
   \max_{0\leq j\leq a_0}\vol(B_j)=\Bigg(1-\frac{\Delta+\overline\Delta}{2}\mp\frac{r}{2}+\Delta\Bigg)\Bigg(1+\frac{\Delta+\overline\Delta}{2}\pm\frac{r}{2}-\overline\Delta\Bigg)=\Bigg(1+\frac{\Delta-\overline\Delta}{2}\Bigg)^2-\frac{r}{2} 
\end{equation}
where $r\equiv a_0\mod2$. We apply Theorem \ref{thm:Genreal upper/lower bounds} and Theorem \ref{thm:EstimateOnAi} to estimate
\begin{align*}
    \vol(B_{A_i+j}) 
    &\leq (\Delta-\overline\Delta)\Bigg(\frac{a_i}{4}+\frac{3}{2}+\frac{1}{a_i+2}-\frac{r}{4(a_i+2)}\Bigg)\\
    &< (\Delta-\overline\Delta)\Bigg(\frac{\Delta-\overline\Delta}{8}+\frac{3}{2}+\frac{2}{\Delta-\overline\Delta+4}-\frac{r}{2(\Delta-\overline\Delta+4)}\Bigg)
\end{align*}
for $i\not\in l\Z$ and find that \eqref{eq:max vol(B_j)} will be greater than this value whenever $(\Delta-\overline\Delta)^2\geq 21$.

By (\ref{eq:determinat}) the discriminant of a subring of $\Z[\delta_d]$, where $d\in\N$ is square free, is of the form $n^2d$ when $d\equiv1\mod 4$ and $4n^2d$ otherwise. The only values of $\Delta$ that make $(\Delta-\overline\Delta)^2<21$ are $\delta_5,\,1+\delta_2,\, 1+\delta_3,\, 1+\delta_{13},\, 1+\delta_{17},\,\text{and } 1+2\delta_5$.  Because $\Delta$ is purely periodic, $\vol(B_{A_l+j})=\vol(B_{j})$ holds for all $0\leq j <A_l$. In Figure \ref{fig:Cases} we plot the values of $\vol(B_j)-(\Delta-\overline\Delta)$, which by Proposition \ref{prop:VolumeIsNormsPlusConstant} is an integer, for $0\leq j\leq A_l$ in each of these cases and we see clearly that the maximum value of $\vol(B_j)-(\Delta-\overline\Delta)$ occurs for some $0\leq j< a_0$, which is what we wanted. 
\end{proof}

Next we will obtain sharp upper and lower bounds on the dispersion of a lattice $\Lambda$ based on the largest continued fraction coefficient of its generators which we denote by $a$. First we observe that since
\[
[\overline{a}]=\frac{a+\sqrt{a^2+4}}{2}
\quad\text{and}\quad
-[0,\overline{a}]=\frac{a-\sqrt{a^2+4}}{2}
\quad\text{both satisfy}\quad
x^2-ax+1=0,\text{ and since}
\]
\[
[\overline{a,1}]=\frac{a+\sqrt{a^2+4a}}{2}\quad\text{and}\quad-[0,\overline{1,a}]=\frac{a-\sqrt{a^2+4a}}{2} \quad \text{both satisfy} \quad x^2-ax+a=0,
\]
 Theorem \ref{thm:MainTheorem} applies to both the lattice corresponding to the sequence $(\overline a)$ and the lattice corresponding to the sequence $(\overline{a,1})$.

\begin{thm}\label{thm:Thight Bound}
Let $\Lambda$ satisfy the framework at the start of Section 3 with associated sequence $\mathfrak{A}=(a_i)_{i\in\Z}$ and suppose that $a = \max \mathfrak{A}$. Then 
\[
\disp(\overline a)\leq\frac{\disp(\Lambda)}{\det(\Lambda)}\leq\disp(\overline{a,1}).
\]
\end{thm}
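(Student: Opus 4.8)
The plan is to translate everything into the sequence language of Section 3 and to bound the normalized area of each box by comparing the partial quotients of $\Lambda$ against those of the two extremal sequences $\overline a$ and $\overline{a,1}$. By Proposition \ref{prop:lattices-sequences} it is enough to prove $\disp(\overline a)\le\disp(\mathfrak A)\le\disp(\overline{a,1})$, where
\[
\disp(\mathfrak A)=\sup_{i\in\Z}\max_{0\le j<a_i}V_i(j),\qquad V_i(j)=\frac{(1-j+\Delta_i)(1+j-\widetilde\Delta_i)}{\Delta_i-\widetilde\Delta_i}.
\]
Two elementary observations drive the argument. First, every partial quotient lies in $\{1,\dots,a\}$, and a continued fraction is increasing in its even-indexed and decreasing in its odd-indexed entries; hence for \emph{all} $i$ one has the uniform bounds $\Delta_i\le[\overline{a,1}]$ and $-\widetilde\Delta_i\le[0,\overline{1,a}]$, these being exactly the values taken at the vertex $a_i=a$ of the lattice $\overline{a,1}$. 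Second, completing the square turns $V_i(j)$ into $\bigl((1+D_i/2)^2-(j-j^*_i)^2\bigr)/D_i$, with local determinant $D_i=\Delta_i-\widetilde\Delta_i$ and $j^*_i=(\Delta_i+\widetilde\Delta_i)/2$; so $\max_{0\le j<a_i}V_i(j)=\bigl((1+D_i/2)^2-\operatorname{dist}(j^*_i,\Z)^2\bigr)/D_i$ once the nearest integer to $j^*_i$ lies in $[0,a_i)$. I will also use the monotonicity of Lemma \ref{lem:Normalized Volume of a Box}: for $0<j<a_i$, $V_i(j)$ increases in $\Delta_i$ and in $-\widetilde\Delta_i$.

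For the upper bound, every box satisfies $j<a_i\le a$. When $1\le j<a_i$, monotonicity together with the uniform bounds shows that $V_i(j)$ is at most the normalized area of the $j$-th box at the vertex $a_i=a$ of $\overline{a,1}$; since $j\le a-1<a$ this is one of the boxes defining $\disp(\overline{a,1})$. The box $j=0$ with $a_i>1$ is dominated by the $j=1$ box at the same index (recall from Theorem \ref{thm:Genreal upper/lower bounds} that the $A_i$-th box is smaller than the $(A_i+1)$-th when $a_i>1$). The sole remaining box, $j=0$ with $a_i=1$, is estimated directly: inserting $\Delta_i\ge[\overline{1,a}]$ and $-\widetilde\Delta_i\le[0,\overline{1,a}]$ and simplifying, one finds its value is at most the $j=1$ vertex box of $\overline{a,1}$, an inequality that reduces to $\mu^2\ge\mu+1$ for $\mu=[\overline{a,1}]$ and holds because $\mu^2=a\mu+a$. (If $a=1$ both bounds equal the Fibonacci value, so take $a\ge2$.) Thus $\disp(\mathfrak A)\le\disp(\overline{a,1})$.

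For the lower bound I fix an occurrence $a_i=a$ and bound $\max_jV_i(j)$ from below. If the forward tail $[a_{i+1},\dots]$ and the backward tail $[a_{i-1},\dots]$ are both smaller than $[\overline a]$, then $\Delta_i>[\overline a]$ and $-\widetilde\Delta_i>[0,\overline a]$, whence by monotonicity $V_i(j)\ge V_{\overline a}(j)$ for every interior $j$; as the constant maximum is attained at an interior $j\approx a/2$ this yields $\max_jV_i(j)\ge\disp(\overline a)$. This both-tails condition is met at (say) the left endpoint of any \emph{odd}-length maximal run of $a$'s — in particular at every isolated occurrence — because the external neighbour is $<a$ and the parity of the run carries smallness of the tail to the correct endpoint.

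The main obstacle is the case where every maximal run of $a$'s has even length, so no index enjoys both tails below $[\overline a]$: at a left endpoint $\Delta_i$ is deficient while $-\widetilde\Delta_i$ is in surplus, and at a right endpoint the roles reverse. Here I would select the run endpoint adjacent to the \emph{smaller} of the two flanking partial quotients and use the completed-square formula to show its determinant $D_i=\Delta_i-\widetilde\Delta_i$ exceeds $\det(\overline a)=\sqrt{a^2+4}$ by enough to swallow the rounding term $\operatorname{dist}(j^*_i,\Z)^2/D_i\le 1/(4D_i)$, giving $\max_jV_i(j)\ge\disp(\overline a)$. I expect this estimate to be the delicate point: the surplus of the good coordinate and the deficit of the bad one are of the same size, and the net margin can be as small as $O(a^{-2})$, so one must exploit the explicit continued-fraction recurrences along the run (the deficit $[\overline a]-\Delta_i$ decays geometrically as one moves into the run, while the surplus is governed by the small flanking quotient). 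Assembling the three cases gives $\disp(\overline a)\le\disp(\mathfrak A)\le\disp(\overline{a,1})$.
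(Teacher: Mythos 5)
Your upper bound is sound and is essentially the paper's own argument: monotonicity of the normalized box area in $\Delta_i$ and $-\widetilde\Delta_i$ (Lemma \ref{lem:Normalized Volume of a Box}) combined with the extremal continued fractions $[\overline{a,1}]$ and $[0,\overline{1,a}]$; your separate treatment of the $j=0$ boxes is if anything more careful than the paper's. The easy half of your lower bound is also fine and again matches the paper in spirit: wherever some occurrence of $a$ has both $\Delta_i\ge[\overline a]$ and $-\widetilde\Delta_i\ge[0,\overline a]$ (isolated $a$'s, odd runs, or a tail that is eventually all $a$'s), monotonicity finishes the job.

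The genuine gap is the remaining case, which you correctly identify (every maximal run of $a$'s has even length, so at each run endpoint one of the two tails is in deficit) but do not prove. Your paragraph about choosing the endpoint adjacent to the smaller flanking quotient and showing that the surplus in $D_i=\Delta_i-\widetilde\Delta_i$ ``swallows'' the rounding term is a plan, not an argument: you yourself note that the net margin can be as small as $O(a^{-2})$, and no estimate is actually carried out. This case is where the real work of the theorem lives. The paper handles it by re-indexing so that $a_0=a$ and $a_1\neq a$, splitting on whether $a_1=1$ or $1<a_1<a$, replacing $\Delta$ and $-\widetilde\Delta$ by the explicit minimal values compatible with those constraints (e.g.\ $\Delta'=[a,1,\overline{1,a}]$ and $-\widetilde\Delta'=[0,\overline{a,1}]$ in the first subcase, $\Delta'=[a,a-1,\overline{1,a-1}]$ and $-\widetilde\Delta'=[0,\overline{a,2}]$ in the second), and then verifying by explicit computation with the resulting surds that the middle box at $j=\lfloor a/2\rfloor$ or $\lceil a/2\rceil$ still has normalized area exceeding $\disp(\overline a)$ for every $a\ge 2$. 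Until you either carry out an estimate of that kind or otherwise close the even-run case, the lower bound is not established.
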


\begin{proof}
By Lemma \ref{lem:Normalized Volume of a Box}  we see that
\begin{equation}
    \max_{0\leq j<a_i}\frac{\vol(B_{A_i+j})}{\det(\Lambda)}\leq \frac{(1-j+a_i+\delta)(1+j-\overline\Delta_i)}{a_i+\delta-\overline\Delta_i},
\end{equation}
where $\delta=\Delta_i-a_i$, is increasing in $a_i$, $\delta$, and $\overline\Delta_i$ for $0<j<a_i$ and so will be maximized when those values are as large as they can be given the restriction $1\leq a_i\leq a$. This occurs at
\[
a_i=a\quad\text{and}\quad \delta=-\overline\Delta_i=[0,\overline{1,a}],
\]
which after applying Lemma \ref{lem:Normalized Volume of a Box} yields the upper bound.

% Assume that $\Lambda$ is a lattice with $\max a_i=a$. If the sequence $(a_i)_{i\in\Z}$ is eventually all $a$'s in either the positive or negative direction then, by Lemma \ref{lem:Can start anywhere}, we may that either $a_{-1}<a$ and $a_i=a$ for all $i>0$, or $a_1<a$ and $a_i=a$ for all $i<0$. In either case 
% \[
% \frac{\disp(\Lambda_{4+a^2})}{\det(\Lambda_{4+a^2})}<\max_{0\leq j\leq a} \frac{\vol(B_j)}{\det(\Lambda)}.
% \]
% If the sequence is not eventually all $a$'s, then re-index the sequence so that $a_0=a$ and $a_{-1}<a$ and let  $k$ be the smallest positive integer such that $a_k<a$. If $k$ is odd then 
% \[
% \frac{\disp(\Lambda_{4+a^2})}{\det(\Lambda_{4+a^2})}<\max_{0\leq j\leq a} \frac{\vol(B_j)}{\det(\Lambda)}.
% \]
% Suppose that $k\geq2$ is even. 
% \begin{itemize}
% \item Letting $\overline\varphi_a=[0,\overline a]$ then $-\overline\Delta-\overline\varphi_a>\overline\varphi_a-\delta>0$. Somehow, the amount you decrease 
% \[
%     \max_{0\leq j<a_i}\frac{\vol(B_{j})}{\det(\Lambda)}
% \]
% by replacing $\overline\Delta$ with $[0,\overline a]$ is greater than the amount you increase it by replacing $\delta$ with $[0,\overline a]$.\\
% \item Show that we can replace one $a_i<a$ with $a$ without increasing the dispersion. 
% \end{itemize}
% \end{proof}

% \begin{proof}[Alternate proof, lower bound]
Assume that $\Lambda$ is a lattice with associated sequence $\mathfrak{A}$ and $a = \max \mathfrak{A}$. If the sequence $\mathfrak{A}$ is eventually all $a$'s in either the positive or negative direction then, by Corollary \ref{cor:sequence correspondence}, we may assume that $\Delta=[\overline{a}]$ and $a_{-1} < a$ and in particular $\widetilde\Delta>[0,\overline{a}]$, in which case, by Lemma \ref{lem:Normalized Volume of a Box},
\[
\disp(\overline a)<\max_{0\leq j\leq a} \frac{\vol(B_j)}{\det(\Lambda)}.
\]
Assume now that the sequence is not eventually all $a$'s. We are going to show in several steps that $\mathfrak{A} = (a)_{i\in\Z}$ is the optimal choice. From now on we will assume that $a_0=a$ and $a_1\neq a$, which can be done by re-indexing via Corollary \ref{cor:sequence correspondence}. Now assuming $a_1=1$ and all other $a_i$ being arbitrary we see for even $a$
\begin{align*}
    \max_{0\leq j\leq a} \frac{\vol(B_j)}{\det(\Lambda)} 
    &\ge \frac{(1-\frac{a}{2}+\Delta)(1+\frac{a}{2}-\widetilde\Delta)}{\Delta-\widetilde\Delta}.
\end{align*}

In the following we are going to omit very lengthy calculation which can be easily checked with the help of a computer. Knowing that this function is increasing both in $\Delta$ and $-\widetilde\Delta$, the right-hand side is bounded from below if $\Delta$ and $-\widetilde\Delta$ would be as small as possible under the assumption that $a_0=a$ and $a_1=1$. This is obtained when we exchange $\Delta$ by 
\[
\Delta'=[a,1,\overline{1,a}]= a + \frac{2-a-\sqrt{a^2+4a}}{2-4a}
\]
and $-\widetilde\Delta$ by
\[
-\widetilde\Delta'=[0,\overline{a,1}]= -\frac{1}{2}+\frac{\sqrt{a^2+4a}}{2a}.
\]
With these values we obtain
\begin{align*}
    \max_{0\leq j\leq a} \frac{\vol(B_j)}{\det(\Lambda)} 
    &\ge \frac{(1-\frac{a}{2}+\Delta')(1+\frac{a}{2}-\widetilde\Delta')}{\Delta'-\widetilde\Delta'}\\
    &=\frac{(8a^6+18a^5-25a^4-57a^3-32a^2+64a-16)+(6a^4-5a^3-31a^2+8a+4)\sqrt{a^2+4a}}{8(4a^5-6a^4-2a^3-6a^2+6a-1)}\\
    &>1+\frac{(a^2+8)\sqrt{a^2+4}}{4(a^2+4)}=\disp([\overline{a}]),
\end{align*}
where you can check the last inequality by considering their difference and see that it has no zero for any $a\ge 1$.
For odd $a$ we have the lower bound
\begin{align*}
    \max_{0\leq j\leq a} \frac{\vol(B_j)}{\det(\Lambda)} 
    &\ge \frac{(1-\frac{a}{2}-\frac{1}{2}+\Delta)(1+\frac{a}{2}+\frac{1}{2}-\widetilde\Delta)}{\Delta-\widetilde\Delta},
\end{align*}
and we choose the same $\Delta'$ and $-\widetilde\Delta'$, arriving at
\begin{align*}
    \max_{0\leq j\leq a} \frac{\vol(B_j)}{\det(\Lambda)} 
    &\ge \frac{(1-\frac{a}{2}-\frac{1}{2}+\Delta')(1+\frac{a}{2}+\frac{1}{2}-\widetilde\Delta')}{\Delta'-\widetilde\Delta'}\\
    &=\frac{(16a^7+36a^6-42a^5-130a^4-38a^3+102a^2-24a)+(12a^5-18a^4-54a^3+26a^2+2a)\sqrt{a^2+4a}}{16(4a^6-6a^5-2a^4-6a^3+6a^2-a)}\\
    &>1+\frac{(a^2+7)\sqrt{a^2+4}}{4(a^2+4)}=\disp([\overline{a}]),
\end{align*}
where you can check the last inequality again as before by considering their difference and see that it has no zero for any $a\ge 1$.

With this we can now assume that for all $i\in\Z$ that $a_i=a$ implies $a_{i+1} \neq 1$ and $a_i=1$ implies $a_{i+1} \neq a$.
Additionally assume $1<a_1<a$. The right-hand side can be bounded from below if we exchange $\Delta$ and $-\widetilde\Delta$ by
\[
\Delta'=[a,a-1,\overline{1,a-1}]=  a- \frac{1}{2} +\frac{\sqrt{(a-1)(a+3)}}{2a-2}
\quad \text{and} \quad
-\widetilde\Delta'=[0,\overline{a,2}]= -1 + \frac{\sqrt{a^2+2a}}{a}.
\]
With these choices we can show the same bound as above, but we omit the calculation since these are a bit more unpleasant than the calculations above, but work in a similar way.
\end{proof}

We end this section with a short example that is obvious for large $a$ and can be easily verified with the computer for small $a$.

\begin{ex}
As $a\to \infty$ the lattice generated by $\Delta=[a,\overline{a-1,1}]$ and $\widetilde\Delta=[0,\overline{a-1,1}]$ has dispersion equal to the upper bound of the previous theorem applied to $a-1$ which happens as one shifts the $a$ further to infinity. This shows that the biggest maximal empty box does not necessarily appear at the largest coefficient and that there is not always a largest empty box, i.e. justifying the supremum in the definition. 
\end{ex}

%%%%%%%%%%%%%%%%%%%%%%%%%%%%%%%%%%%%%%%%%%%%%%%%%%%%%%%%%%%%%%%%%%%%%%%%%%%%

\section{Lattices with small dispersion}\label{sec:small dispersion}

It follows directly from Theorem \ref{thm:Thight Bound} that the quadratic lattices generated by $\varphi$ and $1+\sqrt{2}$, of all lattices, have the smallest and second smallest normalized dispersion. In this section we find the lattices with $n$-th smallest normalized dispersion. These turn out to be generated by quadratic rationals rather than quadratic integers.

\begin{thm}\label{thm:BestLattices}
The lattice with the smallest normalized dispersion is $\Lambda_5$ generated by 
\[
\begin{pmatrix}
1   &\varphi\\
1   &\overline \varphi
\end{pmatrix}
\quad\text{and satisfies}\quad
\frac{\disp(\Lambda_5)}{\det(\Lambda_5)}=\frac{\varphi^3}{\sqrt{5}}=1.89442\dots
\]
The lattice with the second smallest normalized dispersion is $\Lambda_2$ generated by
\[
\begin{pmatrix}
1   &1+\sqrt{2}\\
1   &1-\sqrt{2}
\end{pmatrix}
\quad\text{and satisfies}\quad
\frac{\disp(\Lambda_2)}{\det(\Lambda_2)}=\frac{3+2\sqrt{2}}{2\sqrt{2}}=2.06066\dots
\]
The lattice with the $(n+1)$-th smallest normlized dispersion is generated by
\[
\begin{pmatrix}
1   &-\Delta \\
1   &-\overline\Delta
\end{pmatrix}
\quad\text{with}\quad
\Delta = [\overline{2,\underbrace{1,\dots,1}_{2n},2}].
% \quad\text{and}\quad
% -\overline\Delta = [0,\overline{2,\underbrace{1,\dots,1}_{2n},2}],
\]
The sequence consisting of the $n$-th smallest normalized dispersion converges to
\[
\frac{4+5^{1/2}}{3}=2.07868\dots
\]
which is the normalized dispersion of the lattice generated by
\[
\begin{pmatrix}
1   &-\Delta \\
1   &-\widetilde\Delta
\end{pmatrix}
\quad\text{with}\quad
\Delta =[2,\overline 1] =\varphi^2 \quad\text{and}\quad -\widetilde\Delta=[0,2,\overline1] = \varphi^{-2}.
\]
\end{thm}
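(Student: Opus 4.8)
The plan is to establish this theorem in four pieces, tracking the \emph{normalized} dispersion $\disp(\Lambda)/\det(\Lambda)=\disp(\mathfrak{A})$ throughout and exploiting the sequence-based bookkeeping of Corollary \ref{cor:sequence correspondence} together with the sharp bounds of Theorem \ref{thm:Thight Bound}. First I would dispose of the two smallest lattices. Since $\varphi$ has sequence $(\overline{1})$ with $a=\max\mathfrak A=1$, and $1+\sqrt2$ has sequence $(\overline{2})$ with $a=2$, and since Theorem \ref{thm:Thight Bound} sandwiches the normalized dispersion of \emph{any} lattice with maximum coefficient $a$ between $\disp(\overline{a})$ and $\disp(\overline{a,1})$, I would verify that $\disp(\overline{1,1})=\varphi^3/\sqrt5$ and that every sequence other than $(\overline 1)$ has strictly larger dispersion: if $a=1$ then $\mathfrak A=(\overline1)$ is forced, and if $a\ge 2$ then by the lower bound $\disp(\Lambda)/\det(\Lambda)\ge\disp(\overline{2})=(3+2\sqrt2)/(2\sqrt2)$, which I would compute via Theorem \ref{thm:MainTheorem} (the lattice $(\overline2)$ is $1+\sqrt2$). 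This pins down the two smallest values and shows the gap between them excludes anything in between.

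Next comes the main body: identifying the $(n{+}1)$-th best lattice with the periodic sequence $\mathfrak A_n=(\overline{2,\underbrace{1,\dots,1}_{2n},2})$. The key structural observation is that once we have ruled out $a=1$ (the unique minimizer) all competitive sequences have maximum coefficient exactly $2$, so by Theorem \ref{thm:Thight Bound} their normalized dispersion lies strictly between $\disp(\overline2)=2.06066\ldots$ and $\disp(\overline{2,1})$. Within this regime I would argue that to minimize dispersion one wants the $2$'s to be as sparse and as symmetrically isolated as possible: a block of $2n$ consecutive $1$'s flanked symmetrically by $2$'s. Using Lemma \ref{lem:Normalized Volume of a Box}, the relevant maximum is attained at the index $i$ where $a_i=2$, where the box area is $(1-j+\Delta_i)(1+j-\widetilde\Delta_i)/(\Delta_i-\widetilde\Delta_i)$ maximized over $j\in\{0,1\}$; here $\Delta_i=[2,\overline{1^{2n},2}]$ and $-\widetilde\Delta_i=[0,\overline{2,1^{2n}}]$ by the palindromic symmetry of the period. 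Computing these quadratic surds explicitly (they satisfy fixed quadratics whose coefficients are polynomials in the Fibonacci-type quantities generated by the run of $1$'s) gives a closed-form normalized dispersion $D_n$. The crucial monotonicity claim is that $D_n$ is strictly increasing in $n$ and that \emph{no other} sequence with $\max=2$ yields a value strictly between consecutive $D_n$; this is where I expect the real work to lie.

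The convergence statement is then the cleanest part. As $n\to\infty$ the block of $1$'s dominates, so $\Delta_i=[2,\overline{1^{2n},2}]\to[2,\overline1]=\varphi^2$ and $-\widetilde\Delta_i\to[0,2,\overline1]=\varphi^{-2}$, and by continuity of the expression in Lemma \ref{lem:Normalized Volume of a Box} the limit of $D_n$ equals the normalized dispersion of the lattice generated by $\Delta=\varphi^2,\ -\widetilde\Delta=\varphi^{-2}$. I would then simply evaluate that limiting quantity: with $\Delta-\widetilde\Delta=\varphi^2+\varphi^{-2}=3$ and the maximizing $j=1$ giving box value $\Delta\cdot(1-\widetilde\Delta)=\varphi^2(1+\varphi^{-2})$, a short simplification yields $(4+\sqrt5)/3$. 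I would also note this value exceeds $2$, matching the remark in the introduction and confirming it lies strictly above all $D_n$.

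The main obstacle, by a wide margin, is the second step: proving that the specific palindromic sequences $\mathfrak A_n$ are \emph{exactly} the minimizers and that their dispersions enumerate \emph{all} values in the interval $(\,\disp(\overline{1,1}),\,(4+\sqrt5)/3\,)$ in order, with nothing omitted and no ties. This requires a careful optimization over all two-sided sequences with $\max=2$, showing that (i) any $2$ adjacent to another $2$, or any asymmetric placement of $2$'s relative to the intervening $1$-runs, strictly increases the relevant box area via the monotonicity in Lemma \ref{lem:Normalized Volume of a Box}, and (ii) shortening the central $1$-run strictly decreases dispersion, so the discrete family $\{D_n\}$ is exhaustive. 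The palindrome symmetry (invariance under $i\mapsto -i$, Corollary \ref{cor:sequence correspondence}) is what forces $\Delta$ and $\widetilde\Delta$ into the conjugate pair making these quadratic \emph{rationals} rather than integers, and I would lean on that symmetry to reduce the optimization to a one-parameter comparison that a computer-algebra check (as the authors do elsewhere) can finish.
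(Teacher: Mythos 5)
Your handling of the two smallest lattices (via Theorem \ref{thm:Thight Bound}) and your computation of the limit $(4+\sqrt5)/3$ agree with the paper, but the heart of the theorem --- that the periodic sequences with period $2,\underbrace{1,\dots,1}_{2n},2$ are \emph{exactly} the remaining minimizers, in increasing order, with nothing in between --- is deferred rather than proved, and the plan you sketch for closing it would fail. The paper closes this gap with a chain of seven elimination lemmas: any coefficient $\ge 3$ forces normalized dispersion at least $1+4/\sqrt{13}$; the pattern $(2,1,2)$ (an isolated $1$) forces at least $\tfrac{1}{23}(41+4\sqrt3)$; the pattern $(1,2,1)$ (an isolated $2$) forces at least $\tfrac{4}{15}(3+2\sqrt6)=2.106\dots$; three consecutive $2$'s are excluded; and the runs of $1$'s separating the pairs of $2$'s must all be even and all of the same length. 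The last steps require Lemma \ref{lem:FactorChangingDispersion} and an explicit Fibonacci/derivative computation, not merely the monotonicity in Lemma \ref{lem:Normalized Volume of a Box}, and none of this is supplied by your proposal.

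More seriously, your picture of the optimizers is wrong. You write $\Delta_i=[2,\overline{\underbrace{1,\dots,1}_{2n},2}]$ and $-\widetilde\Delta_i=[0,\overline{2,\underbrace{1,\dots,1}_{2n}}]$, which describe a two-sided sequence whose $2$'s are \emph{isolated} (each flanked by $1$'s), and your step (i) proposes to show that ``any $2$ adjacent to another $2$ strictly increases the relevant box area.'' Both claims are backwards. In the true minimizers $\Delta=[\overline{2,\underbrace{1,\dots,1}_{2n},2}]$ the trailing $2$ of each period abuts the leading $2$ of the next, so the $2$'s occur in \emph{adjacent pairs}; the correct local data at the maximizing index is $\Delta_0=[\overline{2,\underbrace{1,\dots,1}_{2n},2}]$ and $-\widetilde\Delta_0=[0,\overline{2,\underbrace{1,\dots,1}_{2n},2}]=\Delta_0^{-1}$. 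It is precisely the isolated $2$'s, i.e.\ the pattern $(1,2,1)$, that must be excluded, since that pattern forces normalized dispersion at least $\tfrac{4}{15}(3+2\sqrt6)>\tfrac{4+\sqrt5}{3}$. Carried out literally, your optimization would eliminate the actual optimal family and retain one that is not optimal, so the enumeration and the monotonicity of the resulting $D_n$ would not come out as claimed.
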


% \begin{remark}
% The sequence of $n$-th smallest dispersion converges exponentially at a rate of $\varphi^{-4n}$.\jaspar{I am not totally sure that I have this right.} The formula at the beginning of page 19 (which I am guessing is close to the formula for the actual dispersion of the nth smallest lattice) we obtain
% \[
% \frac{2.07868\dots}{1+\varphi^{-4n}0.198632+\varphi^{-8n}0.011336}+\frac{\varphi^{-4n}0.42524+\varphi^{-8n}0.00671024}{1+\varphi^{-4n}0.198632+\varphi^{-8n}0.011336}
% \]
% \[
% \frac{2.07868\dots}{1+\varphi^{-4n}0.198632+\varphi^{-8n}0.011336}+\frac{0.42524+\varphi^{-4n}0.00671024}{\varphi^{4n}+0.198632+\varphi^{-4n}0.011336}
% \]
% \end{remark}
\begin{remark}
Note that the $n$-th best lattice is generated by
\begin{align*}
\Delta    &= \frac{(F_{2n+4}+F_{2n+2})+\sqrt{9F_{2n+3}^2-4}}{2F_{2n+3}} \quad \text{and} \quad 
\widetilde\Delta    = \overline\Delta= -\Delta^{-1}=\frac{(F_{2n+4}+F_{2n+2})-\sqrt{9F_{2n+3}^2-4}}{2F_{2n+3}} 
\end{align*}
with normalized dispersion
\[
1+\frac{2F_{2n+4}}{\sqrt{9F^2_{2n+3}-4}}.
\]
\begin{remark}\label{rem:Lagrange Connection} Another way to write the generator of the $n$-th best lattice is
\[
\Delta= 1+ \frac{1}{2}\vast(\frac{F_{2n}}{F_{2n+3}}+\sqrt{9-\frac{4}{F_{2n+3}^2}}\vast),
\]
\end{remark}
which features the Lagrange Numbers corresponding to the Markov triples of the form $(1,y,z)$ , i.e. $1+y^2+z^2=3yz$. This suggest that there is some connection between Lagrange and Markov spectra and dispersion, which the authors did not explore any further.
\end{remark}
We start with a lemma that will come in handy later on.
\begin{lemma}\label{lem:FactorChangingDispersion}
For $2<\Delta<3$ we have
\[
\frac{\lambda\Delta\Big(2+\frac{1}{\lambda\Delta}\Big)}{\lambda\Delta+\frac{1}{\lambda\Delta}} < \frac{\Delta\Big(2+\frac{1}{\Delta}\Big)}{\Delta+\frac{1}{\Delta}},
\]
if $0<\lambda <1$, and
\[
\frac{\lambda\Delta\Big(2+\frac{1}{\lambda\Delta}\Big)}{\lambda\Delta+\frac{1}{\lambda\Delta}} > \frac{\Delta\Big(2+\frac{1}{\Delta}\Big)}{\Delta+\frac{1}{\Delta}},
\]
if $1<\lambda<2$.
In particular, if a lattice has generators $\Delta$ and $\widetilde\Delta=-\Delta^{-1}$, then increasing $\Delta$ has the effect of increasing the volume $\Vol(B_1)$, while decreasing $\Delta$ causes the opposite effect.
\end{lemma}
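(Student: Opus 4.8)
The plan is to treat the claim as a purely one-variable analytic statement. Define
\[
f(x)=\frac{x\Big(2+\frac{1}{x}\Big)}{x+\frac{1}{x}}=\frac{2x^2+x}{x^2+1},
\]
where I have multiplied numerator and denominator by $x$. The two displayed inequalities then say exactly that $f(\lambda\Delta)<f(\Delta)$ for $0<\lambda<1$ and $f(\lambda\Delta)>f(\Delta)$ for $1<\lambda<2$, so everything reduces to understanding the monotonicity of $f$ on the relevant range of its argument. First I would compute
\[
f'(x)=\frac{(4x+1)(x^2+1)-(2x^2+x)(2x)}{(x^2+1)^2}=\frac{-2x^2+4x+1}{(x^2+1)^2},
\]
whose sign is governed by the quadratic $-2x^2+4x+1$, which has roots $x=(2\pm\sqrt{6})/2=1\pm\sqrt{6}/2$. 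Thus $f$ is strictly increasing on $\big(0,\,1+\tfrac{\sqrt 6}{2}\big)\approx(0,2.2247)$ and strictly decreasing thereafter.

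With the monotonicity in hand, the two cases are immediate once I control the interval in which the arguments $\Delta$ and $\lambda\Delta$ live. For $0<\lambda<1$ I have $0<\lambda\Delta<\Delta$, and since $2<\Delta<3$ both arguments satisfy $0<\lambda\Delta<\Delta<3$; but I need both to lie in the increasing region, i.e. below $1+\sqrt 6/2\approx 2.2247$. This is the one genuinely delicate point, because $\Delta$ itself can exceed $2.2247$. Here I would use that the quantity being compared, $f(\Delta)$ against $f(\lambda\Delta)$, need not rely on monotonicity across the whole path; instead I would argue directly from the explicit difference
\[
f(\lambda\Delta)-f(\Delta)=\frac{2\lambda^2\Delta^2+\lambda\Delta}{\lambda^2\Delta^2+1}-\frac{2\Delta^2+\Delta}{\Delta^2+1},
\]
clear denominators, and show that the resulting numerator $N(\lambda,\Delta)$ has a fixed sign on the rectangle $\{0<\lambda<1,\,2<\Delta<3\}$ and the opposite fixed sign on $\{1<\lambda<2,\,2<\Delta<3\}$. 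A short computation gives
\[
N(\lambda,\Delta)=(\lambda-1)\Delta\big(\Delta^2(\lambda+1)(1-2\lambda)+\lambda+2\,\Delta\big)\big/\text{(positive)},
\]
so after factoring out $(\lambda-1)$ the remaining bracket can be checked to keep constant sign on $2<\Delta<3$ for the two $\lambda$-ranges, yielding the two inequalities at once.

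The main obstacle, then, is exactly this sign analysis of the factored numerator rather than any conceptual difficulty: because $f$ changes monotonicity at $x=1+\sqrt6/2$, a naive appeal to "\,$f$ is increasing\," fails when $\Delta$ is near $3$, and one must verify the sign of the degree-two-in-$\Delta$ bracket over the full box $2<\Delta<3$, $0<\lambda<2$. I expect this to come down to checking the bracket at the corners and edges of the rectangle (or equivalently confirming it has no zero in the interior), which is elementary but must be done carefully. For the final sentence of the lemma I would simply specialize to a lattice with $\widetilde\Delta=-\Delta^{-1}$: by Lemma \ref{lem:Normalized Volume of a Box} the normalized volume $\vol(B_1)/\det(\Lambda)$ with $j=1$ equals $f(\Delta)$ after substituting $-\widetilde\Delta=\Delta^{-1}$, so the monotonicity of $f$ on the range $2<\Delta<3$ (where $f'>0$) shows that increasing $\Delta$ increases $\vol(B_1)$ and decreasing it has the opposite effect, as claimed.
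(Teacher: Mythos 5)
Your overall strategy is the same as the paper's: subtract, put over a common denominator, factor $(\lambda-1)$ out of the numerator, and show the remaining factor keeps one sign on $2<\Delta<3$, $0<\lambda<2$. However, both explicit computations you display are wrong, and the second error is fatal to the proof as written. First, a minor point: the numerator of $f'(x)$ is $(4x+1)(x^2+1)-(2x^2+x)(2x)=-x^2+4x+1$, not $-2x^2+4x+1$, so $f$ is increasing on $(0,2+\sqrt5)$ rather than on $(0,1+\sqrt6/2)$. This only affects your side discussion, since you rightly abandon the pure monotonicity route (indeed $\lambda\Delta$ can exceed $2+\sqrt5$ when $\lambda$ is near $2$ and $\Delta$ near $3$).

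The serious problem is your factorization of the numerator. The correct computation gives
\[
(2\lambda^2\Delta^2+\lambda\Delta)(\Delta^2+1)-(2\Delta^2+\Delta)(\lambda^2\Delta^2+1)
=(\lambda-1)\,\Delta\Big((2\Delta-\Delta^2)\lambda+2\Delta+1\Big),
\]
so the bracket is \emph{linear} in $\lambda$, not the quadratic-in-$\lambda$ expression $\Delta^2(\lambda+1)(1-2\lambda)+\lambda+2\Delta$ you wrote. Your bracket does \emph{not} keep constant sign on the strip $1<\lambda<2$, $2<\Delta<3$ (e.g.\ at $\lambda=1.5$, $\Delta=2.5$ it is negative while at $\lambda$ slightly above $1$ and $\Delta$ near $2$ with small $\Delta^2$-contribution it need not be), and if it were the true numerator the lemma would be false for $1<\lambda<2$; so the "check the sign on the rectangle" step cannot be carried out with your expression. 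With the correct bracket the argument closes exactly as in the paper: since $2\Delta-\Delta^2<0$ one writes the bracket as $(2\Delta-\Delta^2)\big(\lambda-\tfrac{2\Delta+1}{\Delta^2-2\Delta}\big)$, and $\tfrac{2\Delta+1}{\Delta^2-2\Delta}>2>\lambda$ for $2<\Delta<3$ (equivalently $2\Delta^2-6\Delta-1<0$ there), so the bracket is a product of two negative factors, hence positive, and the sign of the whole numerator is the sign of $\lambda-1$. Your final paragraph deducing the "in particular" statement from Lemma \ref{lem:Normalized Volume of a Box} with $j=1$ and $-\widetilde\Delta=\Delta^{-1}$ is fine.
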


\begin{proof}
We begin by subtracting the right hand side from the left hand side and finding the common denominator. Since the denominator of the resulting fraction is positive, the sign of the difference, and therefore, the inequality in the statement of the lemma, depends only on the numerator:
\[
\Delta\bigg(\Big(2\Delta - \Delta^2\Big)\lambda + 1+2\Delta\bigg)(\lambda-1)=\Delta\Big(2\Delta - \Delta^2\Big)\Bigg(\lambda - \frac{1+2\Delta}{\Delta^2 - 2\Delta}\Bigg)(\lambda-1).
\]
The assumption $2<\Delta<3$ and $\lambda<2$ implies
\[
2\Delta-\Delta^2<0 \quad \text{and} \quad \lambda-\frac{1+2\Delta}{\Delta^2 - 2\Delta}<\lambda-2<0.
\] 
Thus we see that the sign of the numerator is depends only on $\lambda-1$.
\end{proof}

We are going break up the proof of Theorem \ref{thm:BestLattices} into several lemmas to make it more readable and to simplify notation we will use the correspondence between lattices and sequences, i.e.\ the sequence $(a_i)_{i\in\Z}$ represents the lattice generated by $\Delta=[a_0,a_1,\dots]$ and $-\widetilde\Delta=[0,a_{-1},a_{-2},\dots]$. 

\begin{lemma}
If for $\mathfrak{A}=(a_i)_{i\in\Z}$ there exists an $i\in\Z$ such that $a_i\ge 3$, then
\[
\disp(\mathfrak{A}) \ge 1 + \frac{4}{\sqrt{13}}=2.10940\dots
\]
In particular, if a sequence is to have dispersion smaller than the limit in Theorem \ref{thm:BestLattices}, it cannot contain an element larger than $2$.
\end{lemma}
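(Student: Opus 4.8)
The plan is to reduce the problem to the single extremal constant sequence $(\overline{3})$ by invoking the lower bound already established in Theorem \ref{thm:Thight Bound}, and then to pin down the resulting value explicitly via Theorem \ref{thm:MainTheorem}. The key observation driving everything is that $\disp(\overline{3})$ turns out to equal exactly $1+4/\sqrt{13}$.

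First I would dispose of the degenerate case: if $\mathfrak{A}$ is unbounded, then by Theorem \ref{thm:Genreal upper/lower bounds} the associated lattice has infinite dispersion, so $\disp(\mathfrak{A})=\infty$ and the claim is trivial. Thus I may assume $\mathfrak{A}$ is bounded, in which case $a:=\max\mathfrak{A}$ is attained, and the hypothesis guarantees $a\ge 3$. Applying Theorem \ref{thm:Thight Bound} (together with Proposition \ref{prop:lattices-sequences}, which identifies $\disp(\mathfrak{A})$ with the normalized dispersion of the associated lattice) gives $\disp(\mathfrak{A})\ge\disp(\overline{a})$, so it suffices to bound $\disp(\overline{a})$ from below for every integer $a\ge 3$.

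Next I would compute $\disp(\overline{a})$ explicitly. The sequence $(\overline{a})$ corresponds to the quadratic lattice generated by $\Delta=[\overline{a}]=\tfrac12(a+\sqrt{a^2+4})$, which has determinant $\sqrt{a^2+4}$ and discriminant $a^2+4$; writing $s=\sqrt{a^2+4}$ and $r\equiv a\bmod 2$, Theorem \ref{thm:MainTheorem} yields
\[
\disp(\overline{a})=1+\frac{s}{4}+\frac{4-r}{4s}.
\]
Since $4-r\ge 3$ regardless of the parity of $a$, I can bound this uniformly by $\disp(\overline{a})\ge\Phi(a):=1+\frac{s}{4}+\frac{3}{4s}$. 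Viewed as a function of $s$, $\Phi$ has derivative $\tfrac14\bigl(1-3/s^2\bigr)$, which is positive for $s>\sqrt{3}$; as $s=\sqrt{a^2+4}\ge\sqrt{13}$ whenever $a\ge 3$, the map $\Phi$ is increasing there, so its minimum over $a\ge 3$ is attained at $a=3$, where
\[
\Phi(3)=1+\frac{\sqrt{13}}{4}+\frac{3}{4\sqrt{13}}=1+\frac{4}{\sqrt{13}}.
\]
Combining the inequalities gives $\disp(\mathfrak{A})\ge\disp(\overline{a})\ge 1+4/\sqrt{13}$, and the concluding remark follows because $1+4/\sqrt{13}=2.10940\dots>2.07868\dots=(4+\sqrt{5})/3$.

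The one genuine subtlety is the parity dependence of the formula for $\disp(\overline{a})$: the even and odd cases carry different constants, so a naive ``$\disp(\overline{a})$ is increasing in $a$'' is not literally available. The device of replacing $4-r$ by its minimal value $3$ collapses both parities into the single increasing function $\Phi$ whose minimum sits exactly at the odd endpoint $a=3$, which is precisely the configuration producing the extremal bound. I expect this, rather than any computation, to be the crux; everything else is a direct appeal to the already-proved Theorems \ref{thm:Thight Bound} and \ref{thm:MainTheorem}.
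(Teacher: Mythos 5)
Your proposal is correct and follows the same route as the paper, whose proof of this lemma is the single line ``This follows directly by Theorem \ref{thm:Thight Bound}.'' You have simply made explicit the two details the paper leaves implicit, namely the evaluation $\disp(\overline{a})=1+\tfrac{s}{4}+\tfrac{4-r}{4s}$ with $s=\sqrt{a^2+4}$ via Theorem \ref{thm:MainTheorem} and the monotonicity in $a$ (handled cleanly despite the parity-dependent constant), with the minimum attained at $a=3$ giving exactly $1+4/\sqrt{13}$.
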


\begin{proof}
This follows directly by Theorem \ref{thm:Thight Bound}.
\end{proof}

The following lemma is a well-known, useful tool for manipulating continued fractions’ coefficients we will use heavily without reference in the upcoming lemmas. 
\begin{lemma}\label{lem:We are dumb}
Increasing a continued fraction coefficient with an even index makes the number bigger while increasing a continued fraction coefficient with an odd index makes the number smaller, similarly decreasing a continued fraction coefficient with an even index makes the number smaller while decreasing a continued fraction coefficient with an odd index makes the number bigger.
\end{lemma}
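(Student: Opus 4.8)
The plan is to fix an index $k\ge 0$ and regard the value $\Delta=[a_0,a_1,a_2,\dots]$ as a function of the single coefficient $a_k$, with all other coefficients held fixed, and to show that this function is strictly monotone with the sign of its derivative equal to $(-1)^k$. The four assertions of the lemma are then immediate: for even $k$ the value is increasing in $a_k$, and for odd $k$ it is decreasing, with the decreasing-coefficient statements following by reversal.

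First I would isolate the dependence on $a_k$. Writing $t=\Delta_k=[a_k,a_{k+1},a_{k+2},\dots]$ for the tail, the identity in Lemma \ref{lem: Delta i}, rearranged, yields the M\"obius representation
\[
\Delta=\frac{p_k\,t+p_{k-1}}{q_k\,t+q_{k-1}}.
\]
The crucial point---and the step I would be most careful about---is that the convergent data $p_k,p_{k-1},q_k,q_{k-1}$ depend only on $a_0,\dots,a_{k-1}$: by the defining recursions $p_k=a_{k-1}p_{k-1}+p_{k-2}$ and $q_k=a_{k-1}q_{k-1}+q_{k-2}$, none of these four quantities involves $a_k$. Consequently all of the dependence of $\Delta$ on $a_k$ is funnelled through $t$, and since $t=a_k+1/\Delta_{k+1}$ with $\Delta_{k+1}>0$ fixed, increasing (resp.\ decreasing) $a_k$ strictly increases (resp.\ decreases) $t$.

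Next I would differentiate the M\"obius map with respect to $t$. A direct computation gives
\[
\frac{d\Delta}{dt}=\frac{p_kq_{k-1}-p_{k-1}q_k}{(q_k t+q_{k-1})^2}=\frac{(-1)^k}{(q_k t+q_{k-1})^2},
\]
where the numerator is evaluated using \eqref{eq:Convergents Fact 2}. Since $t=\Delta_k>0$ and the denominators satisfy $q_k t+q_{k-1}>0$ throughout the relevant range (indeed $q_k\ge 1$ for $k\ge 1$, while $q_{-1}=1$ handles $k=0$), the map $t\mapsto\Delta$ is strictly monotone with the sign of its derivative equal to $(-1)^k$ everywhere on $t>0$. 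Combining this with the monotone dependence of $t$ on $a_k$ via the chain rule gives that the sign of $d\Delta/da_k$ equals $(-1)^k$, which is exactly the claim: increasing an even-indexed coefficient makes $\Delta$ bigger while increasing an odd-indexed coefficient makes it smaller, and the two decreasing statements follow at once. The case $k=0$ reduces to the transparent observation $\Delta=a_0+[0,a_1,a_2,\dots]$, consistent with $(-1)^0=+1$.
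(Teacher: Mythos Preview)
Your proof is correct. The paper does not actually prove this lemma; it is introduced as ``a well-known, useful tool'' and left without argument. Your approach---isolating the tail $t=\Delta_k$ via the M\"obius representation $\Delta=(p_k t+p_{k-1})/(q_k t+q_{k-1})$, observing that under the paper's indexing convention $p_k,p_{k-1},q_k,q_{k-1}$ depend only on $a_0,\dots,a_{k-1}$, and reading off the sign of $d\Delta/dt$ from the determinant identity \eqref{eq:Convergents Fact 2}---is clean and complete. The only cosmetic remark is that since the $a_k$ are positive integers you are really inferring monotonicity of a discrete function from monotonicity of the ambient real-variable map $t\mapsto\Delta$; this is of course valid and your phrasing already makes the two steps (monotone $a_k\mapsto t$, monotone $t\mapsto\Delta$) separately clear.
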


In the next seven lemmas we are going to prove that the only lattices that can have dispersion less than $\disp(\overline{1},2,2,\overline{1})$ are those claimed in the theorem. Each lemma shows that if a sequence consisting of $1$'s and $2$'s contains a particular pattern, then its dispersion will be too big.  Our strategy is to apply Corollary \ref{cor:sequence correspondence} to re-index the sequence so that the pattern to be eliminated is roughly centered on $a_0$ and then apply Lemma \ref{lem:Normalized Volume of a Box} obtain a bound on $\vol(B_1)$ by minimizing $\Delta$ and $\widetilde\Delta$ under the assumption that they do not contain a pattern that we have previously demonstrated leads to a dispersion bigger than we want.

% More precisely the main method will be to assume that a certain component is part of the sequence and then giving some values $\Delta'$ and $-\widetilde\Delta'$ such that

% \[
% \frac{(\Delta)(2-\widetilde\Delta)}{\Delta-\widetilde\Delta} >\frac{(\Delta')(2-\widetilde\Delta')}{\Delta'-\widetilde\Delta'} >   \disp(\overline{1},2,2,\overline{1}) .
% \]

\begin{lemma}
Assume that $\mathfrak{A}=(a_i)_{i\in\Z}$ is a $(1,2)$-sequence. If the sequence contains the pattern $(2,1,2)$, then 
\[
\disp(\mathfrak{A}) \ge \frac{1}{23}\Big(41+4\sqrt{3}\Big)=2.08383\dots
\]
In particular, if a sequence is to have dispersion smaller than the limit in Theorem \ref{thm:BestLattices}, it cannot contain any isolated $1$'s.
\end{lemma}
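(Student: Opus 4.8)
The plan is to bound $\disp(\mathfrak A)$ from below by the normalized area of a \emph{single} maximal empty box. By Corollary \ref{cor:sequence correspondence} we may re-index $\mathfrak A$ so that the offending block sits at $(a_{-2},a_{-1},a_0)=(2,1,2)$. Since $a_0=2$, the value $j=1$ is admissible, so Lemma \ref{lem:Normalized Volume of a Box} gives
\[
\frac{\vol(B_1)}{\det(\Lambda)}=\frac{(1-1+\Delta)(1+1-\widetilde\Delta)}{\Delta-\widetilde\Delta}=\frac{\Delta(2-\widetilde\Delta)}{\Delta-\widetilde\Delta},
\]
where $\Delta=[2,a_1,a_2,\dots]$ and $-\widetilde\Delta=[0,1,2,a_{-3},a_{-4},\dots]$. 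Because $\disp(\mathfrak A)$ is a supremum over all boxes, it suffices to bound this one quantity from below.

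Next I would invoke the monotonicity half of Lemma \ref{lem:Normalized Volume of a Box}: for $0<j<a_0$ the displayed expression is increasing in both $\Delta$ and $-\widetilde\Delta$. Hence the actual generators dominate the quantity obtained by replacing $\Delta$ and $-\widetilde\Delta$ by the \emph{smallest} values consistent with the $(1,2)$-constraint and the fixed block $(a_{-2},a_{-1},a_0)=(2,1,2)$. The two minimizations involve disjoint, still-free tails — $a_1,a_2,\dots$ for $\Delta$ and $a_{-3},a_{-4},\dots$ for $-\widetilde\Delta$ — so they may be performed independently. Using Lemma \ref{lem:We are dumb}, minimizing a continued fraction whose entries lie in $\{1,2\}$ forces every free even-indexed coefficient to $1$ and every free odd-indexed coefficient to $2$, producing the periodic tail $\overline{2,1}$ and giving
\[
\Delta_{\min}=[2,\overline{2,1}]\qquad\text{and}\qquad -\widetilde\Delta_{\min}=[0,1,2,\overline{2,1}].
\]

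Finally I would evaluate these surds. Since $[\overline{2,1}]$ satisfies $x^2-2x-2=0$, one gets $[\overline{2,1}]=1+\sqrt3$, whence $\Delta_{\min}=\tfrac{3+\sqrt3}{2}$ and $-\widetilde\Delta_{\min}=\tfrac{6+\sqrt3}{11}$. Substituting and clearing the surd from the denominator yields
\[
\frac{\Delta_{\min}(2-\widetilde\Delta_{\min})}{\Delta_{\min}-\widetilde\Delta_{\min}}=\frac{87+31\sqrt3}{45+13\sqrt3}=\frac{41+4\sqrt3}{23},
\]
which is the asserted bound, so $\disp(\mathfrak A)\ge\tfrac{1}{23}(41+4\sqrt3)$. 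The only substantive work is the continued-fraction optimization together with the surd arithmetic, both routine; the one point needing care is verifying that fixing the block $(2,1,2)$ genuinely leaves the two relevant tails free and disjoint, so that the minima may be taken simultaneously and independently, and that the direction of inequality in Lemma \ref{lem:We are dumb} is applied correctly in each tail. For the concluding remark, note $\tfrac{1}{23}(41+4\sqrt3)=2.08383\dots>2.07868\dots=\tfrac{4+\sqrt5}{3}$, the limit in Theorem \ref{thm:BestLattices}, so any sequence with dispersion below that limit cannot contain the pattern $(2,1,2)$, i.e.\ it has no isolated $1$'s.
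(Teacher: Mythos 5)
Your proposal is correct and follows essentially the same route as the paper: re-index so the block $(2,1,2)$ sits at indices $-2,-1,0$, bound $\vol(B_1)/\det(\Lambda)$ from below via the monotonicity in Lemma \ref{lem:Normalized Volume of a Box} by the minimizers $\Delta'=[2,\overline{2,1}]$ and $-\widetilde\Delta'=[0,1,2,\overline{2,1}]$, and evaluate to $\tfrac{1}{23}(41+4\sqrt3)$. You simply make explicit the surd arithmetic and the independence of the two tail minimizations, which the paper leaves implicit.
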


\begin{proof}
By Corollary \ref{cor:sequence correspondence}, we may re-index the sequence so that $a_{-2}=2,a_{-1}=1,a_{0}=2$. With the restriction on $\mathfrak{A}$, the minimum possible values for $\Delta$ and $\widetilde\Delta$ are
\begin{align*}
\Delta'=[2,\overline{2,1}] \quad \text{and} \quad -\widetilde\Delta'=[0,1,2,\overline{2,1}].   
\end{align*}
Applying Lemma \ref{lem:Normalized Volume of a Box} we estimate
\[
   \frac{\vol(B_1)}{\det(\Lambda)} 
    \ge \frac{(\Delta')(2-\widetilde\Delta')}{\Delta'-\widetilde\Delta'} =\frac{1}{23}(41+4\sqrt{3})=2.08383\ldots >\disp(\overline{1},2,2,\overline{1}). \qedhere
\]
\end{proof}

\begin{lemma}
Assume that $\mathfrak{A}=(a_i)_{i\in\Z}$ is a $(1,2)$-sequence and contains no isolated $1$'s. If the sequence contains the pattern $(1,2,1)$, then 
\[
\disp(\mathfrak{A}) \ge \frac{4}{15}\Big(3+2\sqrt{6}\Big)=2.10639\dots
\]
In particular, if a sequence is to have dispersion smaller than the limit in Theorem \ref{thm:BestLattices}, it cannot contain any isolated $2$'s.
\end{lemma}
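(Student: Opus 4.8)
The plan is to follow the strategy stated just before these lemmas: re-index so the forbidden pattern sits at the origin, read off the relevant box volume via Lemma \ref{lem:Normalized Volume of a Box}, and bound it from below by minimizing the two generators subject to the constraints already proved.

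First I would use Corollary \ref{cor:sequence correspondence} to re-index $\mathfrak A$ so that the pattern $(1,2,1)$ is centered at $a_0$, i.e.\ $a_{-1}=1$, $a_0=2$, $a_1=1$. Since $\mathfrak A$ is a $(1,2)$-sequence containing no isolated $1$'s and $a_0=2$, the $1$'s at $a_{\pm1}$ must each be adjacent to a further $1$, which forces $a_{-2}=a_2=1$. Hence $\Delta=[2,1,1,a_3,a_4,\dots]$ and $-\widetilde\Delta=[0,1,1,a_{-3},a_{-4},\dots]$. Because $a_0=2$ the value $j=1$ is admissible, so Lemma \ref{lem:Normalized Volume of a Box} gives
\[
\disp(\mathfrak A)\ge\frac{\vol(B_1)}{\det(\Lambda)}=\frac{\Delta(2-\widetilde\Delta)}{\Delta-\widetilde\Delta},
\]
a quantity increasing in $\Delta$ and in $-\widetilde\Delta$. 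Thus it suffices to replace $\Delta,\widetilde\Delta$ by the smallest values $\Delta',\widetilde\Delta'$ compatible with the constraints and show the resulting number already exceeds the limit.

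The crux is pinning down these minimizers. By Lemma \ref{lem:We are dumb}, minimizing $\Delta=[2,1,1,a_3,\dots]$ amounts to pushing the free odd-indexed coefficients $a_3,a_5,\dots$ up to $2$ and the free even-indexed coefficients $a_4,a_6,\dots$ down to $1$ (equivalently, maximizing $\Delta_3=[a_3,a_4,\dots]$, since $\Delta=2+\frac{\Delta_3+1}{2\Delta_3+1}$ is decreasing in $\Delta_3$). The naive alternating tail $2,1,2,1,\dots$ is, however, infeasible: an even-indexed $1$ flanked by two $2$'s would be an isolated $1$. The cheapest feasible repair forces the $1$'s into blocks, and comparing the admissible periodic tails shows the optimum is the period-four block $\overline{2,1,1,1}$, giving
\[
\Delta'=[2,1,1,\overline{2,1,1,1}]\qquad\text{and, symmetrically,}\qquad -\widetilde\Delta'=[0,1,1,\overline{2,1,1,1}].
\]
I expect this combinatorial optimization — reconciling the parity-based monotonicity of Lemma \ref{lem:We are dumb} with the global no-isolated-$1$'s constraint — to be the main obstacle; everything else is bookkeeping.

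Finally I would compute. Solving the quadratic arising from $\Delta_3=[\overline{2,1,1,1}]$ yields $\Delta_3=1+\tfrac23\sqrt6$, whence $\Delta'=\tfrac15(8+2\sqrt6)$ and $\widetilde\Delta'=\tfrac15(2-2\sqrt6)$. Here there is a convenient coincidence, $2-\widetilde\Delta'=\Delta'$, so the bound collapses to $\Delta'^2/(\Delta'-\widetilde\Delta')$; plugging in and rationalizing gives
\[
\frac{\vol(B_1)}{\det(\Lambda)}\ge\frac{\Delta'(2-\widetilde\Delta')}{\Delta'-\widetilde\Delta'}=\frac{4}{15}\big(3+2\sqrt6\big)=2.10639\dots,
\]
which is strictly larger than the limit $2.07868\dots$ of Theorem \ref{thm:BestLattices}. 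Consequently any sequence with dispersion below that limit contains no occurrence of $(1,2,1)$, i.e.\ has no isolated $2$'s, as claimed.
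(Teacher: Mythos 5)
Your proposal is correct and follows essentially the same route as the paper: re-index so that $(1,2,1)$ sits at $a_{-1},a_0,a_1$, deduce $a_{-2}=a_2=1$ from the no-isolated-$1$'s hypothesis, lower-bound $\vol(B_1)/\det(\Lambda)$ via Lemma \ref{lem:Normalized Volume of a Box} by the minimizers $\Delta'=[2,1,1,\overline{2,1,1,1}]$ and $-\widetilde\Delta'=[0,1,1,\overline{2,1,1,1}]$, and evaluate to $\tfrac{4}{15}(3+2\sqrt{6})$. Your explicit greedy/parity justification (via Lemma \ref{lem:We are dumb}) of why these are the feasible minimizers is a welcome elaboration of a step the paper only asserts, and your arithmetic checks out.
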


\begin{proof}
Again by index shifting we can assume $a_{-1}=1,a_{0}=2,a_{1}=1$ and by the assumptions in the statement we must have $a_{-2}=1,a_{2}=1$. Under the constrains of the lemma, the minimum possible values for $\Delta$ and $\widetilde\Delta$ are
\begin{align*}
\Delta'=[2,1,1,\overline{2,1,1,1}] \quad \text{and} \quad -\widetilde\Delta'=[0,1,1,\overline{2,1,1,1}].
\end{align*}
Applying Lemma \ref{lem:Normalized Volume of a Box} we estimate
\[
    \frac{\vol(B_1)}{\det(\Lambda)} 
   \ge \frac{(\Delta')(2-\widetilde\Delta')}{\Delta'-\widetilde\Delta'} =\frac{4}{15}(3+2\sqrt{6})=2.10639\dots \qedhere
\]
\end{proof}

\begin{lemma}
Assume that $\mathfrak{A}=(a_i)_{i\in\Z}$ is a $(1,2)$-sequence and contains neither isolated $1$'s nor $2$'s. If the sequence contains contains at least three consecutive $2$'s, then 
\[
\disp(\mathfrak{A}) \ge \frac{2}{2519}(1157+101\sqrt{210})=2.08068\dots
\]
In particular, if a sequence is to have dispersion smaller than the limit in Theorem \ref{thm:BestLattices}, then the $2$'s appear in isolated pairs.
\end{lemma}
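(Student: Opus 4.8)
The plan is to run the same machine as the three preceding lemmas: re-index so that the forbidden block sits at a fixed location, read off the least values of $\Delta$ and $-\widetilde\Delta$ that the standing hypotheses allow, and feed these into the monotone volume formula of Lemma \ref{lem:Normalized Volume of a Box}. Since $\mathfrak A$ contains a maximal run of at least three $2$'s, I would use Corollary \ref{cor:sequence correspondence} to re-index so that $a_0$ is the \emph{first} entry of such a run, giving $a_0=a_1=a_2=2$; because $\mathfrak A$ has no isolated $1$'s, the entry preceding the run must come in a pair, forcing $a_{-1}=a_{-2}=1$. This makes $B_1$ an admissible box ($0<1<a_0$), and by Lemma \ref{lem:Normalized Volume of a Box} it suffices to bound $\frac{\Delta(2-\widetilde\Delta)}{\Delta-\widetilde\Delta}$ from below, where now $\Delta=[2,2,2,\dots]$ and $-\widetilde\Delta=[0,1,1,\dots]$.

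The structural heart of the proof is to pin down the minimal admissible $\Delta$ and $-\widetilde\Delta$. Because the volume is increasing in both $\Delta$ and $-\widetilde\Delta$, and because $[2,2,2,x]$ as well as $1/[1,1,x]$ are decreasing in the tail value $x$, minimizing the volume amounts to \emph{maximizing} the admissible continuation $x=[a_3,a_4,\dots]$, where admissibility means (by Lemma \ref{lem:We are dumb} and the no-isolated hypotheses) that every run of $1$'s and of $2$'s has length at least $2$. I would show that the largest such tail is the purely periodic $x=[\overline{2,2,2,1,1,1}]$: the run-length constraint forces a ``parity cost'' against the optimal alternation, and this cost is minimized, not by the all-$2$'s tail nor by the shortest pattern $[\overline{2,2,1,1}]$, but by giving every run the common length $3$. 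The period matrix $\left(\begin{smallmatrix}2&1\\1&0\end{smallmatrix}\right)^{3}\left(\begin{smallmatrix}1&1\\1&0\end{smallmatrix}\right)^{3}=\left(\begin{smallmatrix}46&29\\19&12\end{smallmatrix}\right)$ has trace $58$ and determinant $1$, whence $x=[\overline{2,2,2,1,1,1}]=\frac{17+2\sqrt{210}}{19}$; this is exactly where the surd $\sqrt{210}$ is born.

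With $\Delta'=[2,2,2,x]$ and $-\widetilde\Delta'=1/[1,1,x]$ identified as explicit elements of $\Q(\sqrt{210})$, the last step is to substitute them into $\frac{\Delta'(2-\widetilde\Delta')}{\Delta'-\widetilde\Delta'}$ and simplify in $\Q(\sqrt{210})$ to the closed form $\tfrac{2}{2519}\bigl(1157+101\sqrt{210}\bigr)$, and then to observe numerically that this exceeds the limiting value $(4+\sqrt5)/3=2.07868\dots$ of Theorem \ref{thm:BestLattices}. The ``in particular'' clause is then immediate: a sequence whose dispersion is below this limit cannot contain three consecutive $2$'s, so its $2$'s occur in isolated pairs.

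The main obstacle is the extremality argument of the second paragraph. In the earlier lemmas the absence of run constraints let the minimizing $\Delta$ be the freely alternating $[2,\overline{2,1}]$, which is elementary; here the constraint ``all runs have length $\ge 2$'' genuinely interacts with the even/odd rule of Lemma \ref{lem:We are dumb}, and one must prove that length-$3$ runs are optimal rather than merely comparing a handful of candidates (the competing patterns $[\overline{2,2,1,1}]$ and $[\overline{2,2,2,2,1,1,1,1}]$ are both strictly worse, but a clean monotonicity-in-run-length argument is needed to rule out all alternatives). Getting this extremal tail exactly right is what fixes both the appearance of $\sqrt{210}$ and the precise constant; once it is settled, the remaining field arithmetic in $\Q(\sqrt{210})$ is unpleasant but routine, and everything else is the by-now-familiar re-indexing together with Lemma \ref{lem:Normalized Volume of a Box}.
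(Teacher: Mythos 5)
Your overall strategy is the paper's: re-index so the run of $2$'s sits at the origin, determine the minimal admissible $\Delta$ and $-\widetilde\Delta$, and feed them into Lemma \ref{lem:Normalized Volume of a Box}. The paper places the window at the other end of the run ($a_{-2}=a_{-1}=a_0=2$, $a_1=a_2=1$) and gets $\Delta'=[2,1,1,\overline{2,2,2,1,1,1}]$, $-\widetilde\Delta'=[0,\overline{2,2,2,1,1,1}]$; since for $j=1$, $a_0=2$ the quantity $(2+\delta)(2+v)/(2+\delta+v)$ is symmetric in the two tails, your placement is equally legitimate, and your backward tail $-\widetilde\Delta'=[0,1,1,\overline{2,2,2,1,1,1}]$ is correct.

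Your forward tail, however, is wrong, and this is exactly the step you identify as the structural heart. You maximize $[a_3,a_4,\dots]$ as if it were a standalone sequence in which every run must have length at least $2$, which does give $[\overline{2,2,2,1,1,1}]$. But admissibility is a property of the whole two-sided sequence: the tail is glued onto the run $a_0=a_1=a_2=2$, so a single leading $2$ of the tail is not isolated, and after $a_3=2$ the continuation may switch immediately to $1$'s. The lexicographically maximal admissible continuation is therefore $[2,\overline{1,1,1,2,2,2}]=2.6307\dots$, strictly larger than your $[\overline{2,2,2,1,1,1}]=2.4202\dots$, so the minimal admissible $\Delta$ is $[2,\overline{2,2,2,1,1,1}]=[2,2,2,2,\overline{1,1,1,2,2,2}]$ rather than your $[2,2,2,\overline{2,2,2,1,1,1}]$. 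Numerically your configuration evaluates to $2.08086\dots$, not to $\tfrac{2}{2519}(1157+101\sqrt{210})=2.08068\dots$, so the claimed closed form would not emerge from your data; worse, the resulting ``lower bound'' is false for the admissible sequence realizing the true minimum. With the corrected tail the two tails become $1/x$ and $(x+1)/(2x+1)$ with $x=(17+2\sqrt{210})/19$, and the computation does yield the stated constant; the rest of your argument then goes through. A minor additional remark: the monotonicity-in-run-length argument you anticipate needing is unnecessary --- because continued fractions are ordered lexicographically with alternating sign, a greedy coefficient-by-coefficient application of Lemma \ref{lem:We are dumb}, subject only to extendability of the run constraints, determines the extremal tails outright, and this is also what silently underlies the paper's choices.
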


\begin{proof}
With $a_{-2}=2,a_{-1}=2,a_{0}=2,a_{1}=a_{1}=1$, the smallest that $\Delta$ and $\widetilde\Delta$ can be given the assumptions on $\mathfrak{A}$ are
\begin{align*}
\Delta'=[2,1,1,\overline{2,2,2,1,1,1}] \quad \text{and} \quad -\widetilde\Delta'=[0,2,2,2,\overline{1,1,1,2,2,2}],
\end{align*}
yielding
\[
    \frac{\vol(B_1)}{\det(\Lambda)} 
    \ge \frac{(\Delta')(2-\widetilde\Delta')}{\Delta'-\widetilde\Delta'} =\frac{2}{2519}(1157+101\sqrt{210})=2.08068\dots \qedhere
\]
\end{proof}

\begin{lemma}
Assume that $\mathfrak{A}=(a_i)_{i\in\Z}$ is a $(1,2)$-sequence, has no isolated $1$'s, and the $2$'s only occur in pairs separated by $1$'s. If the lengths of the stretches of $1$'s that separate the $2$'s do not all have the same parity, then  
\[
    \disp(\mathfrak{A}) \ge \frac{4+5^{1/2}}{3}.
\]
\end{lemma}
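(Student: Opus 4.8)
The plan is to reuse the template of the preceding lemmas: locate a forced finite pattern, re-index (Corollary~\ref{cor:sequence correspondence}) so it sits around $a_0$, and bound $\vol(B_1)$ from below through Lemma~\ref{lem:Normalized Volume of a Box}. The only genuinely new step is turning the \emph{global} parity hypothesis into a usable local configuration. Since the $1$-stretches do not all share a parity and consecutive stretches are separated by a single pair of $2$'s, there must be two adjacent stretches of opposite parity (if every adjacent pair agreed in parity, all stretches would). Reading left to right this transition is either even$\to$odd or odd$\to$even; transposing the sequence via Corollary~\ref{cor:sequence correspondence} swaps these two possibilities (it mirrors each pair's left and right stretch while preserving their lengths), so after possibly transposing I may assume the transition is even$\to$odd.

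Next I would re-index so that the separating pair occupies positions $-1,0$, i.e.\ $a_{-1}=a_0=2$, the forward coefficients $a_1,a_2,\dots$ give the odd stretch (length $\geq 3$) followed by the rest, and the backward coefficients $a_{-2},a_{-3},\dots$ give the even stretch (length $\geq 2$). Writing $r\geq 3$ (odd) and $l\geq 2$ (even) for the two stretch lengths, this reads
\[
\Delta=[2,\underbrace{1,\dots,1}_{r},z]\qquad\text{and}\qquad -\widetilde\Delta=[0,2,\underbrace{1,\dots,1}_{l},z'],
\]
where $z$ and $z'$ are the tails beginning at the next pair of $2$'s, so that $z,z'=[2,2,\dots]>2>\varphi$.

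The heart of the argument is to show $\Delta>\varphi^{2}$ and $-\widetilde\Delta>\varphi^{-2}$. I would exploit the identities $\varphi^{2}=[2,\underbrace{1,\dots,1}_{r},\varphi]$ and $\varphi^{-2}=[0,2,\underbrace{1,\dots,1}_{l},\varphi]$, valid because appending the tail $\varphi=[\overline 1]$ merely prolongs the block of $1$'s. Comparing $\Delta$ with $[2,1,\dots,1,\varphi]$, the tail sits at index $r+1$, which is even since $r$ is odd; comparing $-\widetilde\Delta$ with $[0,2,1,\dots,1,\varphi]$, the tail sits at index $l+2$, which is even since $l$ is even. In both cases Lemma~\ref{lem:We are dumb} says that raising the value at an even index increases the continued fraction, and $z,z'>\varphi$, so $\Delta>\varphi^{2}$ and $-\widetilde\Delta>\varphi^{-2}$. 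This is precisely why the odd and even stretches must be placed on opposite sides of the chosen pair: the reverse orientation would put both tails at odd indices and push $\Delta,-\widetilde\Delta$ below $\varphi^{\pm2}$.

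Finally, applying Lemma~\ref{lem:Normalized Volume of a Box} with $i=0$, $j=1$ (so that $0<j<a_0=2$) gives
\[
\frac{\vol(B_1)}{\det(\Lambda)}=\frac{\Delta(2-\widetilde\Delta)}{\Delta-\widetilde\Delta},
\]
which that lemma shows is increasing in $\Delta$ and in $-\widetilde\Delta$ (here $\Delta>2$ and $-\widetilde\Delta>0$). Combining this monotonicity with $\Delta>\varphi^{2}$ and $-\widetilde\Delta>\varphi^{-2}$ yields
\[
\disp(\mathfrak A)\geq\frac{\vol(B_1)}{\det(\Lambda)}>\frac{\varphi^{2}\bigl(2+\varphi^{-2}\bigr)}{\varphi^{2}+\varphi^{-2}}=\frac{2\varphi^{2}+1}{3}=\frac{4+\sqrt5}{3},
\]
using $\varphi^{2}+\varphi^{-2}=3$ and $2\varphi^{2}=3+\sqrt5$. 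I expect the main obstacle to be the reduction step, namely making rigorous that mixed parity forces an even$\to$odd transition up to transposition and that the transposition of Corollary~\ref{cor:sequence correspondence} acts on the configuration exactly as claimed; the accompanying parity bookkeeping (that an odd forward stretch and an even backward stretch are what send the two tails to even indices) is the other delicate point. Once these are in place the concluding inequality is immediate and in fact strict, since $z,z'$ are bounded away from $\varphi$.
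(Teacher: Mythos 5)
Your proof is correct and follows essentially the same route as the paper's: re-index so that a pair of $2$'s sits at indices $-1,0$ with an odd stretch of $1$'s in the forward direction and an even stretch backward, bound $\Delta$ and $-\widetilde\Delta$ from below by $\varphi^{2}=[2,\overline{1}]$ and $\varphi^{-2}=[0,2,\overline{1}]$, and evaluate $\vol(B_1)/\det(\Lambda)$ via Lemma \ref{lem:Normalized Volume of a Box}. The paper merely asserts the ``without loss of generality'' configuration and that these are the extremal values, whereas you supply the justification (existence of an adjacent opposite-parity pair of stretches, the transposition, and the parity of the tail positions that makes the monotonicity go the right way), which fills in exactly the steps the paper leaves implicit.
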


\begin{proof}
Without loss of generality $a_{-2}=1,a_{-1}=2,a_{0}=2,a_{1}=a_{2}=\dots=a_{2k+1}=1$ for some $k\in \N$ and that the first stretch of $1$'s in negative direction is even, i.e. $a_{-2}=a_{-3}=\dots=a_{-2n+1}=1$ and $a_{-2n}=2$. The values
\begin{align*}
\Delta'=[2,\overline 1] \quad \text{and} \quad -\widetilde\Delta'=[0,2,\overline 1] 
\end{align*}
are as small as possible under the constraints in the statement resulting in
\[
   \frac{\vol(B_1)}{\det(\Lambda)} 
   \ge \frac{(\Delta')(2-\widetilde\Delta')}{\Delta'-\widetilde\Delta'} =\frac{4+5^{1/2}}{3}
    =\disp(2,2,\overline{1})>\disp(\overline{2,2,1,1}). \qedhere
\] 
\end{proof}

\begin{lemma}\label{lem:oneallodd}
Assume that $\mathfrak{A}=(a_i)_{i\in\Z}$ is a $(1,2)$-sequence has no isolated $1$'s, the $2$'s only occur in pairs separated by $1$'s, and the lengths of the stretches of $1$'s that separate the $2$'s all have the same parity. If the lengths of the stretches of $1$'s that separate the $2$'s are all odd, then  
\[
    \disp(\mathfrak{A}) \ge \frac{4+5^{1/2}}{3}.
\]
\end{lemma}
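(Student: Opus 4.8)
The plan is to follow the same template as the preceding lemmas but to isolate the one feature that makes the all-odd case genuinely different. By Corollary~\ref{cor:sequence correspondence} I would re-index so that one pair of $2$'s sits at $a_{-1}=a_0=2$ and study the box $B_1$. By Lemma~\ref{lem:Normalized Volume of a Box} its normalized volume is $\frac{\vol(B_1)}{\det\Lambda}=\frac{\Delta(2-\widetilde\Delta)}{\Delta-\widetilde\Delta}$, which increases in both $\Delta$ and $-\widetilde\Delta$; here $\Delta=[2,1^{m'},2,2,\dots]$ and $-\widetilde\Delta=1/[2,1^{m},2,2,\dots]$, where $m,m'$ are the (odd) lengths of the runs of $1$'s flanking the central pair. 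A short monotonicity argument (Lemma~\ref{lem:We are dumb}) shows that, because the forward run is odd and its continuation exceeds $\varphi$, one has $[1^{m'},2,2,\dots]<\varphi$ and hence $\Delta>[2,\overline1]=\varphi^2$ at \emph{every} pair.

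The point at which this case departs from the mixed-parity lemma is exactly the estimate on $-\widetilde\Delta$. There one could arrange the backward run to be \emph{even}, which forces $-\widetilde\Delta>\varphi^{-2}$, and then the two independent minimizations $\Delta\to\varphi^2$ and $-\widetilde\Delta\to\varphi^{-2}$ feed into the formula to give precisely $\frac{4+5^{1/2}}{3}$. With all runs odd this is unavailable: a backward odd run lets $-\widetilde\Delta$ fall strictly below $\varphi^{-2}$, so minimizing $\Delta$ and $-\widetilde\Delta$ separately \emph{undershoots} the target. Consequently no single box $B_1$ can be bounded in isolation; the inequality must be extracted from the supremum over all the pairs.

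To get around the coupling I would pass to the self-conjugate locus $\widetilde\Delta=-\Delta^{-1}$, on which $\frac{\vol(B_1)}{\det\Lambda}=\frac{2\Delta+1}{\Delta+\Delta^{-1}}$. For a purely periodic all-odd sequence $[\overline{2,2,1^{2k+1}}]$ the forward and backward expansions at the second $2$ of a pair coincide, so the box is self-conjugate with $\Delta=[2,\overline{1^{2k+1},2,2}]>\varphi^2$; Lemma~\ref{lem:FactorChangingDispersion} shows $\frac{2\Delta+1}{\Delta+\Delta^{-1}}$ is increasing for $2<\Delta<3$ and equals $\frac{4+5^{1/2}}{3}$ at $\Delta=\varphi^2=[2,\overline1]$, so such a box already exceeds the target. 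For a general, non-symmetric all-odd sequence I would select the pair whose following run of $1$'s is shortest (this maximizes $\Delta$) and compare the value of its box against the self-conjugate value, using that lengthening any run only drives $\Delta$ and $-\widetilde\Delta$ toward their common limit $(\varphi^2,-\varphi^{-2})$ from the favorable side.

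The hard part is precisely this last step. Because $\Delta$ and $-\widetilde\Delta$ are controlled by \emph{different} runs they cannot be minimized independently, and the value $\frac{4+5^{1/2}}{3}$ is only approached in the limit of infinitely long runs, i.e.\ by the boundary lattice $[2,\overline1]$. The crux is therefore to show that among the boxes attached to the various pairs at least one has normalized volume at least $\frac{4+5^{1/2}}{3}$; I expect this to reduce to the monotonicity supplied by Lemma~\ref{lem:FactorChangingDispersion} together with a finite check of the shortest-run pair, in the same spirit as the adjacent lemmas and verifiable by computer.
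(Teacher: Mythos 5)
Your treatment of the symmetric case is correct and coincides with the paper's endgame: on the self-conjugate locus $\widetilde\Delta=-\Delta^{-1}$ the normalized volume of $B_1$ is $\frac{2\Delta+1}{\Delta+\Delta^{-1}}$, Lemma~\ref{lem:FactorChangingDispersion} makes this increasing for $2<\Delta<3$, every all-odd configuration has $\Delta>[2,\overline 1]=\varphi^2$, and the value at $\varphi^2$ is exactly $\tfrac{4+\sqrt5}{3}$, so the palindromic periodic sequences are done. You have also correctly diagnosed why the template of the adjacent lemmas fails here: an odd backward run forces $-\widetilde\Delta<\varphi^{-2}$, so minimizing $\Delta$ and $-\widetilde\Delta$ independently undershoots the target.

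However, the step you defer --- reducing a general, non-symmetric all-odd sequence to the self-conjugate case --- is the actual content of the lemma, and the route you sketch for it does not close. Selecting ``the pair whose following run is shortest'' does not yield $-\widetilde\Delta\ge\Delta^{-1}$ when run lengths tie (the comparison then depends on coefficients arbitrarily deep in the sequence), and your claim that lengthening any run drives $(\Delta,-\widetilde\Delta)$ toward $(\varphi^2,-\varphi^{-2})$ ``from the favorable side'' contradicts your own earlier, correct observation that $-\widetilde\Delta$ lies on the unfavorable side; no finite computer check is identified that would repair this. The missing idea is an orientation argument rather than a run-length extremization: for any fixed pair of $2$'s, reversing the sequence about that pair (allowed by Corollary~\ref{cor:sequence correspondence}) swaps $\Delta$ with $-\widetilde\Delta^{-1}$, so at least one of the two readings satisfies $\Delta\ge-\widetilde\Delta^{-1}$, i.e.\ $-\widetilde\Delta\ge\Delta^{-1}$. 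Since $\frac{\Delta(2-\widetilde\Delta)}{\Delta-\widetilde\Delta}$ is increasing in $-\widetilde\Delta$ for $\Delta>2$ (Lemma~\ref{lem:Normalized Volume of a Box}), this bounds the box from below by the self-conjugate value $\frac{2\Delta+1}{\Delta+\Delta^{-1}}$, and your symmetric-case computation then finishes. The paper implements exactly this reduction by choosing a pair whose backward run is strictly longer than its forward run (possible unless all runs are equal, which is your periodic case) and replacing $\Delta$ by $-\widetilde\Delta^{-1}<\Delta$, which only shrinks the box.
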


\begin{proof}
By transforming the sequence, if necessary, we may assume that $\Delta$ and $\widetilde \Delta$ are of the form
\begin{align*}
\Delta=[2,\underbrace{1,\dots,1}_{2k+1},2,2,\Delta_{2k+4}] \quad \text{and} \quad -\widetilde\Delta=[0,2,\underbrace{1,\dots,1}_{2l+1},2,2,-\widetilde\Delta_{-2l-5}],
\end{align*}
with $l>k$.
Then we have that $\vol(B_{-1})<\vol(B_{1})$. Now
\[
\Delta'=[2,\underbrace{1,\dots,1}_{2l+1},2,2,X]<\Delta,
\]
with $X$ being an arbitrary $(1,2)$-sequence. If $\mathfrak{A}'$ is the sequence obtained by replacing $\Delta$ by $\Delta'=-\widetilde\Delta^{-1}$ then we have
\[
\vol(B_{-1})<\vol(B'_{-1})=\vol(B'_{1})<\vol(B_{1}),
\]
with the equality following from the symmetry of the sequence $\mathfrak{A}'$.

Now we want to simultaneously extend the number of initial $1$'s in both the negative and positive direction of the sequence $\mathfrak{A}'$.
Extending the initial stretch of $1$'s in the positive direction has the effect of making $\Delta'$ smaller, while doing the same in the negative direction makes $\widetilde\Delta=-\Delta^{-1}$ larger. By Lemma \ref{lem:FactorChangingDispersion} we see that the former dominates the latter. From this it follows that the smallest possible dispersion under the constraints converges to the dispersion associated to
\[
\Delta'=[2,\overline{1}].
\]
If all the stretches of $1$'s are of same length we see that the assertion follows from the last bit we just did above.
\end{proof}

\begin{lemma}
Assume that $\mathfrak{A}=(a_i)_{i\in\Z}$ is a $(1,2)$-sequence, has no isolated $1$'s, the $2$'s only occur in pairs separated by $1$'s, and the lengths of the stretches of $1$'s that separate the $2$'s are all even. If the sequence is eventually all $1$'s, then
\[
    \disp(\mathfrak{A}) \ge \frac{4+5^{1/2}}{3}.
\]
\end{lemma}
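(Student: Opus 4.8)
The plan is to exploit that, once the sequence is eventually constant, one of the two generators is pinned down exactly, after which a single box already furnishes the desired bound. Since the all-ones sequence $(\overline 1)$ is the golden lattice $\Lambda_5$ treated at the outset of Theorem \ref{thm:BestLattices}, I may assume $\mathfrak A$ contains at least one pair of $2$'s. By the transposition $a_i\mapsto a_{-i}$ of Corollary \ref{cor:sequence correspondence} I may assume the sequence is eventually all $1$'s in the \emph{positive} direction, so that a rightmost pair of $2$'s exists; re-indexing (again by Corollary \ref{cor:sequence correspondence}) I place it at $a_{-1}=a_0=2$ with $a_i=1$ for all $i\ge 1$. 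Then
\[
\Delta=[a_0,a_1,a_2,\dots]=[2,\overline 1]=\varphi^2,
\qquad
-\widetilde\Delta=[0,a_{-1},a_{-2},\dots]=[0,2,a_{-2},a_{-3},\dots].
\]

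Next I would reduce the statement to a single inequality between the generators. Because $a_0=2$, Lemma \ref{lem:Normalized Volume of a Box} applies with $i=0$, $j=1$ and gives
\[
\frac{\vol(B_1)}{\det(\Lambda)}=\frac{(1-1+\Delta)(1+1-\widetilde\Delta)}{\Delta-\widetilde\Delta}=\frac{\Delta\,(2-\widetilde\Delta)}{\Delta-\widetilde\Delta},
\]
a quantity which, by the same lemma, is increasing in $-\widetilde\Delta$ (here $0<j=1<2=a_0$). Evaluating at $\Delta=\varphi^2$ and $-\widetilde\Delta=\varphi^{-2}$ yields exactly $\tfrac{4+\sqrt 5}{3}$, since $\varphi^2+\varphi^{-2}=3$ and $\varphi^2(2+\varphi^{-2})=2\varphi^2+1=4+\sqrt 5$. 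As $\disp(\mathfrak A)\ge \vol(B_1)/\det(\Lambda)$, it therefore suffices to prove
\[
-\widetilde\Delta\ge\varphi^{-2}=[0,2,\overline 1].
\]

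The heart of the argument is this comparison of continued fractions, and it is where the even-length hypothesis is used. Both $-\widetilde\Delta=[0,2,a_{-2},a_{-3},\dots]$ and $\varphi^{-2}=[0,2,1,1,1,\dots]$ begin with $[0,2,\dots]$, and the constraints on $\mathfrak A$ force $a_{-2}=1$, since the pair at $\{-1,0\}$ is isolated. If no further pair of $2$'s occurs, then $a_{-k}=1$ for all $k\ge 2$ and $-\widetilde\Delta=\varphi^{-2}$ exactly, giving equality. Otherwise, writing $-\widetilde\Delta=[b_0,b_1,b_2,\dots]$ with $b_k=a_{-k}$, the first index $k$ at which this expansion departs from the all-ones tail of $\varphi^{-2}$ is the position of the leading $2$ of the next pair. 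Because that pair is separated from the one at $\{-1,0\}$ by a stretch of $1$'s of \emph{even} length, this leading $2$ occupies an \emph{even} continued-fraction index, where $b_k=2>1$; by the monotonicity rule of Lemma \ref{lem:We are dumb}, a strictly larger coefficient at the first even index of disagreement makes the value strictly larger, so $-\widetilde\Delta>\varphi^{-2}$. In either case $-\widetilde\Delta\ge\varphi^{-2}$, which completes the proof.

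The step I expect to be the main obstacle is precisely the parity bookkeeping in the last paragraph: one must fix the indexing of $[0,a_{-1},a_{-2},\dots]$ carefully and verify that an even-length separating run of $1$'s really does land the leading $2$ of the subsequent pair on an even index, so that Lemma \ref{lem:We are dumb} pushes $-\widetilde\Delta$ \emph{up} rather than down. Everything else is a direct appeal to the monotonicity in Lemma \ref{lem:Normalized Volume of a Box} together with the two-line evaluation $\tfrac{\Delta(2-\widetilde\Delta)}{\Delta-\widetilde\Delta}=\tfrac{4+\sqrt 5}{3}$.
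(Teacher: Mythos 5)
Your proof is correct and follows essentially the same route as the paper's: place the rightmost pair of $2$'s at $a_{-1}=a_0$ so that $\Delta=[2,\overline{1}]$ exactly, establish $-\widetilde\Delta\ge[0,2,\overline{1}]$, and conclude from the monotonicity of the box-volume formula in Lemma \ref{lem:Normalized Volume of a Box}. The only difference is that you spell out the parity bookkeeping behind $-\widetilde\Delta\ge[0,2,\overline{1}]$ (the next pair's leading $2$ lands at an even index) and the exact evaluation at $\varphi^{\pm2}$, both of which the paper leaves implicit.
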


\begin{proof}
Without loss of generality assume $a_{-1}=a_{0}=2,a_{k}=1$ for all $k\in\N$, then
\begin{align*}
\Delta=[2,\overline{1}] \quad \text{and} \quad -\widetilde\Delta \ge [0,2,\overline{1}], 
\end{align*}
and we see that in this case the dispersion is bigger than the limit in Theorem \ref{thm:BestLattices}.
\end{proof}

\begin{lemma}
Assume that $\mathfrak{A}=(a_i)_{i\in\Z}$ is a $(1,2)$-sequence, has no isolated $1$'s, the $2$'s only occur in pairs separated by $1$'s, and the lengths of the stretches of $1$'s that separate the $2$'s are all even, and the sequence is not eventually all $1$'s. If the lengths of the stretches of $1$'s that separate the $2$'s are not all the same, then  
\[
    \disp(\mathfrak{A}) \ge \frac{4+5^{1/2}}{3}.
\]
\end{lemma}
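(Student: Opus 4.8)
The plan is to exhibit a single box whose normalized volume is at least $(4+5^{1/2})/3$; since the dispersion of the sequence is a supremum of such volumes, this suffices. The guiding principle is the parity dichotomy already implicit in Lemma \ref{lem:oneallodd} and in the best lattices: a stretch of $1$'s of even length makes the associated value $[2,1,\dots,1,\dots]$ fall just \emph{below} $\varphi^2=[2,\overline1]$, whereas an odd stretch overshoots it. Consequently a palindromic (symmetric) even configuration has normalized dispersion below the limit $(4+5^{1/2})/3$ — these are exactly the $n$-th best lattices of Theorem \ref{thm:BestLattices} — so, unlike in Lemma \ref{lem:oneallodd}, symmetrizing the sequence would push us the wrong way. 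Instead I would exploit the hypothesis that the stretch lengths are unequal.

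First I would locate the relevant box. Because the stretches are not all of equal length, some $2$-pair is bordered by stretches of two different lengths, and hence (reversing the sequence if necessary, which by Corollary \ref{cor:sequence correspondence} preserves both the normalized dispersion and all the structural hypotheses) by a strictly shorter stretch on the left and a strictly longer one on the right. Re-indexing so that $a_0=2$ is the \emph{second} element of this pair, I obtain
\[
\Delta=[2,\underbrace{1,\dots,1}_{s'},2,2,\dots]
\quad\text{and}\quad
-\widetilde\Delta=[0,2,\underbrace{1,\dots,1}_{s},2,2,\dots],
\]
with $s<s'$ both even. The box of interest is $B_1$, whose normalized volume by Lemma \ref{lem:Normalized Volume of a Box} equals $\Delta(2-\widetilde\Delta)/(\Delta-\widetilde\Delta)$ and is increasing in both $\Delta$ and $-\widetilde\Delta$.

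Next I would minimize this expression under the constraint $s<s'$. The long right stretch forces $\Delta$ close to $\varphi^2$ from below, while the short left stretch keeps $-\widetilde\Delta$ strictly above $\varphi^{-2}$; it is precisely this asymmetry that lifts the volume above the symmetric value $(4+5^{1/2})/3$. Using Lemma \ref{lem:We are dumb} to pin down the extremal periodic tails compatible with even stretches and $s<s'$ (the tightest case being $s=2$, $s'=4$), I would replace $\Delta$ and $\widetilde\Delta$ by explicit quadratic surds $\Delta'$ and $\widetilde\Delta'$ and check that
\[
\frac{\vol(B_1)}{\det(\Lambda)}\ge\frac{\Delta'(2-\widetilde\Delta')}{\Delta'-\widetilde\Delta'}\ge\frac{4+5^{1/2}}{3}.
\]

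The hard part will be this final inequality: the margin here is on the order of $10^{-4}$, so the verification is genuinely delicate and, as elsewhere in this section, is most safely confirmed with the help of a computer. A secondary subtlety is that the constraint $s<s'$ couples the two generators, so one must argue carefully that the minimizing configuration really is the one with the shortest admissible left stretch, and that the monotonicity in Lemma \ref{lem:Normalized Volume of a Box} — together with Lemma \ref{lem:FactorChangingDispersion} when weighing the competing effects of lengthening the two stretches — is being applied in the correct direction.
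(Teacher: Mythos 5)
Your setup coincides with the paper's: re-index so the $2$-pair sits at the origin with the longer stretch of $1$'s on the $\Delta$ side and the shorter on the $\widetilde\Delta$ side, then bound $\vol(B_1)/\det(\Lambda)$ from below via the monotonicity in Lemma \ref{lem:Normalized Volume of a Box}. The gap is in your identification of the extremal configuration: $s=2$, $s'=4$ is not the tightest case but the \emph{loosest}. Writing $s'=2k$ and taking the short stretch as large as admissible ($s=2k-2$), the paper obtains an explicit lower bound for $\vol(B_1)/\det(\Lambda)$ as a function of $k$ and shows, via the derivative computation in powers of $\varphi$, that this function is \emph{decreasing} in $k$ and converges to $(4+5^{1/2})/3$ from above. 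The infimum over admissible configurations is therefore the limit itself, approached only as the stretches lengthen and never attained; checking $s=2$, $s'=4$ (where the margin is indeed about $2.6\times10^{-4}$) covers only that single configuration, and no finite list of checks can close the argument because the values creep down to the target. The monotonicity-in-$k$ computation is the real content of the proof, not a one-off numerical verification.

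The coupling you dismiss as a secondary subtlety is in fact fatal to any decoupled minimization. Minimizing $\Delta$ and $-\widetilde\Delta$ independently over all admissible stretch lengths gives $\Delta\ge[2,1,1,1,1,2,\overline1]\approx 2.61243$ (attained in the bound at $s'=4$) but only $-\widetilde\Delta>\varphi^{-2}\approx 0.38197$ (approached as $s\to\infty$); substituting these into $\Delta'(2-\widetilde\Delta')/(\Delta'-\widetilde\Delta')$ yields approximately $2.07812$, which lies \emph{below} $(4+5^{1/2})/3\approx 2.07869$. So a single uniform pair $(\Delta',\widetilde\Delta')$ lower-bounding all admissible generators cannot certify the inequality. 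The hypothesis $s<s'$ must be carried through the whole minimization: whenever the short stretch is long enough to push $-\widetilde\Delta$ down toward $\varphi^{-2}$, the long stretch is longer still and pushes $\Delta$ up toward $\varphi^2$, and it is exactly this trade-off, quantified by the paper's monotonicity computation, that keeps the volume above the limit.
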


\begin{proof}
By transforming the sequence, if necessary, we may assume that $\Delta$ and $\widetilde \Delta$ are of the form 
\begin{align*}
\Delta=[2,\underbrace{1,\dots,1}_{2k},2,2,\Delta_{2k+3}] \quad \text{and} \quad -\widetilde\Delta=[0,2,\underbrace{1,\dots,1}_{2l},2,2,-\widetilde\Delta_{-2l-4}],
\end{align*}
with $l<k$.
For fixed $k$ we get that $\Delta$ and $-\widetilde\Delta$ are bounded from below by
\[
\Delta'=[2,\underbrace{{1,\dots,1}}_{2k},2,\overline{1}] \quad \text{and} \quad -\widetilde\Delta'=[0,2,\underbrace{{1,\dots,1}}_{2k-2},2,2,\overline{1}].
\]
The remainder of the proof is similar to the proof of Lemma \ref{lem:oneallodd} in that we simultaneously add two additional $1$'s to the initial stretch of $1$'s in $\Delta'$ and $-\widetilde\Delta'$, making $\Vol(B'_1)$ smaller. Repeating this process leads to the limit in Theorem \ref{thm:BestLattices}. 

Applying Lemma \ref{lem: Delta i} we can rewrite $\Delta'$ and $-\widetilde\Delta'$ as
\[
\Delta'=[2,\underbrace{{1,\dots,1}}_{2k},2,\varphi]=\frac{F_{2k+5}\varphi+F_{2k+3}}{F_{2k+3}\varphi+F_{2k+1}}=:\frac{A(k)}{A(k-1)} \quad\text{and }
\]
\[
-\widetilde\Delta'=[0,2,\underbrace{{1,\dots,1}}_{2k-2},2,\varphi^2]=\frac{F_{2k+1}\varphi^2+F_{2k-1}}{F_{2k+3}\varphi^2+F_{2k+1}}=:\frac{B(k-1)}{B(k)}.
\]
Make the subsitution $F_k=(\varphi^k-\overline\varphi^{k})/\sqrt{5}$
and evaluate the formula in Lemma \ref{lem:Normalized Volume of a Box} for $\vol(B_1)$ to obtain
\begin{align*}
\frac{(\Delta')(2-\widetilde\Delta')}{\Delta'-\widetilde\Delta'} &= \frac{2A(k)B(k)+A(k)B(k-1)}{A(k)B(k)+A(k-1)B(k-1)} \\
        &= \frac{\varphi^{4k}(\varphi^{13}+\varphi^{9}+2\varphi^{7}+2\varphi^{4}+\varphi^{2})+4\varphi^{7}+3\varphi^{4}+2\varphi^{-1}+\varphi^{-3}+\varphi^{-4k}(1+5\varphi^{-3})}{\varphi^{4k}(\varphi^{12}+1)+2\varphi^{7}+4\varphi+\varphi^{-4k}(2\varphi+2\varphi^{-3})}\\
        &= \frac{\varphi^{4k}(300\varphi+186)+ (65\varphi+33) +\varphi^{-4k}(10\varphi - 14)}{\varphi^{4k}(144\varphi+90)+(30\varphi+16)+\varphi^{-4k}(6\varphi - 6)}.
\end{align*}
The numerator of the derivative of the last term with respect to $k$ is
\begin{align*}
&(4k\varphi^{4k-1}(300\varphi+186)-4k\varphi^{-4k-1}(10\varphi - 14))(\varphi^{4k}(144\varphi+90)+(30\varphi+16)+\varphi^{-4k}(6\varphi - 6))\\
-&(\varphi^{4k}(300\varphi+186)+ (65\varphi+33) +\varphi^{-4k}(10\varphi - 14))(4k\varphi^{4k-1}(144\varphi+90)-4k\varphi^{-4k-1}(6\varphi - 6)).
\end{align*}

By showing that this is negative we will see that increasing $k$ decreases $\vol(B_1)$.
Dividing the last term by $4k$, $\varphi^{-4k-1}$  and $\varphi^{-4k}$ does not change the sign, so we can consider the simplified term
\begin{align*}
&(\varphi^{8k}(300\varphi+186)-(10\varphi - 14))(\varphi^{8k}(144\varphi+90)+\varphi^{4k}(30\varphi+16)+(6\varphi - 6))\\
-&(\varphi^{8k}(300\varphi+186)+ \varphi^{4k}(65\varphi+33) +(10\varphi - 14))(\varphi^{8k}(144\varphi+90)-(6\varphi - 6))\\
=&\varphi^{4k}(158\varphi+116)+\varphi^{8k}(1584\varphi+1008)-\varphi^{12k}(582\varphi+354),
\end{align*}
which is negative for $k>1$. 
\end{proof}

We have now eliminated the possibility that any two-sided sequence other than those proposed in Theorem \ref{thm:BestLattices} can have dispersion smaller than the limit. We finish the proof of the theorem with the following lemma.

\begin{lemma}
The sequence $(\disp(\overline{2,\underbrace{1,\dots,1}_{2n},2}))_n$ is increasing with limit
\[
\frac{4+5^{1/2}}{3}=2.07868\dots
\]
\end{lemma}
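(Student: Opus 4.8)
The plan is to collapse the dispersion of each sequence to a single quadratic quantity and then handle the resulting numerical sequence with the monotonicity already built up in the section. Fix $n$ and re-index the sequence $\mathfrak A=(\overline{2,\underbrace{1,\dots,1}_{2n},2})$ by Corollary \ref{cor:sequence correspondence} so that the pair of $2$'s sits at positions $-1$ and $0$; thus $a_{-1}=a_0=2$, $a_1=\dots=a_{2n}=1$, $a_{2n+1}=2$, and $\Delta=\Delta_0=[\overline{2,\underbrace{1,\dots,1}_{2n},2}]$. Because the period $(2,1,\dots,1,2)$ is a palindrome, the backward sequence $(a_{-1},a_{-2},\dots)$ has the same period, so directly from $-\widetilde\Delta=[0,a_{-1},a_{-2},\dots]$ we get $-\widetilde\Delta=[0,\overline{2,\underbrace{1,\dots,1}_{2n},2}]=\Delta^{-1}$, i.e. $\widetilde\Delta=-\Delta^{-1}$.

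First I would show $\disp(\mathfrak A)=\frac{\Delta(2-\widetilde\Delta)}{\Delta-\widetilde\Delta}=\frac{\Delta(2+\Delta^{-1})}{\Delta+\Delta^{-1}}$. By Lemma \ref{lem:Normalized Volume of a Box} the normalized area of the $(A_i+j)$-th box is $\frac{(1-j+\Delta_i)(1+j-\widetilde\Delta_i)}{\Delta_i-\widetilde\Delta_i}$. For $j=0$ this is $\frac{(1+\Delta_i)(1-\widetilde\Delta_i)}{\Delta_i-\widetilde\Delta_i}$, and the inequality with $2$ rearranges to $(1-\Delta_i)(1+\widetilde\Delta_i)<0$, which holds since $\Delta_i>1$ and $\widetilde\Delta_i>-1$; hence every $j=0$ box -- in particular the only box at each position where $a_i=1$ -- has normalized area below $2$. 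At a position with $a_i=2$ the $j=1$ box has normalized area $\frac{\Delta_i(2-\widetilde\Delta_i)}{\Delta_i-\widetilde\Delta_i}$, which rearranges to exceed $2$ precisely because $-\widetilde\Delta_i>0$ and $\Delta_i>2$. So $\disp(\mathfrak A)$ is attained at one of the two $j=1$ boxes at the pair of $2$'s. Using the recurrences $\Delta_{-1}=2+\Delta^{-1}$ and $-\widetilde\Delta_{-1}=\Delta-2$ (both immediate from the continued fraction algorithm together with $\widetilde\Delta=-\Delta^{-1}$), a one-line check shows these two boxes share the denominator $\Delta+\Delta^{-1}$ and the numerator $2\Delta+1$, so both equal $\frac{\Delta(2+\Delta^{-1})}{\Delta+\Delta^{-1}}$, as claimed.

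Next I would settle monotonicity and the limit, which is now routine. As $n$ grows the generators $[\overline{2,1^{2n},2}]$ agree with $[2,\overline1]=\varphi^2$ in ever longer initial blocks; the first partial quotient at which $[\overline{2,1^{2n},2}]$ and $[\overline{2,1^{2n+2},2}]$ differ has the \emph{odd} index $2n+1$, where the former has a $2$ and the latter a $1$, so by Lemma \ref{lem:We are dumb} the numbers $\Delta$ are strictly increasing in $n$, each lying in $(2,\varphi^2)\subset(2,3)$, with limit $\varphi^2$. Since $\widetilde\Delta=-\Delta^{-1}$ throughout, Lemma \ref{lem:FactorChangingDispersion} says that $\frac{\Delta(2+\Delta^{-1})}{\Delta+\Delta^{-1}}$ is increasing in $\Delta$ on $(2,3)$; composing with the increasing sequence of generators shows $(\disp(\overline{2,1^{2n},2}))_n$ is increasing. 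By continuity its limit is the value at $\Delta=\varphi^2$, and using $\varphi^2+\varphi^{-2}=3$ one computes
\[
\frac{\varphi^2(2+\varphi^{-2})}{\varphi^2+\varphi^{-2}}=\frac{4+\sqrt5}{3},
\]
the normalized dispersion of the limiting lattice generated by $[2,\overline1]=\varphi^2$ recorded in Theorem \ref{thm:BestLattices}.

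The hard part is entirely in the second paragraph: identifying the dispersion with the $j=1$ box at the pair of $2$'s and proving the two such boxes coincide. The coincidence is a clean consequence of $\Delta_{-1}=2+\Delta^{-1}$ and $-\widetilde\Delta_{-1}=\Delta-2$, but one must also dispose of the $j=0$ boxes and the boxes at the isolated runs of $1$'s, which is exactly what the two sign computations $(1-\Delta_i)(1+\widetilde\Delta_i)<0$ and $-\widetilde\Delta_i(\Delta_i-2)>0$ accomplish. Once the dispersion is pinned to $\frac{\Delta(2+\Delta^{-1})}{\Delta+\Delta^{-1}}$, the monotonicity and limit follow from Lemmas \ref{lem:We are dumb} and \ref{lem:FactorChangingDispersion} with no further work.
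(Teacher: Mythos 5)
Your proof is correct and follows the same route the paper intends: the paper's own proof is the single line ``this follows directly from Lemma \ref{lem:FactorChangingDispersion},'' and your argument supplies exactly the omitted details — pinning the dispersion to the $j=1$ box value $\frac{2\Delta+1}{\Delta+\Delta^{-1}}$ with $\widetilde\Delta=-\Delta^{-1}$, then invoking the monotonicity of the generators and Lemma \ref{lem:FactorChangingDispersion}. Nothing is missing; your write-up is simply a fully expanded version of the paper's one-liner.
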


\begin{proof}
This follows directly from Lemma \ref{lem:FactorChangingDispersion}.
\end{proof}

%%%%%%%%%%%%%%%%%%%%%%%%%%%%%%%%%%%%%%%%%%%%%%%%%%%%%%%%%%%%%%%%%%%%%%%%%%%%%%%%%%
\section{Periodic Disperson}\label{sec:Periodic Disperson}

A \emph{rational lattice} in the unit square is a point set of the form $P_n=\{(\{k\alpha\},k/n),0\leq k<n\}\subseteq[0,1]^2$, where $0<\alpha<1$ and $\{k\alpha\}$ denotes the decimal part of $k\alpha$. In the special case $\alpha=p/n$ with $p$ and $n$ relatively prime, we call $P_n$ a \emph{rank-1 lattice}. Such a point set can be realized as a subset of the lattice $\Lambda\subset\R^2$ with matrix representation
\[
\begin{pmatrix}
\alpha      &   -1\\
n^{-1}  &   0
\end{pmatrix}.
\]
These point sets where studied in \cite{BreneisHinrichs} within the context of periodic dispersion. In \cite{Ullrich} it was shown that $d$ is a lower bound for the normalized periodic dispersion in the $d$-dimensional case. There authors showed that Fibonacci lattices defined by
\begin{align}\label{eq:Fibonacci Lattice}
\mathcal{F}_m=\Bigg\{\Bigg(\bigg\{\frac{k F_{m-2}}{F_m}\bigg\},\frac{k}{F_m}\Bigg),0\leq k<F_m\Bigg\},
\end{align}
have optimal normalized periodic dispersion of $2$ for all $m\in\N$ and moreover, they are the only rank-1 lattices that achieve this. In this section we will modify our framework to give an alternative proof of this result. 
We will also be able to draw a connection to Zaremba's long-standing conjecture. This conjecture is already known to be connected to $L^2$-discrpancy, which is another measure for the uniformity of points, see for example \cite{Larcher}.

\begin{conjecture}[Zaremba]\label{con:Zaremba}
There exists an absolute constant $A$ such that for all $N\in\N$ there is an $a\in\N$ relatively prime to $N$ such that all continued fraction coefficients of $\frac{a}{N}$ are less than $A$. (The choice $A=5$ is compatible with the numerical evidence.)
\end{conjecture}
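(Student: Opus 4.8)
The statement to be proved is Zaremba's conjecture itself, which is a long-standing open problem; accordingly, what follows is an attack strategy organized around the framework of this paper, not a complete argument, and I will flag honestly where it necessarily stalls. The natural entry point is Theorem \ref{thm:Zaremba eq disp Bound}, which recasts the conjecture as the existence of a constant $C$ admitting, for every $n$, an integration lattice of periodic dispersion below $C/n$. Writing the rank-$1$ parameter as $a/N=[0,b_1,\dots,b_m]$, the periodic-dispersion analogue of Theorem \ref{thm:Genreal upper/lower bounds} controls the normalized periodic dispersion of $P_N$, up to an additive constant, by $\max_i b_i$. So the plan is to reduce Zaremba's conjecture to the following purely geometric assertion: for each modulus $N$ one can choose a numerator $a$ coprime to $N$ whose associated lattice sequence $(b_i)$ is bounded by a universal $A$ (with $A=5$ expected). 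First I would make this reduction precise, and then try to produce such a numerator for every $N$.

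The governing object is the thin semigroup generated by the continuant matrices $\left(\begin{smallmatrix} b & 1 \\ 1 & 0 \end{smallmatrix}\right)$ with $1\le b\le A$. By the product formula underlying Lemma \ref{lem:Can start anywhere}, a fraction $a/N$ with all partial quotients in $\{1,\dots,A\}$ corresponds exactly to a product of such matrices whose lower-left entry equals $N$. Thus the conjecture becomes the surjectivity onto $\N$ of the \emph{denominator map} on this semigroup. The engine I would use is the Cantor set $\mathfrak{C}_A\subset[0,1]$ of reals whose partial quotients are all at most $A$: its Hausdorff dimension $\delta_A$ already exceeds $1/2$ for $A=2$ and tends to $1$ as $A\to\infty$. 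When $\delta_A>1/2$ the set of attainable denominators is large enough that a sum-product/expander mechanism forces it to have positive, and then full, density; this is the Bourgain--Kontorovich circle-method argument, which already yields the conjecture for a density-$1$ set of $N$ with a power-saving error term.

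The hard part, and the reason the conjecture remains open, is upgrading ``density one'' to ``every single $N$.'' A density argument is blind to a sparse (density-$0$) set of potential exceptions, and ruling these out would require a genuine local-to-global principle for the thin group $\Gamma_A=\langle \left(\begin{smallmatrix} b & 1 \\ 1 & 0 \end{smallmatrix}\right):1\le b\le A\rangle$: one would have to show that the only obstructions to representing $N$ as a bounded continuant are congruence obstructions modulo small primes, and then that no such congruence obstruction actually excludes any $N$. Local-global statements of this flavour are available for certain thin orbits, but not in the uniform strength demanded here, and supplying one is exactly the missing ingredient. I expect this step to be the decisive obstacle, and I would not claim to overcome it.

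Finally, I would try to exploit a feature special to the dispersion setting that the arithmetic formulations do not emphasize, namely the Lagrange/Markov connection flagged in Remark \ref{rem:Lagrange Connection}. Sequences with bounded partial quotients are precisely the badly approximable numbers whose approximation constants populate the Lagrange spectrum, and the continuants realizing them are the denominators one needs. If the dispersion-minimizing families of Section \ref{sec:small dispersion} could be shown to sweep out all residue classes as their period grows, surjectivity of the denominator map would follow directly. I regard making this precise as the most promising paper-specific angle; nevertheless it confronts the same density-$0$ obstacle as the analytic route, and I do not anticipate closing the gap without genuinely new arithmetic input.
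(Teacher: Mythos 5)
You were asked to prove the statement labelled Conjecture \ref{con:Zaremba}, and the first thing to say plainly is that the paper contains no proof of it: it is stated as a conjecture precisely because it is a long-standing open problem, and the paper's only contribution concerning it is Theorem \ref{thm:Zaremba eq disp Bound}, an \emph{equivalence} between the conjecture and a uniform periodic-dispersion bound for rank-$1$ lattices. You are commendably honest that your text is an attack plan rather than a proof, and your reformulation step is correct as far as it goes; but note that it buys nothing, because both directions of Theorem \ref{thm:Zaremba eq disp Bound} are proved in the paper by translating $\max_i a_i$ into dispersion via Theorem \ref{thm:Genreal upper/lower bounds}, so the dispersion statement is a pure restatement of the arithmetic one, not an independent foothold. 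The analytic engine you then invoke (the Bourgain--Kontorovich circle-method argument for the semigroup of continuant matrices with entries bounded by $A$) is correctly described, and it is also correctly described as insufficient: it yields a density-one set of admissible denominators $N$ with a power-saving error, and the passage from density one to \emph{every} $N$ -- a uniform local-to-global principle for this thin semigroup -- is exactly the open content of the conjecture. Naming that gap does not fill it.

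Your proposed paper-specific angle would also fail concretely. The dispersion-minimizing lattices of Section \ref{sec:small dispersion} are \emph{irrational} lattices: their generators are quadratic irrationals such as $[\overline{2,1,\dots,1,2}]$, so they never produce fractions $a/N$, and truncating them to convergents produces denominators (continuants) growing geometrically, hence a density-zero set of integers -- far too sparse to cover $\N$. Moreover, even if the periods of these families equidistributed over all residue classes, that would not suffice: Zaremba demands, for each individual $N$, a numerator $a$ coprime to $N$ with $a/N$ having bounded partial quotients, i.e.\ surjectivity of the denominator map onto $\N$ exactly, not merely onto every congruence class. The Lagrange/Markov connection of Remark \ref{rem:Lagrange Connection} concerns the spectrum of approximation constants of the infinite sequences, and carries no information about which integers occur as continuants. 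So the verdict is: your submission is a reasonable survey of why the problem is open and is consistent with how the paper actually uses the conjecture (as a hypothesis in the equivalence of Theorem \ref{thm:Zaremba eq disp Bound}), but it is not, and along these lines cannot be completed into, a proof of the statement.
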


% A228804: The numbers which are not a denominator of a rational number having continued fraction consisting entirely of 1s and 2s, the complement of A228803. 6, 9, 14, 20, 22, 23, 28, 32,\dots 
% This sequence is in some sense bad. (See \cite{oeis})

% \begin{conjecture}[Moser]\label{con:Moser}
% There is an absolute constant $B$ such that for all $N\in\N$ there is an $a\in\N$ relatively prime to $N$ such that the sum of all continued fraction coefficients of $\frac{a}{N}$ is less than $B\log(N)$.
% \end{conjecture}

When we normalize the matrix representing a rational lattice in order to fit it into the framework of Section \ref{sec:contfracconnect} we obtain
\[
\begin{pmatrix}
1   &-\Delta\\
1   &-\widetilde\Delta
\end{pmatrix}
=
\begin{pmatrix}
\alpha^{-1}   &   0\\
0   &   n
\end{pmatrix}
\begin{pmatrix}
\alpha      &   -1\\
n^{-1}  &   0
\end{pmatrix}
=
\begin{pmatrix}
1   &   -\alpha^{-1}\\
1   &   0
\end{pmatrix}.
\]
Unfortunately, the lattice generated by this matrix certainly has multiple points on horizontal lines and if $\alpha$ is rational we have multiple points on vertical lines as well. Therefore, without modifying our framework, we are unable to handle these lattices.

 To simplify the matter we a going to assume that the matrix that generates the lattice already has the form that we want. Thus, we do not have to worry about modifying Lemma \ref{lem:EquivalentLattice}. Lemma \ref{lem:StartingBoxIsEmpty} already fits in the modified framework and gives us a starting box for Proposition \ref{prop:BoxesAreSemiConvergents}.
Once we modify Proposition \ref{prop:BoxesAreSemiConvergents} in such a way as to allow the sequences associated to lattices to terminate in one or both directions, the remainder of section \ref{sec:contfracconnect} will follow without any significant differences, in particular Theorem \ref{thm:Genreal upper/lower bounds} holds.

Note that for a finite continued fraction we have $[a_0,a_1,\dots,a_k+1]=[a_0,a_1,\dots,a_k,1]$.
By excluding the sequence consisting of a single $1$, i.e. excluding the trivial case $\Lambda = \Z^2$, we may always assume that the sequence terminates in $1$ and, by applying Lemma \ref{lem:Can start anywhere},  that $\widetilde\Delta<0$.

\begin{prop}[Modification of Proposition \ref{prop:BoxesAreSemiConvergents}]
Let $\Lambda$ be the lattice generated by $(1,1)$ and $(-\Delta,-\widetilde\Delta)$, where $1\leq\Delta$ and $-1\le\widetilde\Delta<0$ are possibly rational. For all $n\in[A_{l-1},A_{k+1}]$ such that $B_n\in \mathcal{B}$ write $n=A_i+j$, where $0\leq j<a_i$, then the left and right sides of the $n$-th box in $\mathcal B$ are given by
\begin{align*}
    \alpha_n&=\begin{cases}
     p_{i}-q_{i}\Delta  &\text{if } i \text{ is odd}, \\
    j(p_{i}-q_{i}\Delta) + (p_{i-1}-q_{i-1}\Delta)  &\text{if } i \text{ is even},
    \end{cases}\\
    \beta_n&=\begin{cases}
    j(p_{i}-q_{i}\Delta) + (p_{i-1}-q_{i-1}\Delta)  &\text{if } i \text{ is odd},\\
    p_{i}-q_{i}\Delta  &\text{if } i \text{ is even}.
    \end{cases}
\end{align*}
% In particular, if the sequence ends in the positive direction, then the point bounding box $B_{A_{k+1}}$(the tallest maximal empty box bounded by the origin from below) will have $0$ in its first coordinate and if the sequence ends in the negative direction, then both points bounding the box $B_{A_{l-1}}$(the shortest maximal empty box bounded by the origin from below) will have $[0, a_{-1},\dots,a_l]$ in their second coordinate, which is the minimal possible positive second coordinate of this lattice.
\end{prop}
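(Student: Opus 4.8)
The plan is to reproduce the inductive argument of Proposition~\ref{prop:BoxesAreSemiConvergents} essentially verbatim, checking only that each ingredient survives the passage from infinite to finite continued fractions. The base case is unchanged: Lemma~\ref{lem:StartingBoxIsEmpty} was already proved for $\Delta\ge 1$ and $-1\le\widetilde\Delta<0$ allowing rational values, so $B_0=(-\Delta,1)\times(0,1-\widetilde\Delta)$ is still a maximal empty box bounded below by the origin, and the claimed formulas hold at $n=0$. From here one would induct upward via \eqref{eq:Going Up}--\eqref{eq:FormulaForBetaNPlus1} and downward via \eqref{eq:Going Down}, exactly as in the irrational case. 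The sign computations that drive the induction---showing $\alpha_n'+\beta_n'>0$ for $i$ odd and $\alpha_n'+\beta_n'<0$ for $i$ even, together with the analogous statements going down---rely only on \eqref{eq:Convergents Fact 3} and the recurrences defining $p_i$ and $q_i$, and these remain valid at every index strictly interior to the (now finite) expansion.

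The one genuinely new feature is the behaviour at the terminal indices, and this is exactly what the restriction $n\in[A_{l-1},A_{k+1}]$ is there to accommodate. After reducing to an expansion that terminates in a $1$ with $\widetilde\Delta<0$, the last convergent of $\Delta=[a_0,\dots,a_k]$ satisfies $p_k/q_k=\Delta$, so $p_k-q_k\Delta=0$ and the point $(p_k-q_k\Delta,\,p_k-q_k\widetilde\Delta)$ lies on the $y$-axis. Geometrically this is the wrap-around point bounding the widest box $B_{A_{k+1}}$, and symmetrically $p_l-q_l\widetilde\Delta=0$ produces the axis point bounding $B_{A_{l-1}}$. The induction should therefore be run only over this range: one would verify that at the final upward step the strict inequality in the interior sign computation degenerates to the equality $p_k-q_k\Delta=0$, in the sense that $B_{A_{k+1}}$ is precisely the box whose defining point has vanishing first coordinate, so the formulas continue to describe it (now with one side meeting the axis) and no nonexistent coefficient $a_{k+1}$ is ever invoked.

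I expect the main obstacle to be precisely this degenerate endpoint, namely the configuration---excluded in the irrational case---in which a point bounding a box lies exactly on the $y$-axis. One must confirm that the progression rules \eqref{eq:Going Up}--\eqref{eq:Going Down} and the inductive sign arguments remain correct up to and including the two axis-touching boxes, and that $[A_{l-1},A_{k+1}]$ is exactly the index set for which every quantity $p_i-q_i\Delta$ appearing in the formulas retains the sign demanded by \eqref{eq:Convergents Fact 3}. Away from these two endpoints the argument is word-for-word that of Proposition~\ref{prop:BoxesAreSemiConvergents}, so the bulk of the proof can be cited rather than rewritten.
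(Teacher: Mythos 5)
Your proposal matches the paper's proof essentially verbatim: both take the base case from Lemma \ref{lem:StartingBoxIsEmpty} (already proved for the weaker hypotheses), rerun the induction of Proposition \ref{prop:BoxesAreSemiConvergents}, and observe that the only new phenomenon is the termination of the continued fraction algorithm at the two ends, where $\alpha_n+\beta_n=0$ (a bounding point lands on the $y$-axis), which is exactly what the restriction to $[A_{l-1},A_{k+1}]$ accommodates. The argument is correct; only minor index bookkeeping at the endpoints would need to be tidied up in a full write-up.
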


\begin{proof}
By the previous lemma $B_0$ is a maximal empty box and trivially satisfies the proposition.
Following the proof of Proposition \ref{prop:BoxesAreSemiConvergents}, if $B_{n}$ is a maximal empty box and satisfies the assertion $B_{n+1}$ will satisfy it as well as long as $\alpha_n + \beta_n \neq 0$. This is the case until the continued fraction algorithm terminates which occurs exactly at $n=A_{k+1}$. Again, not unlike the second part of the proof of Proposition \ref{prop:BoxesAreSemiConvergents}, for negative $n$, we can convert the up statement into a down statement. The proof for negative $n$ is then a facsimile to the positive case.
\end{proof}

With our machinery we are able to give an easy proof of the main theorems in \cite{BreneisHinrichs}.

\begin{thm}\label{thm:Fibonacci lattices are optimal}
Let $m\ge 3$ be an integer. The Fibonacci lattice $\mathcal{F}_m$ satisfies
\[
\disp(\mathcal{F}_m)=\frac{2}{F_m}.
\]
Furthermore, no other rank-1 lattice achieves this bound.
\end{thm}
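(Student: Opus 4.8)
The plan is to translate the Fibonacci lattice $\mathcal{F}_m$ into the sequence framework developed in Sections \ref{sec:Empty boxes} and \ref{sec:contfracconnect}, and then reduce both claims to a computation about continued fraction sequences. First I would recall that $\mathcal{F}_m$ is (up to the normalizing diagonal scaling and the $\GL_2(\Z)$ action that preserve normalized dispersion, by Lemma \ref{lem:NBAequivoperations}) the rational lattice with $\alpha = F_{m-2}/F_m$. Passing through the normalization computation displayed just before Conjecture \ref{con:Zaremba}, the relevant generator is $\alpha^{-1} = F_m/F_{m-2}$, whose continued fraction expansion is the all-ones sequence $[1,1,\dots,1]$ (of the appropriate finite length), reflecting the identity $F_m/F_{m-2} = [1,\underbrace{1,\dots,1}_{m-3},\dots]$. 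Since $F_m \cdot \disp(P) = \disp(\Lambda)/\det(\Lambda)$ asymptotically, and here the lattice is exactly rational so the normalized dispersion is exactly $F_m \disp(\mathcal{F}_m)$, the target value $2/F_m$ corresponds precisely to a normalized dispersion of $2$.

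The computation of $\disp(\mathcal{F}_m) = 2/F_m$ itself I would get directly from the modified Proposition and from Theorem \ref{thm:Genreal upper/lower bounds} applied to the sequence $\mathfrak{A} = (1,1,\dots,1)$. When every coefficient equals $1$, the interval $0 \leq j < a_i$ forces $j = 0$ at every index, so the only boxes to examine are the $B_{A_i}$, and Lemma \ref{lem:Normalized Volume of a Box} gives their normalized volume as $(1+\Delta_i)(1-\widetilde\Delta_i)/(\Delta_i - \widetilde\Delta_i)$ with $\Delta_i,-\widetilde\Delta_i$ ratios of consecutive Fibonacci numbers. A short induction, or a direct evaluation using $\Delta_i = F_{k}/F_{k-1}$ and the Fibonacci recurrence, shows each such normalized volume equals exactly $2$; multiplying by $\det$ and undoing the scaling returns $2/F_m$. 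I should also confirm that no box in the finite sequence exceeds this, which is immediate since with all $a_i = 1$ there are no interior $j$ to consider and every box has the same normalized value.

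For the uniqueness statement I would argue by contradiction through the sequence correspondence of Corollary \ref{cor:sequence correspondence}, suitably adapted to terminating sequences as described in Section \ref{sec:Periodic Disperson}. Suppose a rank-1 lattice achieves normalized periodic dispersion $2$ but is not a Fibonacci lattice; then its associated (finite) sequence $\mathfrak{A}$ is not the all-ones sequence, so it contains some coefficient $a_i \geq 2$. By Theorem \ref{thm:Genreal upper/lower bounds}, as soon as $a_i > 1$ the lower bound $L(a_i) = a_i/4 + 1 + 1/a_i - r/(4a_i)$ applies; for $a_i = 2$ this already gives a normalized box area strictly exceeding $2$ (one checks $L(2) = 1/2 + 1 + 1/2 = 2$ is the boundary, so I would need the strict inequality from Theorem \ref{thm:Genreal upper/lower bounds}, namely that the max strictly exceeds $L(a_i)$, together with the footnote observation that $\delta$ and $-\widetilde\Delta_i$ cannot both hit their extreme bounds $1$ and $0$ simultaneously once the lattice is not $\Z^2$). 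This forces the normalized dispersion above $2$, contradicting optimality.

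The main obstacle I anticipate is the boundary case $a_i = 2$, where the generic lower bound $L(a_i)$ equals exactly $2$ rather than strictly exceeding it; the argument must extract a strict inequality. This is precisely where the periodic/rational setting matters: I would lean on the footnoted modification in the proof of Theorem \ref{thm:Genreal upper/lower bounds}, which records that for a nontrivial rational lattice one cannot simultaneously attain $\delta = 1$ and $-\widetilde\Delta_i = 0$, so the bound is strict and a single coefficient $a_i = 2$ already pushes the normalized dispersion past $2$. A secondary bookkeeping point is matching the finite continued fraction of $F_m/F_{m-2}$ (and the convention $[\dots, a_k + 1] = [\dots, a_k, 1]$ from Section \ref{sec:Periodic Disperson}) to the all-ones sequence correctly, and verifying that this identification is exactly the class of Fibonacci lattices rather than a strictly larger family — this I would handle by observing that an all-ones terminating sequence of length $m-1$ corresponds under the matrix dictionary precisely to $\mathcal{F}_m$.
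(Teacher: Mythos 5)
Your overall route is the same as the paper's: identify $\mathcal F_m$ with the all-ones terminating sequence, compute the box volumes through the continued-fraction framework, and invoke Theorem \ref{thm:Genreal upper/lower bounds} for uniqueness. However, your central computational claim is false: the boxes do \emph{not} all have normalized volume $2$. With $\Delta_i=F_{a}/F_{a-1}$ and $-\widetilde\Delta_i=F_{b}/F_{b+1}$, Lemma \ref{lem:Normalized Volume of a Box} gives
\[
\frac{(1+\Delta_i)(1-\widetilde\Delta_i)}{\Delta_i-\widetilde\Delta_i}
=\frac{F_{a+1}F_{b+2}}{F_{a}F_{b+1}+F_{a-1}F_{b}}
=\frac{F_{a+1}F_{b+2}}{F_{a+b}},
\]
which for the Fibonacci lattice reads $\vol(B_k)/\det(\Lambda)=F_{m-k}F_{k+3}/F_m$. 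This equals $2$ only at the two ends $k=0$ and $k=m-3$ and is strictly smaller in between (e.g.\ $k=1$ gives $3F_{m-1}/F_m<2$ for $m\ge 5$). So the ``short induction showing each normalized volume equals $2$'' cannot succeed; what must actually be shown is that the maximum of $F_{m-k}F_{k+3}$ over $0\le k\le m-3$ equals $2F_m$ and is attained at the endpoints. That is precisely the computation the paper carries out, and it is the step missing from your argument.

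Your uniqueness step also proves too much as stated. The implication ``the finite sequence contains some $a_i\ge 2$, hence the normalized dispersion exceeds $2$'' fails without further normalization: the sequence $(2,1,\dots,1)$ --- which is the Fibonacci lattice itself in the presentation $\Delta=F_m/F_{m-2}=[2,1,\dots,1]$, $\widetilde\Delta=0$ --- contains a $2$ yet has dispersion exactly $2$, since at that index $-\widetilde\Delta_i=0$ and $(2+\delta)\cdot 2/(2+\delta)=2$. The footnote you lean on does not rescue this: the single-term sequence $(2)$ is not $\Z^2$, yet both $\delta$ and $-\widetilde\Delta_i$ vanish there (and it is $\mathcal F_3$). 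Note also that $L(3)=2$ as well, so $a_i=3$ is an equally delicate boundary case you do not address. The repair is to first normalize the terminating sequence so that it begins \emph{and} ends with $1$ (using $[\dots,a_k+1]=[\dots,a_k,1]$ together with transposition); then any coefficient $a_i\ge2$ is interior, so $\delta>0$ and $-\widetilde\Delta_i>0$, and for example for $a_i=2$, $j=1$ one computes
\[
\frac{(2+\delta)(2-\widetilde\Delta_i)}{2+\delta-\widetilde\Delta_i}-2=\frac{-\delta\widetilde\Delta_i}{2+\delta-\widetilde\Delta_i}>0
\]
strictly. With that normalization the all-ones class is exactly the class of Fibonacci lattices and the contradiction you want goes through.
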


\begin{proof}
The Fibonacci lattice corresponds to the planar lattice generated by
\begin{equation}
\begin{pmatrix}
1   &-\frac{F_{m-1}}{F_{m-2}}\\
1   &1
\end{pmatrix},
\end{equation}
where $\frac{F_{m-1}}{F_{m-2}}=[\underbrace{1,\dots,1}_{m-2}]$.
For this we can calculate all maximal empty boxes to be for $0\leq k\leq m-3$
\begin{align*}
\vol(B_k)&= \bigg(1+\frac{F_{m-k-1}}{F_{m-k-2}} \bigg) \bigg(1+\frac{F_{k+1}}{F_{k+2}} \bigg) \bigg(\frac{F_{m-k-1}}{F_{m-k-2}}+\frac{F_{k+1}}{F_{k+2}} \bigg)^{-1}\\
&= \frac{F_{m-k}F_{k+3}}{F_{m-k-1}F_{k+2}+F_{m-k-2}F_{k+1}}\\
&= \frac{F_{m-k}F_{k+3}}{F_{m}},
\end{align*}
having its maximum at $k=0$ and $k=m-3$.
By Theorem \ref{thm:Genreal upper/lower bounds} we directly see that any other rank-1 lattice has to have a higher dispersion as the continued fractions coefficients of the associated generators are larger than 1. (see Table \ref{tab:dispbyai} in the Appendix)
\end{proof}

\begin{remark}
The Fibonacci lattice can be realized by taking the points from lattice generated by the matrix 
\[
\frac{1}{F_m}\begin{pmatrix}
F_{m-2}     &   -F_m\\
1  &   0
\end{pmatrix}
\]
that are in the unit square.  The authors of \cite{BreneisHinrichs} also observed that by scaling and translating the points from that lattice contained in the region $[-1,1)\times[0,1)$ one can also achieve optimal periodic dispersion and asked if there there were other lattice point sets that are optimal. Our framework can be used to show that the only other lattice point set (up to transformation by the symmetries of the square) that achieves optimal periodic dispersion is the one obtained by scaling and translating the part of the lattice contained in $[-1,1)^2$. This point set was studied in the context of battleships in \cite{Fiat1989HowTF}. 
\end{remark}

The connection to Zaremba's Conjecture will require the following lemma.

\begin{lemma}\label{lem:FindTorusPointSet}
Let $p/n=[0,a_1,\dots,a_k,1]$ be a finite continued fraction then there is a rank-$1$ lattice of size $n$ having the same periodic dispersion equal to the size of the largest maximal empty box in the lattice generated by
\[
\begin{pmatrix}
1     &   -\frac{n}{p}\\
1  &   0
\end{pmatrix},
\]
normalized by the determinant divided by $n$.
\end{lemma}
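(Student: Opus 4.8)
The plan is to take $P_n$ to be the rank-$1$ lattice generated by $\alpha=p/n$, that is $P_n=\{(\{kp/n\},k/n):0\le k<n\}$; since $p/n=[0,a_1,\dots,a_k,1]$ is a convergent it is in lowest terms, so $\gcd(p,n)=1$ and $|P_n|=n$. I would realise $P_n$ as the fundamental-domain representatives of the planar lattice $\Lambda_\alpha$ generated by $\bigl(\begin{smallmatrix}\alpha&-1\\ 1/n&0\end{smallmatrix}\bigr)$: one checks $\Z^2\subseteq\Lambda_\alpha$ with $[\Lambda_\alpha:\Z^2]=n$, so the projection $\R^2\to\R^2/\Z^2$ carries $\Lambda_\alpha$ onto $P_n$. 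First I would normalise by left-multiplying the generating matrix by $\diag(n/p,n)$, which produces exactly the matrix $M=\bigl(\begin{smallmatrix}1&-n/p\\ 1&0\end{smallmatrix}\bigr)$ of the statement; by Lemma \ref{lem:NBAequivoperations} this preserves normalised dispersion, and it carries empty boxes to empty boxes through the linear map $\diag(n/p,n)$, rescaling the torus $[0,1)^2$ to the fundamental cell of $M$ and all areas by $n^2/p$.

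The core of the argument is to identify $\disp_{\mathbb T}(P_n)$ with the largest box in the family $\mathcal B=\{B_j\}$ attached to $M$ through the modified form of Proposition \ref{prop:BoxesAreSemiConvergents}. Because a lattice translate of an empty box is again empty with the same area, every maximal empty box on the torus lifts to a maximal empty box amidst $\Lambda_\alpha$ whose lower edge can be pinned to a lattice point placed at the origin, i.e.\ to a member of $\mathcal B$; and since the boxes in our framework straddle the $y$-axis ($\alpha_n<0<\beta_n$), the family already encodes the horizontal wrapping that periodic dispersion permits. Here I would use Lemma \ref{lem:StartingBoxIsEmpty} for the starting box and, crucially, re-index by Lemma \ref{lem:Can start anywhere} so that $\widetilde\Delta<0$. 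This last step rescues the computation from the degeneracy of $M$: with $\widetilde\Delta=0$ all points lie on the lines $y\in\Z$ and the starting box overshoots one horizontal period, whereas after re-indexing each $B_j$ has height exceeding $1$, so it must cross an occupied horizontal line and is thereby pinched to width at most one period. Consequently each $B_j$ embeds in the torus as a genuine once-wrapping empty box, and Lemma \ref{lem:Can start anywhere} guarantees that the det-normalised box volumes are unchanged, so they may be read off from $M$ itself.

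It then remains to run a finiteness/periodicity count: the torus is compact and $P_n$ is finite, so there are only finitely many maximal empty torus boxes, each arising from some $B_j$, and translation by the vertical period (a lattice vector of $\Lambda_M$) merely permutes them by a fixed index shift. Hence the supremum defining $\disp_{\mathbb T}(P_n)$ is attained and equals the area of the largest $B_j$. Undoing the normalisation then converts this maximum into the stated quantity, namely the largest maximal empty box of $M$ divided by $\det(M)$ and by $n$; as a check, for the Fibonacci lattice \eqref{eq:Fibonacci Lattice} the normalised largest box equals $2$ and the formula returns $2/F_m$, as required.

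The step I expect to be the main obstacle is the identification in the second paragraph, specifically proving that after re-indexing to $\widetilde\Delta<0$ every $B_j$ really does embed as an empty box on the torus (width and height at most one period) and, conversely, that no maximal empty torus box is missed. The degeneracy of $M$—its ambient planar dispersion is infinite, since the lattice occupies only the lines $y\in\Z$—means that all the content sits in the passage to the torus and the restriction to $\mathcal B$; the argument cannot appeal to unrestricted planar boxes and must instead lean on the modified Proposition \ref{prop:BoxesAreSemiConvergents}, on the crossing-an-occupied-line mechanism forced by $\widetilde\Delta<0$, and on the observation (made in the passage preceding that proposition) that excluding $\Lambda=\Z^2$ prevents the two degenerate bounds from being attained simultaneously. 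Once this correspondence is secured, Lemma \ref{lem:Normalized Volume of a Box} and Theorem \ref{thm:Genreal upper/lower bounds} bound the resulting dispersion in terms of the coefficients $a_i$, which is exactly the form needed for the link to Conjecture \ref{con:Zaremba}.
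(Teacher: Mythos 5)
Your proposal is correct and follows essentially the same route as the paper: realise $P_n$ as $\Lambda\cap[0,1)^2$ for the planar lattice $\Lambda$ generated by $\bigl(\begin{smallmatrix}p/n&-1\\1/n&0\end{smallmatrix}\bigr)$, note $\Lambda=P_n+\Z^2$ so that maximal empty planar boxes correspond to maximal periodic boxes on the torus, relate this lattice to $M$ by the diagonal NBA equivalence, and dispose of the degenerate horizontal strips (which you handle by re-indexing to $\widetilde\Delta<0$, where the paper simply observes the strips give non-maximal $1/n$-boxes extendable to $2/n$). The extra detail you supply on the box correspondence is a finer-grained version of the same argument, not a different one.
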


\begin{proof}
Consider the dispersion equivalent matrix
\[
\begin{pmatrix}
\frac{p}{n}     &   -1\vspace{0.2em}\\ 
\frac{1}{n}  &   0
\end{pmatrix}
=
\begin{pmatrix}
\frac{p}{n}     &   0\vspace{0.2em}\\
0 &   \frac{1}{n}
\end{pmatrix}
\begin{pmatrix}
1     &   -\frac{n}{p}\vspace{0.2em}\\
1  &   0
\end{pmatrix},
\]
call this lattice $\Lambda$ and draw the box $[0,1)^2$. So the points inside this square are equal to $P_n$.
Since $(0,0)$, $(1,0)$, $(0,1)$, and $(1,1)$ are in $\Lambda$ we see that
\[
\Lambda=P_n + \Z^2.
\]
Now, we see that every maximal box in $\Lambda$ directly corresponds to a periodic maximal box in $[0,1)^2$ amongst the points $P_n$. Finally, infinite strips in the lattice do correspond to boxes of size $1/n$ in $[0,1)^2$ which are essentially not maximal there, since they can directly be extended to maximal boxes of size $2/n$.
\end{proof}

\begin{thm}\label{thm:Zaremba eq disp Bound}
Zaremba's Conjecture is true if and only if there is a positive constant $C$ such that for all $n\in\N$ there exists some $p\in\N$, co-prime to $n$, such that the rank-$1$ lattice $P_n=\{(\{kp/n\},k/n),0\leq k<n\}\subseteq[0,1]^2$ has
\[
 \disp_\mathbb{T}(P_n) < \frac{C}{n}.
\]
\end{thm}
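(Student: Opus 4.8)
The plan is to prove both directions of the equivalence by connecting the continued fraction coefficients of $p/n$ to the periodic dispersion of the rank-1 lattice $P_n$, using the framework developed in Sections \ref{sec:contfracconnect} and \ref{sec:Periodic Disperson}. The crucial link is Lemma \ref{lem:FindTorusPointSet}, which says that $\disp_\mathbb{T}(P_n)$ equals the normalized dispersion (times $\det(\Lambda)/n$) of the largest maximal empty box in the lattice generated by $\bigl(\begin{smallmatrix}1 & -n/p\\ 1 & 0\end{smallmatrix}\bigr)$, together with Theorem \ref{thm:Genreal upper/lower bounds}, which bounds the normalized dispersion from both sides in terms of the largest continued fraction coefficient $a$ of the generator. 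Writing $p/n = [0,a_1,\dots,a_k,1]$, the normalized periodic dispersion is sandwiched between $L(a)$ and $L(a+2)$, where $a = \max_i a_i$ and $L(a) = \tfrac{a}{4}+1+\tfrac{1}{a}-\tfrac{r}{4a}$. Since $L$ is strictly increasing and unbounded, bounded normalized dispersion is equivalent to bounded coefficients.

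First I would establish the forward direction. Assume Zaremba's Conjecture holds with constant $A$. Then for every $n$ there is a $p$ coprime to $n$ with all continued fraction coefficients of $p/n$ less than $A$. By Lemma \ref{lem:FindTorusPointSet} and the modified Proposition (and hence Theorem \ref{thm:Genreal upper/lower bounds}, which the text asserts still holds in the rational setting), the normalized periodic dispersion $n\cdot\disp_\mathbb{T}(P_n)$ is bounded above by $L(a+2) < L(a) + \tfrac12$, where $a = \max_i a_i < A$. Since $L$ is increasing, $L(A+2)$ serves as a uniform upper bound, so setting $C = L(A+2)$ gives $\disp_\mathbb{T}(P_n) < C/n$ for all $n$, as desired. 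Here I must be careful to account for the normalization factor $\det(\Lambda)/n$ appearing in Lemma \ref{lem:FindTorusPointSet} and to confirm it contributes a harmless constant rather than growth in $n$.

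For the converse, I would argue by contraposition on the coefficients. Suppose there is a constant $C$ with $\disp_\mathbb{T}(P_n) < C/n$ for a suitable $p = p(n)$ coprime to $n$, for every $n$. Then $n\cdot\disp_\mathbb{T}(P_n) < C$ uniformly. By the lower bound in Theorem \ref{thm:Genreal upper/lower bounds} applied through Lemma \ref{lem:FindTorusPointSet}, we have $L(a) < n\cdot\disp_\mathbb{T}(P_n) < C$, where $a$ is the largest continued fraction coefficient of $p/n$. Because $L(a) \to \infty$ as $a \to \infty$, the inequality $L(a) < C$ forces $a$ to be bounded by some absolute $A = A(C)$, independent of $n$. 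Thus for every $n$ there is a $p$ coprime to $n$ all of whose partial quotients are at most $A$, which is precisely Zaremba's Conjecture.

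The main obstacle I anticipate is not the logical structure, which is a clean application of the two-sided bound, but rather verifying that Theorem \ref{thm:Genreal upper/lower bounds} and the machinery of Section \ref{sec:contfracconnect} transfer correctly to the \emph{finite} (rational, terminating) continued fraction setting and to the \emph{periodic} notion of dispersion. The excerpt claims the remainder of Section \ref{sec:contfracconnect} follows "without any significant differences," but I would need to confirm that the maximal empty box realizing the dispersion in Lemma \ref{lem:FindTorusPointSet} indeed corresponds to an interior coefficient subject to the $L(a)$ bounds, and that the terminal coefficient $1$ and the infinite strips (which, as the lemma notes, yield non-maximal boxes extendable to size $2/n$) do not interfere with the extremal box used in the estimate. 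A secondary subtlety is tracking the normalization $\det(\Lambda)/n$ so that the translation between $\disp_\mathbb{T}(P_n)$ and normalized dispersion is exact; once these bookkeeping points are settled, the equivalence follows immediately from the monotonicity and unboundedness of $L$.
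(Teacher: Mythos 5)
Your proposal is correct and follows essentially the same route as the paper: both directions combine Lemma \ref{lem:FindTorusPointSet} with the two-sided bound of Theorem \ref{thm:Genreal upper/lower bounds}, using the upper bound $L(a+2)$ to produce the constant $C$ from Zaremba's $A$, and the lower bound $L(a)$ (which grows linearly, giving the paper's explicit $a_i<4C$) to recover a bound on the partial quotients from $C$. The subtleties you flag about the finite-continued-fraction setting and the normalization are exactly the ones the paper disposes of via its modified Proposition \ref{prop:BoxesAreSemiConvergents} and Lemma \ref{lem:FindTorusPointSet}, so no further work is needed.
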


% \begin{thm}\label{thm:Zaremba eq disp Bound}
% Conjecture \ref{con:Zaremba} is equivalent to Conjecture \ref{con:TorusDispBound}.
% \end{thm}

\begin{proof}
 Assume Zaremba's Conjecture is true with some $A\in\N$. For all $n\in\N$ then there is a $p\in\N$ such that $p/n=[0,a_1,\dots,a_k,1]$ with $a_i\leq A$. By the upper bound of Theorem \ref{thm:Genreal upper/lower bounds} we get that the biggest maximal empty box of the lattice $\Lambda$ generated by 
\[
\begin{pmatrix}
1     &   -\frac{n}{p}\\
1  &   0
\end{pmatrix},
\]
has area smaller than 
\[
C:=\frac{A}{4}+\frac{3}{2}+\frac{1}{A+2}.
\]
Thus, by Lemma \ref{lem:FindTorusPointSet}, we know that the corresponding rank-$1$ lattice $P_n$ fulfills 
\[
\disp_\mathbb{T}(P_n)<\frac{C}{n}.
\]

Now assume that there exists some absolute constant $C$ such that for every $n\in\N$ there exists some $p\in\N$, co-prime to $n$, for which the rank-$1$ lattice $P_n=\{(\{kp/n\},k/n),0\leq k<n\}\subseteq[0,1]^2$ satisfies
\[
\frac{C}{n}>\disp_\mathbb{T}(P_n)=\frac{1}{n}\max_{0<i<k}\max_{0\leq j<a_i}\frac{\vol(B_{A_i+j})}{\det(\Lambda)},
%\big(=\frac{p}{n^2}\max_{0<i<k}\max_{0\leq j<a_i}\vol(B_{A_i+j})\big)
\]
where the lattice $\Lambda$ is defined as in the statement of Lemma \ref{lem:FindTorusPointSet}.
Applying the lower bound of Theorem \ref{thm:Genreal upper/lower bounds} we see that the continued fraction coefficients of $p/n=[a_1,\dots,a_k,1]$ must have
\[
C>\frac{a_i}{4}+1+\frac{1}{a_i}-\frac{1}{4a_i}>\frac{a_i}{4},
\]
which gives $a_i<4C$. We conclude that Zaremba's Conjecture holds with constant $A=4C$.
\end{proof}

 %%%%%%%%%%%%%%%%%%%%%%%%%%%%%%%%%%%%%%%%%%%%%%%%%%%%%%%%%%%%%%%%%%%%%%%%%%%%%%%%%%%%%%%%%%
\section{Conclusions and future work}

Because our results rely so heavily on the continued fraction expansion of the generators of a lattice and since most numbers do not have a well-behaved expansion, finding the exact dispersion of a lattice is hard. The easiest to handle are quadratic lattices, where the dispersion depends only on the discriminant of the underlying ring (or of the determinant of the lattice). Fortunately, the most of the interesting lattices are quadratic lattices. After that, the dispersion of lattices whose associated sequence is periodic can be calculated explicitly with the help of a computer and the lattices with lowest dispersion are of this type. Another case we can handle are lattices generated by a number known to have unbounded continued fraction coefficients, in which case the dispersion is infinite. Finally, if we happen to know what the largest continued fraction coefficient of the generators are we can estimate the dispersion of the lattice. 

Theorem \ref{thm:Zaremba eq disp Bound} connects periodic dispersion to a long-standing conjecture. The methods used in \cite{BreneisHinrichs} do not rely on continued fractions to compute the dispersion of an integral lattice, instead they use what they call a splitting argument.  It is possible that dispersion could be connected to other conjectures. 

We would also like to investigate the connection to the Markov and Lagrange spectrum. Interestingly, in \cite{niederreiter1984measure} the author considered the dispersion of 1-dimensional sequences and showed that sequences associated with $\varphi$ and $\sqrt{2}$ give the best possible constants. The author of \cite{OndispersionandMarkovconstants} was able to show that a larger associated Markov constant does not automatically imply larger dispersion, as one might expect by the findings in \cite{niederreiter1984measure}. Furthermore, in \cite{tripathi1993comparison} a bigger family of pairs that share this property. Note that the numbers of best possible dispersion the author found therein do not match the values we found in section \ref{thm:BestLattices}.

Well-known objects in Algebraic number theory seem to produce very good lattices with respect to dispersion. The ring of integers $R$ of a real number field $K$ is associated to the lattice
\[
\Lambda=\{(\sigma_1(\alpha),\dots,\sigma_s(\alpha)):\alpha\in R\},
\]
where $\sigma_1,\dots,\sigma_s$ be the $s$ real embeddings of a real number field. For these lattices the volumes of a maximal empty axis-parallel box $B\subset\Lambda$ take finitely many values. This is because boxes can be transformed by scaling each coordinate by an embedding of a given unit without changing either its volume or how many lattice points it contains. By reducing maximal empty boxes, for lattices with small discriminant in moderately high dimensions it should be possible to develop a relatively efficient algorithm that computes actual dispersion of the lattice.  In particular it would be interesting to verify whether or not the lattices in \cite[Table 1]{kacwin2018numerical} are optimal.

We would like to obtain good bounds for the dispersion of such lattices in terms of the discriminant of the underlying number ring. However, several things in the two-dimensional case do not generalize to higher dimensions. For one, in two-dimensions any box is completely determined by two opposing points, while in higher dimensions you could need up to $d-1$ opposing points. Continued fractions also do not generalize to higher dimensions, so if our methods were to be adapted one would need to work with a higher dimensional generalization of continued fractions. Also, it is not clear whether or not it is possible to translate a maximal empty axis-parallel box so that it is strictly contained in another empty box.

We have used the dispersion package developed by Benjamin Sommer, which will be available at some point later, to estimate the dispersion of the lattices associated with the rings of integers of cubic fields with small discriminant. There is a definite correlation between the discriminant and the dispersion of the lattice, but the authors cannot tell if it is linear as in the quadratic case. The cubic lattice with lowest discriminant is associated to the ring of integers of the splitting field $\Q[2\cos(\frac{\pi}{7})]$ of the polynomial $x^3-x^2-2x+1$, which has generating matrix
\[
2\begin{pmatrix}
\cos(\frac{\pi}{7}) &   \cos(\frac{3\pi}{7}) &   \cos(\frac{5\pi}{7})\\
\cos(\frac{3\pi}{7}) &   \cos(\frac{5\pi}{7}) &   \cos(\frac{\pi}{7})\\
\cos(\frac{5\pi}{7}) &   \cos(\frac{\pi}{7}) &   \cos(\frac{3\pi}{7})
\end{pmatrix}.
\]
The only possible values for the normalized volume of maximal empty axis-parallel boxes amidst this lattice are roughly $2.74224$, $2.92038$, and $3.38423$. This means that, for this lattice, the normalized dispersion is $3.38423\dots$, which we conjecture to be the best possible dispersion for three-dimensional lattices.

The optimal two-dimensional and the conjectured optimal three-dimensional lattice are associate to the ring of integers of the real number field $\Q[2\cos(\pi/5)]$ and $\Q[2\cos(\pi/7)]$. These are special cases of the real number fields $\Q[2\cos(\pi/(2d+1)]$ where $2d+1$ is prime and $d$ is the dimension of the lattice. We have also verified that, for small $d$, this field has the smallest discriminant when $d$ is also prime (a prime $d$ such that $2d+1$ is also prime is called a Sophie Germain prime).  The authors conjecture that the dispersion of these lattices will have the optimal dependence on $d$. It may be possible to compute the dispersion of these fields directly for all $d$.

\printbibliography
\clearpage
\appendix
\section{Proof of Proposition \ref{thm:EstimateOnAi}}\label{sec:Proof of EstimateOnAi}

% \begin{defi}\jaspar{I have defined the fundamental unit to be greater than 1.}\thomas{that is fine. This way it becomes unique (which is not too important for me). I will check in other parts to write it in a way that is according to this definition.}
% The \emph{fundamental unit} $\varepsilon\in\Z[\delta]$ is the unique unit that satisfies
% \begin{enumerate}
%     \item Every unit in $\Z[\delta]$ is of the form $\pm\varepsilon^k$ for some $k\in\Z$,
%     \item $\varepsilon>1$.
% \end{enumerate}
% \end{defi}

% The following proposition is well-known in the case of the ring of integers of a quadratic field and is proved in Appendix B.

\begin{lemma}\label{lem:OnlyPeriodUnit}
For $\Delta = [\overline{a_0,a_1,\dots, a_{l-1}}]$ being a purely periodic quadratic integers, corresponding to the polynomial $x^2-tx+n$, we have $(-1)^i N(p_i - q_i \Delta)=1 $ if and only if $i\in l \Z$. In addition, if $\Delta$ is a purely periodic quadratic number the if part still holds.
\end{lemma}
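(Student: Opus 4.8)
The plan is to track the single algebraic integer $\theta_i:=p_i-q_i\Delta$ together with its conjugate $\overline\theta_i=p_i-q_i\overline\Delta$, and to show that $G_i:=(-1)^iN(\theta_i)=(-1)^i\theta_i\overline\theta_i$ is a period-$l$ sequence of positive numbers equal to $1$ precisely at the multiples of $l$. First I would rewrite Lemma \ref{lem: Delta i} as $\theta_{i-1}/\theta_i=-\Delta_i$ and $\overline\theta_{i-1}/\overline\theta_i=-\widetilde\Delta_i=-\overline{\Delta_i}$, the last equality holding because pure periodicity forces $\widetilde\Delta_i=\overline{\Delta_i}$. Telescoping over a full period and using that the two-sided sequence $(a_k)$ is $l$-periodic, the factor $\prod_{k=i+1}^{i+l}\Delta_k^{-1}$ is independent of $i$, which yields the multiplicative law $\theta_{i+l}=\theta_l\,\theta_i$ for every $i\in\Z$, and in particular $\theta_{ml}=\theta_l^{\,m}$.

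Next I would evaluate $N(\theta_l)$. Setting $i=l$ in Lemma \ref{lem: Delta i} and using $\Delta_l=\Delta_0=\Delta$ gives $q_l\Delta^2+(q_{l-1}-p_l)\Delta-p_{l-1}=0$. Since $\overline\Delta$ is the second root of this rational quadratic, Vieta's formulas supply $\Delta+\overline\Delta$ and $\Delta\overline\Delta$, and substituting them into $N(\theta_l)=p_l^2-p_lq_l(\Delta+\overline\Delta)+q_l^2\Delta\overline\Delta$ collapses the expression to $p_lq_{l-1}-p_{l-1}q_l=(-1)^l$ by \eqref{eq:Convergents Fact 2}. Hence $N(\theta_l)=(-1)^l$. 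Together with $\theta_{i+l}=\theta_l\theta_i$ this gives $G_{i+l}=G_i$, and since $\theta_0=1$ forces $G_0=1$ we conclude $G_{ml}=1$ for all $m$. This argument uses only that $\Delta$ is reduced, so it already proves the ``if'' part for every purely periodic quadratic \emph{number}, which is exactly the added generality claimed.

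For the converse I would invoke integrality. Writing $\Delta=\tfrac{t+\sqrt D}{2}$ with $t=a_0\in\Z$, $n=N(\Delta)\in\Z$, and $D=t^2-4n$, I run the standard complete-quotient recursion $\Delta_k=\tfrac{P_k+\sqrt D}{Q_k}$; the identity $Q_{k-1}Q_k=D-P_k^2$ gives $N(\Delta_k)=-Q_{k-1}/Q_k$. Telescoping the elementary relation $N(\theta_i)=\prod_{k=1}^{i}N(\Delta_k)^{-1}$ then yields the clean formula $G_i=Q_i/Q_0$ with $Q_0=2$, which is a positive integer equal to $1$ exactly when $Q_i=2$. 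The decisive step is that among \emph{reduced} quadratic irrationals the constraint $Q_i=2$ confines $P_i$ to the interval $(\sqrt D-2,\sqrt D)$, which contains exactly two consecutive integers, while the divisibility requirement $2\mid D-P_i^2$, i.e.\ $P_i\equiv D\bmod 2$, selects exactly one of them. Thus $\Delta_i$ is uniquely pinned down, so $\Delta_i=\Delta_0$, and minimality of the period gives $l\mid i$; periodicity of $G$ then covers all $i\in\Z$.

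The main obstacle is precisely this uniqueness step, and it is where the quadratic-integer hypothesis is indispensable. For a general purely periodic quadratic number the denominator $Q_0$ can be large, and $Q_i=Q_0$ may then be solved by several distinct reduced $\Delta_i$ because of the palindromic symmetry of the period; in that case $G_i=1$ need not force $\Delta_i=\Delta_0$, which is exactly why only the ``if'' half survives. I would therefore phrase the size-and-parity bookkeeping so that it leans only on $Q_0\in\{1,2\}$ (true for quadratic integers), keep $D=t^2-4n$ fixed throughout so the recursion stays integral, and state explicitly that the telescoping in the first two paragraphs is the portion that remains valid without integrality.
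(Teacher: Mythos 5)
Your proof is correct, and it reaches the lemma by a genuinely different route than the paper. The paper's proof consists of a single rationalization: using Lemma~\ref{lem: Delta i} it writes $\Delta_i=\frac{m_i+\Delta}{(-1)^iN(p_i-q_i\Delta)}$ with $m_i=(-1)^{i+1}(p_ip_{i-1}+nq_iq_{i-1}-tp_{i-1}q_i)$, and then reads off both directions from this one identity --- the \emph{if} part by comparing coefficients in the $\Q$-basis $\{1,\Delta\}$ when $\Delta_i=\Delta$, and the \emph{only if} part by noting that for a quadratic integer $m_i\in\Z$, so $(-1)^iN=1$ gives $\Delta_i=m_i+\Delta$ and reducedness of both sides forces $m_i=0$. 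You instead get the \emph{if} part from the multiplicative law $\theta_{i+l}=\theta_l\theta_i$ together with $N(\theta_l)=(-1)^l$ (via Vieta and \eqref{eq:Convergents Fact 2}); this is essentially the content the paper proves separately in the next lemma, namely \eqref{eq:PeriodicMult} and \eqref{eq:PeriodicNormEq}, so your organization would allow the two lemmas to be merged, and it makes transparent why the \emph{if} half survives for arbitrary purely periodic quadratic numbers. For the converse you import the classical complete-quotient bookkeeping $\Delta_k=(P_k+\sqrt D)/Q_k$ and the formula $(-1)^iN(\theta_i)=Q_i/Q_0$, then pin down the unique reduced form with $Q_i=2$ by the length-two interval $(\sqrt D-2,\sqrt D)$ plus the parity constraint $P_i\equiv D\bmod 2$. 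The two converses are the same rigidity statement in different clothing --- your $(P_i-P_0)/2$ is exactly the paper's $m_i$, and both arguments end with ``two reduced numbers differing by an integer coincide'' --- but the paper's version avoids setting up the $(P,Q)$ recursion and its integrality invariant, while yours makes visible exactly where $Q_0\in\{1,2\}$ (hence the quadratic-integer hypothesis) enters. The only points worth spelling out in a final write-up are that $Q_k>0$ because every complete quotient of a purely periodic expansion is reduced, that $Q_0=2$ divides $D-P_0^2=-4n$ so the recursion stays integral, and that $\Delta_i=\Delta_0$ implies $l\mid i$ by minimality of the period.
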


\begin{proof}
Let $\Delta$ be a purely periodic quadratic number. We begin by rewriting $\Delta_i$ with the help of Lemma \ref{lem: Delta i} as
\begin{align*}
    \Delta_i& =\frac{-\Delta q_{i-1} + p_{i-1}}{\Delta q_i - p_i} = \frac{-\Delta q_{i-1} + p_{i-1}}{\Delta q_i - p_i} \cdot \frac{\overline{\Delta} q_i - p_i}{\overline{\Delta} q_i - p_i}  \\
    &= \frac{(-p_ip_{i-1}-\Delta\overline{\Delta}q_iq_{i-1})+p_{i-1}q_i\overline{\Delta}+p_{i}q_{i-1}\Delta}{N(p_i - \Delta q_i)} + \frac{p_{i-1}q_i\Delta-p_{i-1}q_i\Delta}{N(p_i - \Delta q_i)}\\
    &=\frac{(-p_ip_{i-1}-nq_iq_{i-1}+tp_{i-1}q_i)+(-p_{i-1}q_i+p_{i}q_{i-1})\Delta}{N(p_i - \Delta q_i)}\\
    &=\frac{(-p_ip_{i-1}-nq_iq_{i-1}+tp_{i-1}q_i)+(-1)^i\Delta}{N(p_i - \Delta q_i)}\\
    &=\frac{(-1)^{i+1}(p_ip_{i-1}+nq_iq_{i-1}-tp_{i-1}q_i)+\Delta}{(-1)^i N(p_i - \Delta q_i)}\\
    &=\frac{m_i+\Delta}{(-1)^i N(p_i - \Delta q_i)}.
\end{align*}
If $i\in l \Z$, then it is $\Delta_i=\Delta$ and thus
\[
\frac{m_i+\Delta}{(-1)^i N(p_i - \Delta q_i)}=\Delta,
\]
or equivalently
\[
m_i+\Delta=(-1)^i N(p_i - \Delta q_i)\Delta.
\]
Comparing coefficients it follows that $m_i=0$ and $(-1)^i N(p_i - \Delta q_i)=1$.

Assume in addition that $\Delta$ is a quadratic integer, then we certainly have $m_i=(-1)^{i+1}(p_ip_{i-1}+nq_iq_{i-1}-tp_{i-1}q_i)\in \Z$.
Assume now that $(-1)^i N(p_i - q_i \Delta)=1 $, meaning $\Delta_i = m_i + \Delta$. Both, $\Delta$ and $\Delta_i$ are purely periodic which implies $-1<\overline{\Delta}<0$ and $-1<m_i+\overline{\Delta}<0$. Since $m_i\in\Z$ this can only be the case if $m_i=0$, showing $\Delta_i=\Delta$ which is only the case if $i\in\l\Z$.
\end{proof}

% and therefore
% \[
% |N(p_{i}-q_{i}\Delta)|=|N(p_{i+l}-q_{i+l}\Delta)|.
% \]

\begin{lemma}
Let $\Delta = [\overline{a_0,a_1,\dots, a_{l-1}}]$ be a purely periodic quadratic number. Then for all $i\in\Z$
\begin{align}\label{eq:PeriodicNormEq}
|N(p_{i}-q_{i}\Delta)|=|N(p_{i+kl}-q_{i+kl}\Delta)|.
\end{align}
\end{lemma}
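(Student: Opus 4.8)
The plan is to reduce the claim to the multiplicativity of the norm, the periodicity relation $\Delta_{i+l}=\Delta_i$, and a single application of Lemma \ref{lem:OnlyPeriodUnit} to normalize one factor. Throughout write $r_i:=N(p_i-q_i\Delta)$; note that $r_i\neq 0$ (if $p_i-q_i\Delta=0$ then either $\Delta=p_i/q_i\in\Q$, contradicting irrationality, or $q_i=0$ and $p_i\neq 0$), and that $r_0=N(p_0-q_0\Delta)=N(1)=1$.

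First I would extract a recurrence for the $r_i$ directly from Lemma \ref{lem: Delta i}, which gives $-\Delta_i=\dfrac{p_{i-1}-q_{i-1}\Delta}{p_i-q_i\Delta}$ for every $i\in\Z$. Since the norm is multiplicative on $\Q[\sqrt d]$ and $N(-\Delta_i)=N(\Delta_i)$, applying $N$ to both sides yields the key identity $N(\Delta_i)=r_{i-1}/r_i$, valid for all $i$.

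Next I would invoke periodicity. Because the sequence $(a_r)_{r\in\Z}$ has period $l$, the tail expansions coincide, $\Delta_{i+l}=\Delta_i$, and hence $N(\Delta_{i+l})=N(\Delta_i)$. Substituting the identity above gives $r_{i+l-1}/r_{i+l}=r_{i-1}/r_i$, which rearranges to $r_{i+l}/r_i=r_{i+l-1}/r_{i-1}$. Thus the ratio $g_i:=r_{i+l}/r_i$ satisfies $g_i=g_{i-1}$ for all $i$, so it is independent of $i$; its common value is $g_0=r_l/r_0=r_l=N(p_l-q_l\Delta)$. Consequently $r_{i+l}=N(p_l-q_l\Delta)\,r_i$ for every $i\in\Z$.

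It then remains only to show $|N(p_l-q_l\Delta)|=1$. This is precisely the ``if part'' of Lemma \ref{lem:OnlyPeriodUnit} applied with index $l\in l\Z$, which is asserted to hold even when $\Delta$ is merely a purely periodic quadratic number, giving $(-1)^l N(p_l-q_l\Delta)=1$. Taking absolute values in $r_{i+l}=N(p_l-q_l\Delta)\,r_i$ yields $|r_{i+l}|=|r_i|$, and iterating this relation (and inverting the unit factor for $k<0$) gives $|r_{i+kl}|=|r_i|$ for all $k\in\Z$, which is the stated equality \eqref{eq:PeriodicNormEq}. The only delicate points are the nonvanishing of the $r_i$ and the shift argument establishing that $g_i$ is constant; both are straightforward once the identity $N(\Delta_i)=r_{i-1}/r_i$ is in hand, so I do not expect a serious obstacle beyond bookkeeping on the indices. (An equivalent route is to observe that $\Delta_l=\Delta$ forces $(1,-\Delta)$ to be a left eigenvector of the matrix $\left(\begin{smallmatrix}p_l&p_{l-1}\\ q_l&q_{l-1}\end{smallmatrix}\right)$ with eigenvalue $p_l-q_l\Delta$, yielding the multiplicative relation $p_{i+l}-q_{i+l}\Delta=(p_l-q_l\Delta)(p_i-q_i\Delta)$ and hence the same conclusion after taking norms; I find the norm-ratio argument cleaner since it sidesteps handling negative indices in the matrix product.)
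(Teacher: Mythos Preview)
Your proof is correct, and it takes a genuinely different route from the paper's. The paper establishes the multiplicative identity $(p_i-q_i\Delta)(p_l-q_l\Delta)=p_{i+l}-q_{i+l}\Delta$ by a direct induction on $i$, using the recurrence $p_{i+1}=a_ip_i+p_{i-1}$ together with the periodicity $a_{i+l}=a_i$; a companion identity $(p_i-q_i\Delta)(p_l-q_l\overline\Delta)=(-1)^l(p_{i-l}-q_{i-l}\Delta)$ then handles negative shifts, and specializing it at $i=l$ yields $N(p_l-q_l\Delta)=(-1)^l$ without citing Lemma~\ref{lem:OnlyPeriodUnit}. Your argument instead squeezes the same conclusion out of Lemma~\ref{lem: Delta i}: from $-\Delta_i=(p_{i-1}-q_{i-1}\Delta)/(p_i-q_i\Delta)$ you read off $N(\Delta_i)=r_{i-1}/r_i$, periodicity $\Delta_{i+l}=\Delta_i$ forces the ratio $r_{i+l}/r_i$ to be constant, and Lemma~\ref{lem:OnlyPeriodUnit} pins that constant to $\pm 1$. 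Your approach is more modular and reuses machinery already in place; the paper's is more self-contained and, as a byproduct, gives an independent proof that $p_l-q_l\Delta$ is a unit. The eigenvector route you sketch in your parenthetical is precisely the paper's multiplicative identity in matrix form, so you have in fact found both arguments.
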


\begin{proof}
This follows directly from
\begin{align}
(p_{i}-q_{i}\Delta)(p_{l}-q_{l}\Delta) &= p_{i+l}-q_{i+l}\Delta \quad \text{and}\label{eq:PeriodicMult}\\
(p_{i}-q_{i}\Delta)(p_{l}-q_{l}\overline{\Delta}) &= (-1)^l(p_{i-l}-q_{i-l}\Delta).\label{eq:PeriodicMultBar}
\end{align}
Equality \eqref{eq:PeriodicMult} is obviously true for $i=0$. Noting that $a_{i}=a_{i+l}$ and calculating
\begin{align*}
   &\phantom{=} (p_{i+1}-q_{i+1}\Delta)(p_{l}-q_{l}\Delta) \\
   &= (a_{i}p_{i}+p_{i-1}-(a_{i}q_{i}+q_{i-1})\Delta)(p_{l}-q_{l}\Delta) \\
   &= (a_{i}(p_{i}-q_{i}\Delta)+(p_{i-1}-q_{i-1}\Delta))(p_{l}-q_{l}\Delta) \\
   &= a_{i+l}(p_{i+l}-q_{i+l}\Delta)+p_{i+l-1}-q_{i+l-1}\Delta \\
   &= (a_{i+l}p_{i+l}+p_{i+l-1})-(a_{i+l}q_{i+l}+q_{i+l-1})\Delta \\
   &= p_{i+1+l}-q_{i+1+l}\Delta,
\end{align*}
we see by induction that \eqref{eq:PeriodicMult} holds for $i\in \N_0$
Equality (\ref{eq:PeriodicMultBar}) follows similarly with the help of the identity 
\begin{align}\label{eq:Conjugate}
   (p_{i}-q_{i}\overline{\Delta}) & = (\overline{p}_{-i}-\overline{q}_{-i}\overline{\Delta}) = (-1)^i (p_{-i} - a_0 q_{-i} + q_{-i} (a_0 -\Delta))= (-1)^i (p_{-i} - q_{-i} \Delta).
\end{align}
So, we have that
\[
(p_{l}-q_{l}\Delta)^k=(p_{kl}-q_{kl}\Delta)
\]
and together with equation (\ref{eq:PeriodicMultBar}) arrive at the assertion.
\end{proof}

\begin{lemma}\label{lem:NumberThoeryEstimate1}
For $i\in\Z$, let $p_i/q_i$ be the $i$-th convergent of the purely periodic number $\Delta$. Then
\[
\frac{\Delta-\overline{\Delta}}{a_{i}+2}\leq|N(p_{i}-q_{i}\Delta)|\leq \frac{\Delta-\overline{\Delta}}{a_{i}}
\]
\end{lemma}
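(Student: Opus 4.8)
The plan is to collapse the norm $N(p_i-q_i\Delta)$ onto the single quantity $\Delta_i-\overline\Delta_i$, for which the estimate $a_i<\Delta_i-\overline\Delta_i<a_i+2$ is immediate. The starting point is the identity established in the course of proving Lemma \ref{lem:OnlyPeriodUnit}: for a purely periodic quadratic number $\Delta$ with minimal polynomial $x^2-tx+n$,
\[
\Delta_i=\frac{m_i+\Delta}{(-1)^i N(p_i-q_i\Delta)},\qquad m_i=(-1)^{i+1}(p_ip_{i-1}+nq_iq_{i-1}-tp_{i-1}q_i).
\]
The derivation there only uses Lemma \ref{lem: Delta i} together with the convergent recurrences, so it is valid for every $i\in\Z$ and does not require $\Delta$ to be a quadratic integer; in particular $m_i\in\Q$, while $t,n\in\Q$ and $N(p_i-q_i\Delta)\in\Q$.

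The key step is to apply the conjugation automorphism of $\Q[\sqrt{d}]$ to this identity. Since $m_i$, $t$, $n$, and the rational number $N(p_i-q_i\Delta)$ are all fixed by conjugation while $\Delta\mapsto\overline\Delta$ and $\Delta_i\mapsto\overline\Delta_i$, I obtain the parallel identity $\overline\Delta_i=(m_i+\overline\Delta)/((-1)^i N(p_i-q_i\Delta))$. Subtracting the two expressions, the auxiliary constant $m_i$ cancels and I am left with
\[
\Delta_i-\overline\Delta_i=\frac{\Delta-\overline\Delta}{(-1)^i N(p_i-q_i\Delta)},
\qquad\text{equivalently}\qquad
(-1)^i N(p_i-q_i\Delta)=\frac{\Delta-\overline\Delta}{\Delta_i-\overline\Delta_i}.
\]
Because $\Delta_i=[\overline{a_i,\dots,a_{i+l-1}}]$ is again purely periodic (a cyclic shift of the period of $\Delta$), Galois' characterization of purely periodic expansions gives $\Delta_i>1$ and $-1<\overline\Delta_i<0$; hence both sides above are positive and $|N(p_i-q_i\Delta)|=(\Delta-\overline\Delta)/(\Delta_i-\overline\Delta_i)$.

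It then remains only to bound the denominator. Writing $\Delta_i=a_i+1/\Delta_{i+1}$ with $\Delta_{i+1}>1$ shows $a_i<\Delta_i<a_i+1$, and the purely periodic condition gives $0<-\overline\Delta_i<1$; adding these yields $a_i<\Delta_i-\overline\Delta_i<a_i+2$. Substituting into the displayed formula produces the claimed inequalities, in fact with strict inequalities. The only thing that needs care is the first step: recognizing that conjugating the $\Delta_i$-identity from Lemma \ref{lem:OnlyPeriodUnit} and subtracting eliminates the unknown $m_i$ and reduces the norm to $\Delta-\overline\Delta$ over $\Delta_i-\overline\Delta_i$. After that the estimate is a one-line consequence of the fact that $\Delta_i$ is purely periodic.
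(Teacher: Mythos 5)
Your proof is correct, and it takes a genuinely different route from the paper's. The paper proves the lemma by combining the shift-invariance of the norm along the period, $|N(p_{i}-q_{i}\Delta)|=|N(p_{i+kl}-q_{i+kl}\Delta)|$, with the classical approximation bounds $\tfrac{1}{(a_i+2)|q_i|}<|p_i-q_i\Delta|<\tfrac{1}{a_i|q_i|}$ from \eqref{eq:Convergents Fact 4}; the unwanted term $(p_{i+kl}-q_{i+kl}\Delta)^2$ is then killed by letting $k\to\infty$, so the bounds emerge only in the limit. You instead conjugate the identity $\Delta_i=(m_i+\Delta)/\bigl((-1)^iN(p_i-q_i\Delta)\bigr)$ from Lemma \ref{lem:OnlyPeriodUnit} (legitimate, since $m_i$, $t$, $n$, and the norm are rational and hence Galois-fixed) and subtract, obtaining the exact closed form
\[
|N(p_{i}-q_{i}\Delta)|=\frac{\Delta-\overline{\Delta}}{\Delta_i-\overline{\Delta_i}},
\]
after which the estimate follows in one line from $a_i<\Delta_i<a_i+1$ and $-1<\overline{\Delta_i}<0$ (Galois' characterization applied to the cyclic shift $\Delta_i$). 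I verified the identity numerically, e.g.\ for $\Delta=1+\sqrt{3}=[\overline{2,1}]$ and $i=1$ it gives $|N(2-\Delta)|=2\sqrt{3}/\sqrt{3}=2$, as it should. Your argument is shorter, avoids both the auxiliary periodicity lemma \eqref{eq:PeriodicNormEq} and the limiting argument, yields strict inequalities, and produces an exact formula for the norm that makes transparent exactly how close each bound can be approached; the paper's route, on the other hand, leans only on standard convergent inequalities and so requires less bookkeeping with the algebraic structure of $\Delta_i$.
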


\begin{proof}

We begin by calculating
\begin{align*}
|N(p_{i}-q_{i}\Delta)|&=|(p_{i}-q_{i}\Delta)(p_{i}-q_{i}\overline{\Delta})| \\
&=|(p_{i}-q_{i}\Delta)(p_{i}-q_{i}\big(\overline{\Delta}-\Delta+\Delta)\big)|\\
&=|(p_{i}-q_{i}\Delta)((p_{i}-q_{i}\Delta)-q_{i}(\overline{\Delta}-\Delta)|\\
&=|(p_{i}-q_{i}\Delta)^2-q_{i}(p_{i}-q_{i}\Delta)(\overline{\Delta}-\Delta)|\\
&=|(p_{i}-q_{i}\Delta)^2+q_{i}(p_{i}-q_{i}\Delta)(\Delta-\overline{\Delta})|
\end{align*}
and in the same way
\begin{align*}
|N(p_{i}-q_{i}\Delta)|&= |(p_{i}-q_{i}\overline{\Delta})^2+q_{i}(p_{i}-q_{i}\overline{\Delta})(\overline{\Delta}-\Delta)|,
\end{align*}
we need both equalities to be able to deal with both positive and negative values of $i$.

With the previous lemma and \eqref{eq:Convergents Fact 4}, as well as $a_i=a_{i+kl}$ we get
\[
|N(p_{i}-q_{i}\Delta)|=|N(p_{i+kl}-q_{i+kl}\Delta)|\leq \frac{1}{a_{i}^2q_{i+kl}^2}+\frac{\Delta-\overline{\Delta}}{a_{i}}.
\]
Since this is true for arbitrary large $k\in\Z$, we obtain the upper bound
\[
|N(p_{i}-q_{i}\Delta)| \leq \frac{\Delta-\overline{\Delta}}{a_{i}}.
\]
Using the reverse triangle inequality we get the lower bound
\begin{align*}
|N(p_{i}-q_{i}\Delta)|&=|(p_{i}-q_{i}\Delta)^2+q_{i}(p_{i}-q_{i}\Delta)(\Delta-\overline{\Delta})| \ge |q_{i}(p_{i}-q_{i}\Delta)(\Delta-\overline{\Delta})| - |(p_{i}-q_{i}\Delta)^2|
\end{align*}
and 
\begin{align*}
|N(p_{i}-q_{i}\Delta)|&= |(p_{i}-q_{i}\overline{\Delta})^2+q_{i}(p_{i}-q_{i}\overline{\Delta})(\overline{\Delta}-\Delta)|\ge |q_{i}(p_{i}-q_{i}\overline{\Delta})(\Delta-\overline{\Delta})| - |(p_{i}-q_{i}\overline{\Delta})^2|.
\end{align*}
With this we get
\[
|N(p_{i}-q_{i}\Delta)|=|N(p_{i+kl}-q_{i+kl}\Delta)|\ge \frac{\Delta-\overline{\Delta}}{a_{i}+2} - \frac{\Delta-\overline{\Delta}}{a_{i}^2q_{i+kl}^2},
\]
which, again, holds for arbitrarily large $k\in \mathbb{Z}$ and we arrive at
\[
|N(p_{i}-q_{i}\Delta)|\ge \frac{\Delta-\overline{\Delta}}{a_{i}+2}. \qedhere
\]
\end{proof}
With this result used for purely periodic quadratic integers, we can finally give a proof of Theorem \ref{thm:EstimateOnAi}.
\begin{proof}[Proof of Theorem \ref{thm:EstimateOnAi}]

Assume to the contrary that $a_{i}>\frac{\Delta - \overline\Delta}{2}$.
By the previous lemma we obtain
\[
|N(p_{i}-q_{i}\Delta)|\leq \frac{\Delta-\overline{\Delta}}{a_{i}}
<2.
\]
Since the norm of a quadratic integer has to be an integer value we have $|N(p_{i}-q_{i}\Delta)|=1$, but as seen in Lemma \ref{lem:OnlyPeriodUnit} this can only be the case if $i \in l\Z$. This leads to a contradiction to the choice of $0<i<l$. The rest of the statement follows from $a_0=\Delta+\overline{\Delta}$.
\end{proof}

As corollary we get the assertion about $\sqrt{d}$.

\begin{cor}
The continued fraction expansion of $\sqrt{d}=[a_0,\overline{a_1,\dots,a_{l-1},2a_0}]$ satisfies $a_i\leq a_0$ for all  $0<i<l$.
\end{cor}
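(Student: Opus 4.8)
The plan is to reduce the statement to Theorem \ref{thm:EstimateOnAi} by passing from $\sqrt{d}$ to its associated purely periodic quadratic integer. I would set $\Delta := a_0 + \sqrt{d}$ and claim that $\Delta$ is a purely periodic quadratic integer whose expansion is $\Delta = [\overline{2a_0, a_1, \dots, a_{l-1}}]$, where $a_1, \dots, a_{l-1}$ are precisely the coefficients occurring in the expansion of $\sqrt{d}$.

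First I would check that $\Delta$ is a quadratic integer: since $(\Delta - a_0)^2 = d$, it is a root of the monic integer polynomial $x^2 - 2a_0 x + (a_0^2 - d)$. Next, because $a_0 = \lfloor \sqrt{d}\rfloor$ and $\sqrt{d}$ is irrational, one has $a_0 < \sqrt{d} < a_0 + 1$, so $\overline{\Delta} = a_0 - \sqrt{d} \in (-1,0)$ while $\Delta = a_0 + \sqrt{d} > 1$; by the well-known characterization of purely periodic numbers recalled just before Lemma \ref{lem:UPPG}, $\Delta$ is purely periodic.

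To pin down the expansion I would start from $\sqrt{d} = [a_0, \overline{a_1, \dots, a_{l-1}, 2a_0}]$, which gives $\sqrt{d} - a_0 = [0, \overline{a_1, \dots, a_{l-1}, 2a_0}]$. Then $\Delta = 2a_0 + (\sqrt{d} - a_0)$ has integer part $2a_0$ and the same fractional part as $\sqrt{d}$, so $\Delta = [2a_0, \overline{a_1, \dots, a_{l-1}, 2a_0}]$. Since the periodic block closes with $2a_0$, this coincides with the purely periodic expansion $[\overline{2a_0, a_1, \dots, a_{l-1}}]$ of period length $l$, and in particular its $i$-th coefficient equals $a_i$ for every $0 < i < l$.

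Finally I would apply Theorem \ref{thm:EstimateOnAi} to $\Delta$. Its leading coefficient is $2a_0$ and $\Delta - \overline{\Delta} = (a_0 + \sqrt{d}) - (a_0 - \sqrt{d}) = 2\sqrt{d}$, so the theorem yields, for all $0 < i < l$,
\[
a_i \le \left\lfloor \frac{\Delta - \overline{\Delta}}{2}\right\rfloor = \lfloor \sqrt{d}\rfloor = a_0,
\]
which is exactly the assertion. There is essentially no hard step: the only thing to get right is the bookkeeping that turns $\sqrt{d}$ into the purely periodic $a_0 + \sqrt{d}$ without disturbing the intermediate coefficients, together with the identity $\Delta - \overline{\Delta} = 2\sqrt{d}$. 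All the arithmetic content is carried by Theorem \ref{thm:EstimateOnAi}.
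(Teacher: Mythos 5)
Your proposal is correct and follows exactly the paper's own route: pass to the purely periodic quadratic integer $\lfloor\sqrt{d}\rfloor+\sqrt{d}=[\overline{2a_0,a_1,\dots,a_{l-1}}]$ and invoke Theorem \ref{thm:EstimateOnAi}. You merely spell out the verification that this number is a purely periodic quadratic integer and use the bound $\lfloor(\Delta-\overline{\Delta})/2\rfloor=\lfloor\sqrt{d}\rfloor=a_0$, which coincides with the paper's $\lceil 2a_0/2\rceil=a_0$.
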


\begin{proof}
It is well known that the continued fraction expansion of $\sqrt{d}$ is given by $[a_0,\overline{a_1,\dots,a_{l-1},2a_0}]$. The remainder follows directly from $\lfloor\sqrt{d} \rfloor + \sqrt{d} = [2a_0,\overline{a_1,\dots,a_{l-1},2a_0}]$ is a purely periodic quadratic integer.
\end{proof}

We want to state a conjecture on all coefficients of such $\Delta$.

\begin{conjecture}
Let $\Delta=[\overline{a_0,a_1,\dots,a_{l-1}}]$ be a purely periodic quadratic integer and let $\mathcal{C}(\Delta)=\{a_0,a_1,\dots,a_{l-1}\}$.
Then the $n$-th largest element $w_n$ of $\mathcal{C}(\Delta)$ satisfies
\[
w_n\leq\frac{a_0}{n}.
\]
\end{conjecture}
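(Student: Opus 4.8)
The plan is to generalise the norm-theoretic argument that proves Theorem~\ref{thm:EstimateOnAi} (the case $n=2$) by translating ``$a_j$ is large'' into ``the integer $|N(p_j-q_j\Delta)|$ is small'' and then bounding how many coefficients in a single period can be large at once. First I would record the dictionary. By Theorem~\ref{thm:EstimateOnAi} the coefficient $a_0$ is the unique maximum of the period, so $w_1=a_0$ and the case $n=1$ is immediate. For general $n$, Lemma~\ref{lem:NumberThoeryEstimate1} gives, for every $j$,
\[
a_j\le\frac{\Delta-\overline\Delta}{|N(p_j-q_j\Delta)|},
\]
so $a_j>a_0/n$ forces $|N(p_j-q_j\Delta)|<n(\Delta-\overline\Delta)/a_0$. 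Since $a_0=\Delta+\overline\Delta$ and $-1<\overline\Delta<0$ we have $a_0<\Delta-\overline\Delta<a_0+2$, so this threshold lies in the interval $(n,\,n+2n/a_0)$, i.e.\ just above $n$.

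The conjecture is equivalent to the assertion that at most $n-1$ indices $j$ in one period satisfy $a_j>a_0/n$. By the bound above each such index has $|N(p_j-q_j\Delta)|$ below a threshold only slightly larger than $n$. Lemma~\ref{lem:OnlyPeriodUnit} already disposes of the smallest norm, since $|N(p_j-q_j\Delta)|=1$ occurs only for $j\equiv 0 \bmod l$; the remaining content is to control how many indices in a period can realise each of the small norms $2,3,\dots$ up to the threshold. Geometrically the elements $\xi_j=p_j-q_j\Delta$ are the successive minima of the order $R=\Z[\Delta]$ embedded in $\R^2$ by $\alpha\mapsto(\alpha,\overline\alpha)$, and the numbers $|N(\xi_j)|$ are essentially the norms of the reduced ideals making up the principal cycle. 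Thus I would reformulate the target as a counting statement: the principal cycle contains at most $n-1$ reduced ideals of norm below the threshold, which I would attack using the palindromic symmetry of the period $(a_1,\dots,a_{l-1})$ together with a count of the reduced forms $(P+\sqrt{\disc(R)})/Q$ that can produce a given small denominator $Q$.

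The hard part will be precisely this counting of small-norm minima, and the difficulty is already visible at $n=2$. There Theorem~\ref{thm:EstimateOnAi} only yields $w_2\le\lfloor(\Delta-\overline\Delta)/2\rfloor$, whereas the conjecture demands the strictly smaller bound $a_0/2$. When $a_0$ is even the two agree, because an index with $|N(\xi_j)|=2$ then satisfies $a_j\le\lfloor(\Delta-\overline\Delta)/2\rfloor=a_0/2$; but when $a_0$ is odd and $-\overline\Delta>\tfrac12$ (equivalently $a_1=1$) the theorem permits the extra value $a_j=\lceil a_0/2\rceil$, which forces $|N(\xi_j)|=2$. Closing the gap therefore amounts to excluding a reduced principal ideal of norm $2$ sitting at such a borderline complete quotient.

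I expect that ruling out these configurations for all $n$ requires genuinely new input about the distribution of norms along the cycle of reduced ideals, rather than a formal iteration of the ``$|N|\ge 2$'' step: one needs to know not merely that nontrivial minima have norm at least $2$, but how rarely a prescribed small norm can be attained by a reduced principal ideal of $R$. This is the step I anticipate to be the main obstacle, and is presumably the reason the statement is left as a conjecture.
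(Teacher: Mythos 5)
This statement is left as a conjecture in the paper --- no proof of it is given there --- so the only question is whether your proposal actually establishes the result, and it does not: what you have written is a reduction, not a proof. The dictionary you set up is sound. Lemma~\ref{lem:NumberThoeryEstimate1} does give $a_j\le(\Delta-\overline\Delta)/|N(p_j-q_j\Delta)|$, so $a_j>a_0/n$ forces $|N(p_j-q_j\Delta)|<n(\Delta-\overline\Delta)/a_0$, a threshold just above $n$, and Lemma~\ref{lem:OnlyPeriodUnit} disposes of norm $1$. But the entire content of the conjecture is then concentrated in the counting statement you defer --- that few indices in a period can realise a norm below that threshold --- and you give no argument for it beyond the remark that it should follow from properties of reduced ideals in the principal cycle; you concede as much in your final paragraph. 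Two smaller points: your claimed \emph{equivalence} with ``at most $n-1$ indices satisfy $a_j>a_0/n$'' is not one, since $\mathcal{C}(\Delta)$ is a set and $w_n$ is the $n$-th largest \emph{distinct} value, so the index-count version is strictly stronger (it would suffice, but it is not forced by the conjecture); and the threshold $n(\Delta-\overline\Delta)/a_0$ exceeds $n$ by up to $2n/a_0$, so integrality of the norm alone does not even confine the admissible norms to $\{1,\dots,n\}$ without further work.

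The gap is already fatal at $n=2$, as you yourself observe: Theorem~\ref{thm:EstimateOnAi} yields only $a_j\le\lceil a_0/2\rceil$, which for odd $a_0$ strictly exceeds the conjectured bound $a_0/2$, and your proposal contains no mechanism for excluding a reduced principal ideal of norm $2$ at the borderline complete quotient where $a_1=1$. Since the smallest nontrivial case is not closed, the proposal cannot be accepted as a proof of the statement; it is a reasonable research programme whose decisive step --- controlling how often a prescribed small norm occurs among the minima $p_j-q_j\Delta$ within one period --- is precisely the open problem.
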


\clearpage
\section{Figures}
   \begin{figure}[bp!]
\captionsetup[subfigure]{labelformat=empty}
     \centering
     \begin{subfigure}[b]{0.475\textwidth}
         \centering
         \includegraphics{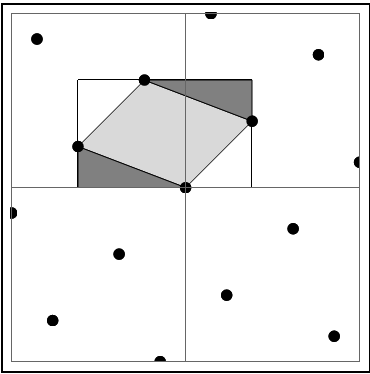}
         \caption{$B_0\square\Lambda_5$}
         \label{fig:Polygon 5}
     \end{subfigure}
     \hspace{.025\textwidth}
     \begin{subfigure}[b]{0.475\textwidth}
         \centering
         \includegraphics{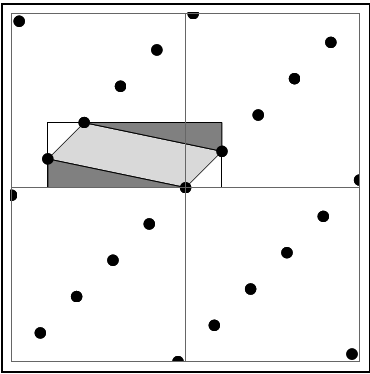}
         \caption{$B_0\square\Lambda_{21}$}
         \label{fig:Polygon 21}
     \end{subfigure}
\caption{The box $B_0$ with the fundamental, the parallelepiped $P$ generated by the basis $\{(1,1),(-\Delta,-\overline\Delta)\}$ shown in light gray, and the lower left corner, $L$, and upper right corner, $R$, shown in dark gray.}
\label{fig:Polygons}
\end{figure}
\begin{figure}[bp!]
\captionsetup[subfigure]{labelformat=empty}
     \centering
     \begin{subfigure}[b]{0.475\textwidth}
         \centering
         \includegraphics{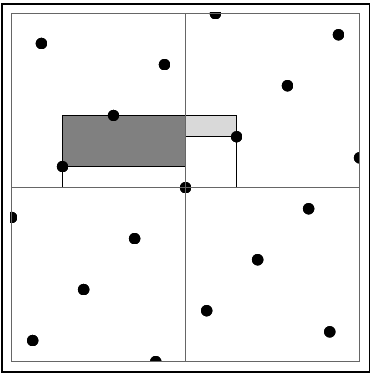}
         \caption{$B_0\square\Lambda_2$}
         %\caption{The white boxes to the left and right of the $y$-axis are $B(\alpha_0)$ and $B(\beta_0)$ respectively, the dark gray box is $C_{0,1}$, and the light gray box is $C_{0,2}$.}
         \label{fig:Rearange 0}
     \end{subfigure}
     \hspace{.025\textwidth}
     \begin{subfigure}[b]{0.457\textwidth}
         \centering
         \includegraphics{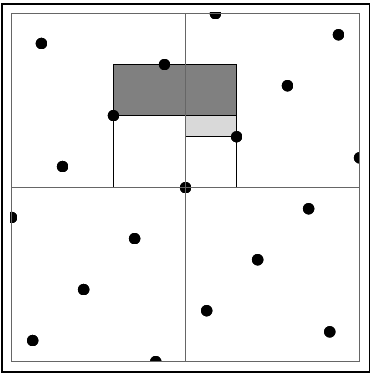}
         \caption{$B_1\square\Lambda_2$}
        % \caption{The white boxes to the left and right of the $y$-axis are $B(\alpha_1)$ and $B(\beta_1)$ respectively, the dark gray box is $D_{1,1}$ and the light gray box is $D_{1,2}$.} 
         \label{fig:Rearange 1}
     \end{subfigure}
\caption{The partitions in the proof of Lemma \ref{prop:VolumeIsNormsPlusConstant}. The white boxes to the left and right of the $y$-axis are $B(\alpha_n)$ and $B(\beta_n)$ respectively. The dark gray boxes inside $B_0$ and $B_1$ are $C_{0,1}$ and $D_{1,1}$ respectively; these regions are translates. The light gray boxes inside $B_0$ and $B_1$ are $C_{0,2}$ and $D_{0,2}$ respectively; these regions are the same. The area of $B(\alpha_n)$ is $|\alpha_n\widetilde\alpha_n|$, the area of $B(\beta_n)$ is $|\beta_n\widetilde\beta_n|$, and the area of gray regions within $B_0$ and $B_1$ is equal to $\Delta-\overline\Delta$.}
\label{fig:Rearange Boxes}
\end{figure}
\begin{figure}[bp!]
\captionsetup[subfigure]{labelformat=empty}
     \centering
     \begin{subfigure}[b]{0.475\textwidth}
         \centering
         \includegraphics{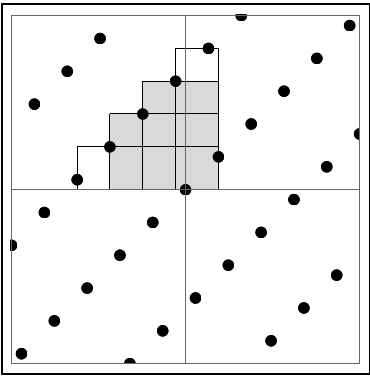}
         \caption{$\Delta = 1+\delta_{13}=[\overline3],\quad (\Delta-\overline\Delta)^2=13$}
         %\caption{The white boxes to the left and right of the $y$-axis are $B(\alpha_0)$ and $B(\beta_0)$ respectively, the dark gray box is $C_{0,1}$, and the light gray box is $C_{0,2}$.}
         \label{fig:Rearange 0}
     \end{subfigure}
     \hspace{.025\textwidth}
     \begin{subfigure}[b]{0.457\textwidth}
         \centering
         \includegraphics{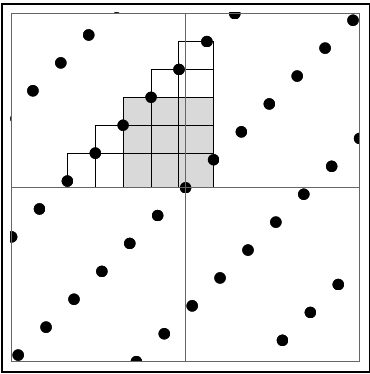}
         \caption{$\Delta=1+2\delta_5=[\overline4] ,\quad (\Delta-\overline\Delta)^2=20$}
        % \caption{The white boxes to the left and right of the $y$-axis are $B(\alpha_1)$ and $B(\beta_1)$ respectively, the dark gray box is $D_{1,1}$ and the light gray box is $D_{1,2}$.} 
         \label{fig:Rearange 1}
     \end{subfigure}
\caption{The boxes corresponding to the first period of $\Delta$ for two different lattices with the largest box(es) in gray. Notice that number of largest boxes depends on the parity of  $(\Delta-\overline\Delta)^2$. See Figure \ref{fig:Cases} for the continued fraction expansion of the generators of the lattices.}
\label{fig:Biggest Boxes}
\end{figure}
\begin{figure}[h]  
     \centering
     {\includegraphics{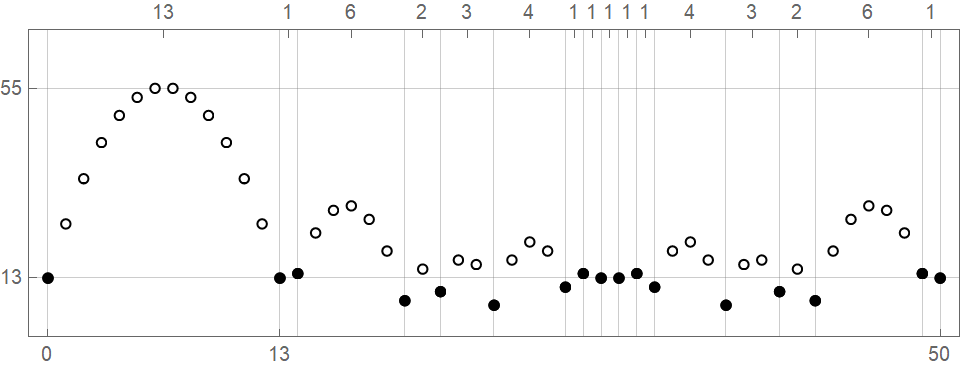}}
     \caption{A plot of $(n,|N(\alpha_n)|+|N(\beta_n)|)$ for $\Delta=6+\delta_{217}=[\overline{13,1,6,2,3,4,1,1,1,1,1,4,3,2,6,1}]$ and $0\leq n\leq 50$. Lemma \ref{prop:VolumeIsNormsPlusConstant} together with the fact that $\Delta$ is purely periodic tells us that the area of any maximal empty axis-parallel box $B$ amidst the points of the lattice generated by $\{(1,1),(-\Delta,-\overline\Delta)\}$ is equal to one of $|N(\alpha_n)|+|N(\beta_n)|+\Delta-\overline\Delta$, where $0\leq n\leq 50$. The height of black points represent the norm part of the volume of the $B_n$ when $n=A_i$, the left and right points bounding these boxes correspond to convergents of $\Delta$. The hollow points appear on parabolas between the black points (one of $\alpha_n$ or $\beta_n$ corresponds to a semiconvergent of $\Delta$). Theorem \ref{thm:Genreal upper/lower bounds} estimates the height of the parabolas based on the gaps between the black points, i.e. the continued fraction coefficients. An upper bound on the length of gaps between the black points is calculated in Theorem \ref{thm:EstimateOnAi}.}
     \label{fig:Discriminant 217}
\end{figure}

\clearpage
\begin{figure}
\captionsetup[subfigure]{labelformat=empty}
     \centering
     \begin{subfigure}[b]{0.32\textwidth}
         \centering
         \includegraphics{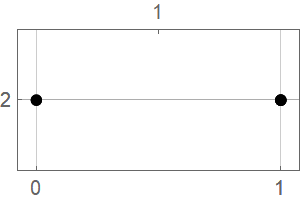}
         \caption{$\Delta=\delta_{5},\quad (\Delta-\overline\Delta)^2=5$}
         \label{fig:Discriminant 5}
     \end{subfigure}
     \begin{subfigure}[b]{0.32\textwidth}
         \centering
         \includegraphics{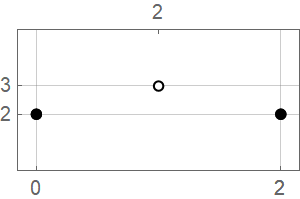}
         \caption{$\Delta=1+\delta_{2},\quad (\Delta-\overline\Delta)^2=8$}
         \label{fig:Discriminant 8}
     \end{subfigure}
     \begin{subfigure}[b]{0.32\textwidth}
         \centering
         \includegraphics{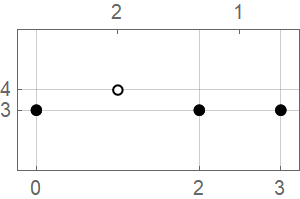}
         \caption{$\Delta=1+\delta_{3},\quad (\Delta-\overline\Delta)^2=12$}
         \label{fig:Discriminant 12}
     \end{subfigure}\\
% \end{figure}\vspace{2mm}
% \begin{figure}[bp!]\captionsetup[subfigure]{labelformat=empty}\centering
       \begin{subfigure}[b]{0.32\textwidth}
         \centering
         \includegraphics{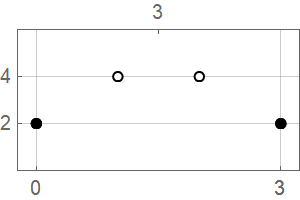}
         \caption{$\Delta=1+\delta_{13},\quad (\Delta-\overline\Delta)^2=13$}
         \label{fig:Discriminant 13}
     \end{subfigure}
     \begin{subfigure}[b]{0.32\textwidth}
         \centering
         \includegraphics{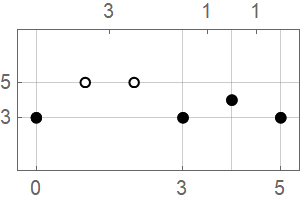}
         \caption{$\Delta=1+\delta_{17},\quad (\Delta-\overline\Delta)^2=17$}
         \label{fig:Discriminant 17}
     \end{subfigure}
     \begin{subfigure}[b]{0.32\textwidth}
         \centering
         \includegraphics{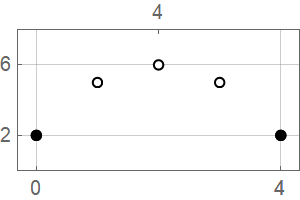}
         \caption{$\Delta=1+2\delta_{5},\quad (\Delta-\overline\Delta)^2=20$}
         \label{fig:Discriminant 20}
     \end{subfigure}
     \caption{The explicit cases of Theorem \ref{thm:MainTheorem} that need to be checked. Notice that the highest dot appears between the first two black dots.}
\label{fig:Cases}
\end{figure}\vspace{2mm}
\begin{table}
\begin{center}
\begin{tabular}{|c|c|c|c|c|}
\hline
    $\max_{i\in\Z}a_i$ & $L(a_i)$ & $\disp(\overline a)$ & $\disp(\overline{a,1})$ & $U(a_i)$ \\
\hline
    2 & 2 & 2.06066 & 2.1547 & 2.25 \\
\hline
    3  & 2 & 2.1094 & 2.30931 & 2.4 \\
    \hline
     4 & 2.25 & 2.34164 & 2.59099  & 2.66667 \\
      \hline
     5 & 2.4 & 2.48556 & 2.78885  & 2.85714 \\
      \hline
     6 & 2.66667 & 2.73925 & 3.06559 & 3.125 \\
      \hline
      7& 2.85714 & 2.92305 & 3.27921 & 3.33333 \\
      \hline
     8 & 3.125 &3.18282  & 3.55155 & 3.6 \\
      \hline
     9 & 3.33333 & 3.38624 & 3.7735 & 3.81818 \\
      \hline
     10 & 3.6 & 3.64757 & 4.04256 & 4.08333 \\
     \hline
      25 & 7.28 & 7.29987  & 7.75931 & 7.77778 \\
      \hline
       50  & 13.52 & 13.53  & 14.0096 & 14.0192 \\
      \hline
       75  & 19.76 & 19.7667 & 20.2532 & 20.2597 \\
      \hline
       100  & 26.01 & 26.015 & 26.5049 & 26.5098 \\
      \hline
        150       & 38.5067 & 38.51 & 39.0033 & 39.0066 \\
      \hline
          200     & 51.005 & 51.0075 & 51.5025 & 51.505 \\
      \hline
          500     & 126.002 & 126.003 & 126.501 & 126.502 \\
      \hline
        1000     & 251.001 & 251.001 & 251.5 & 251.501 \\
      \hline
\end{tabular}
\caption{The lower bounds and upper bounds from Theorem \ref{thm:Genreal upper/lower bounds} and Theorem \ref{thm:Thight Bound} rounded to 5 decimal places.}{\label{tab:dispbyai}}
\end{center}
\end{table}

\end{document}